\theoremstyle{plain}
\newtheorem{theorem}{Theorem}[section]
\newtheorem{lemma}[theorem]{Lemma}
\newtheorem{corollary}[theorem]{Corollary}
\newtheorem{proposition}[theorem]{Proposition}
\newtheorem*{theorem*}{Theorem}
\newtheorem*{gproblem}{The Gr\"unbaum hyperplane mass partition problem}
\newtheorem*{ghrproblem}{The Gr\"unbaum--Hadwiger--Ramos problem}
\newtheorem*{r-conjecture}{The Ramos conjecture}
\newtheorem*{claim*}{Claim}
\theoremstyle{definition}
\newtheorem{definition}[theorem]{Definition}
\newtheorem{example}[theorem]{Example}
\newcommand\RP{\mathbb{R}{\rm P}}
\newcommand{\R}{\mathbb{R}}
\newcommand{\Z}{\mathbb{Z}}
\newcommand{\F}{\mathbb{F}}
\newcommand{\E}{\mathrm{E}}
\newcommand\Sym{\mathfrak{S}}
\newcommand\Wk{\Sym^\pm}
\newcommand\tildephi{\overline{\phi}}
\newcommand\tildepsi{\overline{\psi}}
\newcommand{\im}{\operatorname{im}}
\newcommand{\ind}{\operatorname{Index}}
\begin{document}

\title[Topology of the Gr\"unbaum--Hadwiger--Ramos problem]
{Topology of the Gr\"unbaum--Hadwiger--Ramos hyperplane mass partition problem}

\author[Blagojevi\'c]{Pavle V. M. Blagojevi\'{c}} 
\thanks{The research by Pavle V. M. Blagojevi\'{c} leading to these results has
        received funding from the Leibniz Award of Wolfgang L\"uck granted by DFG.
        Also supported by the grant ON 174008 of the Serbian Ministry of Education and Science.}
\address{Inst. Math., FU Berlin, Arnimallee 2, 14195 Berlin, Germany\hfill\break
\mbox{\hspace{4mm}}Mat. Institut SANU, Knez Mihailova 36, 11001 Beograd, Serbia}
\email{blagojevic@math.fu-berlin.de} 
\author[Frick]{Florian Frick}
\thanks{The research of Florian Frick and of Albert Haase leading to these results has
        received funding from German Science Foundation DFG via the Berlin Mathematical School.}
\address{Dept.\ Math., Cornell University, Ithaca, NY 14853, USA}
\email{ff238@cornell.edu}
\author[Haase]{Albert Haase}
\address{Inst. Math., FU Berlin, Arnimallee 2, 14195 Berlin, Germany} 
\email{albert.haase@gmail.com}
\author[Ziegler]{G\"unter M. Ziegler} 
\thanks{The research by G\"unter M. Ziegler leading to these results has received funding from the European Research Council under the European Union's Seventh Framework Programme (FP7/2007-2013) / ERC Grant agreement no.~247029-SDModels
	 	and from the DFG Collaborative Research Center TRR~109 ``Discretization in Geometry and Dynamics''.}  
\address{Inst. Math., FU Berlin, Arnimallee 2, 14195 Berlin, Germany} 
\email{ziegler@math.fu-berlin.de}

\begin{abstract}
  In 1960 Grünbaum asked whether for any finite mass in $\R^d$
  there are $d$ hyperplanes that cut it into $2^d$ equal parts.
  This was proved by Hadwiger (1966) for $d\le3$, but disproved
  by Avis (1984) for $d\ge5$, while the case $d=4$ remained open.
  
  More generally, Ramos (1996) asked for
  the smallest dimension $\Delta(j,k)$ in which for any $j$ masses
  there are $k$ affine hyperplanes that simultaneously cut each of 
  the masses into $2^k$ equal parts.
  At present the best lower bounds on $\Delta(j,k)$
  are provided by Avis (1984) and Ramos (1996), the best upper bounds 
  by Mani-Levitska, Vre\'cica \& \v{Z}ivaljevi\'c (2006).
  The problem has been an active testing ground for  
  advanced machinery from equivariant topology.
  
  We give a critical review of the work on the Grünbaum--Hadwiger--Ramos problem, 
  which includes the documentation of essential gaps in the proofs for some previous claims.
  Furthermore, we establish that $\Delta(j,2)= \frac12(3j+1)$ in the cases
  when $j-1$ is a power of~$2$, $j\ge5$. 
\end{abstract}



\date{February 10, 2015; revised February 8, 2018}

\maketitle


\section{Introduction and statement of main results}
\label{sec : Introduction and statement of main results}

In 1960 Branko Grünbaum \cite[Sec.\,4.(v)]{grunbaum1960partitions} suggested the following innocent-looking problem:
\begin{gproblem}
    Can any convex body in $\R^d$ be cut into $2^d$ pieces of equal volume
    by $d$ suitably-chosen affine hyperplanes?
\end{gproblem}
As Grünbaum noted, this is quite easy to prove for $d\le2$.
In 1966 Hadwiger \cite{hadwiger1966} answered Grünbaum's question (positively)
for $d=3$, while solving a problem raised by J. W. Jaworowski (Oberwolfach, 1963).
In the course of his argument, he was led to consider
the partition of two masses by two planes in $\R^3$.

Grünbaum's question was independently raised in computational geometry,
motivated by the construction of data structures for range queries.
In this context, Willard \cite{Willard:polygon} reproved the case $d=2$, while
the case $d=3$ was reproved by 
Yao, Dobkin, Edelsbrunner \& Paterson \cite{YaoDobkinEdelsbrunnerPaterson}. 

In 1984 Avis \cite{avis1984} answered Grünbaum's problem negatively for $d\ge5$.
Indeed, one cannot expect a positive answer there, since 
$d$ hyperplanes in $\R^d$ can be described by $d^2$ parameters,
while the hyperplanes one is looking for need to satisfy $2^d-1$
independent conditions, and $2^d-1>d^2$ for $d>4$. 
The case $d=4$ was left open: 
“The situation in $4$-space is not settled in general” \cite[p.~125]{avis1984}.

In 1996 Ramos \cite{ramos1996equipartition} formulated the general version of the hyperplane mass partition
problem for several masses:

\begin{ghrproblem}
\label{GruenbaumProblem0}
For each $j\ge1$ and $k\ge1$, determine the smallest dimension $d=\Delta (j,k)$ such that for every collection of $j$ masses
$\mathcal{M}$ on $\R^d$ there are $k$ affine hyperplanes that cut each of the $j$ masses into $2^k$ equal pieces.
\end{ghrproblem}

So Grünbaum's question was whether $\Delta(1,k)\le k$.
The special case $\Delta(j,1)=j$ of the Grünbaum--Hadwiger--Ramos problem, 
for a single hyperplane $(k=1)$,
is settled by the so-called ham-sandwich theorem,  which was
conjectured by Steinhaus and proved by Banach in 1938 (see \cite{BeyerZardecki}).
This turned out to be an incarnation of the Borsuk--Ulam theorem, 
an important and central result of early algebraic topology, with
applications that range from discrete geometry to nonlinear PDEs; 
see \cite{Steinlein1,Steinlein2} for surveys. 

It turns out, however, that the most natural configuration spaces parameterizing
$k$-tuples of oriented affine hyperplanes are products of spheres, such as $(S^d)^k$, which do 
not have the high connectivity that is required for a simple application of Borsuk--Ulam
type machinery (e.g.\ via Dold's Theorem; see Matou\v{s}ek \cite{Matousek-BU} for an introduction
to this approach).
Thus more sophisticated machinery is needed in order to decide about the 
existence of the equivariant maps proposed by various applications of 
the ``Configuration Space/Test Map Scheme'' (as developed by Sarkaria and \v{Z}ivaljevi\'c; see
again \cite{Matousek-BU} for an introduction).  
Methods that have been employed to settle such existence problems include
\begin{compactitem}[ --]
        \item equivariant cohomology (the Fadell--Husseini index \cite{husseini1988}),
        \item equivariant obstruction theory (see tom Dieck \cite[Sec.\,II.3]{tom1987transformation}), and
        \item the normal bordism approach of Koschorke \cite{koschorke1981vector}.
\end{compactitem}
In this paper we attempt to provide a status report about the partial results
obtained for the Grünbaum--Hadwiger--Ramos problem up to now. This in particular 
includes the lower and upper bounds  
\[
      \big\lceil\tfrac{2^k-1}{k}j\big\rceil\ \le\ \Delta(j,k)\ \le\ j + (2^{k-1}-1)2^{\lfloor\log_2j\rfloor},
\]
where $2^{\lfloor\log_2j\rfloor}$ is $j$ “rounded down to the nearest power of $2$,”
so $\frac12j<2^{\lfloor\log_2j\rfloor}\le j$.

The lower bound was derived by Avis \cite{avis1984} (for $j=1$) and Ramos \cite{ramos1996equipartition}
from measures concentrated on the moment curve.
The upper bound was obtained by Mani-Levitska, Vre\'cica \& \v{Z}ivaljevi\'c \cite{mani2006}
from a Fadell--Husseini index calculation. 
A table below will show that there is quite a gap between the lower and
the upper bounds --- they only coincide in the ham-sandwich case $\Delta(j,1)=j$, 
and in the case of
two hyperplanes if $j+1$ is a power of~$2$,  with $\Delta(j,2)=\frac12(3j+1)$.
All the available evidence, up to now is consistent with the expectation that
Ramos' lower bound is tight for all $j$ and~$k$; we will refer to this in the following
as the \emph{Ramos conjecture}. For example, while the above bounds specialize to
$3\le\Delta(2,2)\le4$,  Hadwiger \cite{hadwiger1966} proved  
that indeed $\Delta(2,2)=3$.
 
In addition to the general lower and upper bounds, a number of papers have treated special
cases, reductions, and relatives of the problem.
As a basis for further work (by the present authors and by others), we will in the following
provide a critical review of all the key contributions to this study, which will also include
short proofs as far as feasible. In this context we have to observe, however, that quite a number
of published proofs do not hold up upon critical inspection, and indeed 
some of the approaches employed cannot work.
As some of these errors have not been pointed out in print (although they may be known to experts),
we will provide detailed reviews and explanations in these cases.

We have, however, been able to salvage one of these results, with different methods:
We will prove below (Theorem~\ref{thm:Delta(2^t+1,2)}) that
$\Delta(j,2)= \frac12(3j+1)$ also holds if $j-1$ is a power of~$2$, $j\ge5$. 
So in this case again the
Ramos lower bound is tight while the Mani-Levitska et al.\ 
upper bound is not. (It is tight in the case $j=3$.)

\subsection{Set-up and terminology}
\label{sec : Set-up and terminology}

Any affine hyperplane $H=H_{v}(a)=\{x\in\R^d : \langle x,v\rangle =a\}$, 
given by a vector $v\in\R^d{\setminus}\{0\}$ and scalar $a\in\R$, 
determines two closed halfspaces, which we denote by 
\[
H^{0}=\{x\in \R^d :\langle x,v\rangle \geq a\}\qquad\text{and}\qquad
H^{1}=\{x\in\R^d : \langle x,v\rangle \leq a\}.
\]
Let $\mathcal{H}$ be an \emph{arrangement} (ordered tuple) of $k\ge1$ affine hyperplanes in $\R^d$, and $\alpha=(\alpha_1,\ldots,\alpha_k)\in (\Z/2)^k=\{0,1\}^k$.
The \emph{orthant} determined by the arrangement $\mathcal{H}$ and an element $\alpha\in(\Z/2)^k$ is the intersection of halfspaces 
\[
\mathcal{O}_{\alpha}^{\mathcal{H}}=H_1^{\alpha_{1}}\cap\cdots\cap H_k^{\alpha_{k}}.
\]
    A \emph{mass} on $\R^d$ is a finite Borel measure on $\R^d$
    that vanishes on every affine hyperplane. 
Without loss of generality we deal only with probability measures
(that is, masses such that $\mu(\R^d)=1$).
Examples of masses that appear frequently include
\begin{compactitem}[ --]
    \item measures given by the $d$-dimensional volume of a compact convex body $K\subset\R^d$,
    \item measures induced by an interval on the moment curve in $\R^d$,
    \item measures given by a finite family of (small, disjoint) balls.
\end{compactitem}
An arrangement $\mathcal{H}=(H_1,\ldots,H_k)$ \emph{equiparts} a collection of
masses $\mathcal{M}=(\mu_1,\ldots,\mu_j)$ if for every element $\alpha\in(\Z/2)^k$ and every $\ell\in\{1,\ldots,j\}$
\[
\mu_{\ell} (\mathcal{O}_{\alpha}^{\mathcal{H}})=\tfrac{1}{2^{k}}.
\]
Clearly this can happen only if $k\le d$. 

The Grünbaum--Hadwiger--Ramos problem thus 
asks for the smallest dimension $d=\Delta(j,k)$ 
in which any collection $\mathcal{M}$ of $j$~masses in $\R^d$ admits an 
arrangement $\mathcal{H}$ of $k$ affine hyperplanes that equi\-parts~$\mathcal{M}$.

For the proofs using equivariant topology methods,
we make additional assumptions on the masses to be considered, namely
that the measures $\mu_i$ that we deal with have compact connected support.
This assumption can be made as we can strongly approximate each mass 
by masses with compact connected support.
(This can be done ``\emph{mit passender Grenzbetrachtung und Kompaktheitserwägung auf die übliche schulmäßige Weise}'' \cite[S.~275]{hadwiger1966} as we learn from Hadwiger.)
It guarantees that the measure captured by an affine halfspace 
depends continuously on the halfspace, and more generally that the measure captured
by an orthant depends continuously on the hyperplanes that define the orthant.
Moreover, it yields that for any mass $\mu$ and 
a given vector $v$ 
the hyperplane $H_{v}(a)$ that halves the mass $\mu$ is unique, and 
depends continuously on~$v$.

One could also allow for measures supported on finitely many points,
as often considered in the computational geometry context; see e.g.\ 
\cite{avis1984} and \cite{YaoDobkinEdelsbrunnerPaterson}. 
Such point measures 
do \emph{not} satisfy the assumptions above, but they can be approximated by masses that do.
To accomodate for point measures, one would have to modify the definition of ``equiparts'' 
in such a way that
each open orthant captures \emph{at most} a fraction of $1/2^k$ of each measure.

\subsection{Summary of known Results}
\label{sec: Known Results}

We have noted that the ham-sandwich theorem yields $\Delta(j,1)=j$ and that trivially  $k\le\Delta(j,k)$.
A stronger lower bound 
was given by Ramos \cite{ramos1996equipartition}:
\begin{equation}
    \label{ramos_bound}
    \tfrac{2^k-1}{k}j\leq\Delta(j,k).
\end{equation}
Ramos believed that his bound is tight:

\begin{r-conjecture} 
$\Delta(j,k)=\lceil \tfrac{2^k-1}{k}j\rceil$ for every $j\geq 1$ and $k\geq 1$.
\end{r-conjecture}

The best upper bound to date, 
due to Mani-Levitska et al.~\cite[Thm.\,39]{mani2006}, can be phrased as follows:
\begin{equation}
    \label{mani_bound}
    \Delta(2^t+r,k)\leq 2^{t+k-1}+r\qquad\textrm{ for }t\ge0,\ 0\leq r\leq 2^t-1.
\end{equation}
The proofs of these bounds are subject of Section~\ref{sec:bounds} (Theorems~\ref{thm:lower-bound} and \ref{thm:upper-bound}).
In particular, for $k=2$ and $j=2^{t+1}-1$ the lower bound \eqref{ramos_bound} and 
the upper bound~\eqref{mani_bound} coincide, implying that
\begin{equation*}
\Delta(2^{t+1}-1,2)=3\cdot2^t-1\qquad\textrm{ for }t\ge0.
\end{equation*}

The first result that is not a consequence of a coincidence between the lower and upper bounds
\eqref{ramos_bound} and \eqref{mani_bound} is due to Hadwiger \cite{hadwiger1966},
who showed that two masses in~$\R^3$ can be simultaneously cut into four equal parts by two (hyper)planes.
We give a degree-based proof for this result in Section \ref{sec:hadwiger} (Theorem~\ref{thm:hadwiger}):
\begin{equation*}
\Delta(2,2)=3.
\end{equation*} 
As Hadwiger observed, by a simple reduction \eqref{eq:reduction_Ramos}  
this also implies that $\Delta(1,3)=3$.

Despite a number of published papers in prominent journals on 
new cases of the Ramos conjecture, the values and bounds for $\Delta(j,k)$ 
just mentioned appear to be the only ones available before with correct proofs:
The papers 
by Ramos \cite{ramos1996equipartition} from 1996, 
by Mani-Levitska et al.~\cite{mani2006} from 2006, 
and by \v{Z}ivaljevi\'c \cite{zivaljevic2008} from 2008 and  
\cite{zivaljevic2011equipartitions} from 2011 (published version \cite{zivaljevic2011equipartitions-published} from 2015) 
all contain essential gaps; 
see Sections~\ref{sec : the failure}, \ref{sec:ramos} and \ref{sec:further_gaps}.
In Table~\ref{table:claimed_bounds} we summarize the situation. 

\begin{table}
{\begin{tabular}{r c c c l | l}
{Lower} & & $\Delta(j,k)$ & & {Upper}  & {Reference for upper bound} \\
\hline 
$8$ & $\le$ & $\Delta(5,2)$ & $\le$ & ${\color{red}8}$  & \cite[Thm.\,4]{mani2006}
\\[1.7pt]
$\tfrac32 \cdot 2^{t}$ & $\le$ & $\Delta(2^{t},2)$ & $\le$ & ${\color{red}\tfrac32\cdot 2^t}$  & \cite[Thm.\,6.3]{ramos1996equipartition} \cite[Prop.\,25]{mani2006} 
\\[1.7pt]
$\tfrac32 \cdot 2^{t} +2$ & $\le$ & $\Delta(2^{t}+ 1,2)$ & $\le$ & ${\color{red}\tfrac32\cdot 2^t +2}$ &\cite[Thm.\,2.1]{zivaljevic2011equipartitions}~\cite[Thm.\,2.1]{zivaljevic2011equipartitions-published}
\\[1.7pt]
$\tfrac{7}{3} \cdot 2^{t}$ & $\le$ & $\Delta(2^{t},3)$ & $\le$ & ${\color{red}\tfrac52\cdot 2^t}$  &
 \cite[Thm.\,6.3]{ramos1996equipartition} 
\\[1.7pt]
$4$ & $\le$ & $\Delta(1,4)$ & $\le$ & ${\color{red}5}$  & \cite[Thm.\,6.3]{ramos1996equipartition} 
\\[1.7pt]
$\tfrac{15}{4} \cdot 2^{t}$ & $\le$ & $\Delta(2^{t},4)$ & $\le$ & ${\color{red}\tfrac92\cdot 2^t}$  &
 \cite[Thm.\,6.3]{ramos1996equipartition} 
\\[1.7pt]
$7$ & $\le$ & $\Delta(1,5)$ & $\le$ & ${\color{red}9}$  & \cite[Thm.\,6.3]{ramos1996equipartition} 
\\[1.7pt]
$\tfrac{31}{5} \cdot 2^{t}$ & $\le$ & $\Delta(2^{t},5)$ & $\le$ & ${\color{red}\tfrac{15}2\cdot 2^t}$  &
 \cite[Thm.\,6.3]{ramos1996equipartition} \\[1.7pt] \hline
\end{tabular}
}
\caption{Various upper bounds claimed in the literature with incorrect/incomplete
proofs, where $t\ge1$. 
For comparison, we also show the Ramos lower bounds, which are conjectured to be tight.}
\label{table:claimed_bounds}
\end{table}

\smallskip\noindent
Furthermore, in Section~\ref{sec : the failure} we show that 
\v{Z}ivaljevi\'c's approach in \cite{zivaljevic2008} towards  
the last remaining open case $\Delta(1,4)=4$ of the Grünbaum problem
fails in principle as well as in details. 

Finally, in Section~\ref{sec : D(2t+1,2)}  we prove using a degree calculation that 
\begin{equation}
\label{values of Delta(j,k) : No 3}
\Delta(2^t+1,2)=3\cdot 2^{t-1}+2\quad\text{for }t\geq 2.
\end{equation} 
By this we verify an instance of the Ramos conjecture previously claimed  by 
\v{Z}ivaljevi\'c in \cite[Thm.\,2.1]{zivaljevic2011equipartitions}~\cite[Thm.\,2.1]{zivaljevic2011equipartitions-published}.

The resulting status of the Grünbaum--Hadwiger--Ramos problem
is summarized in Table~\ref{table:summary}.

\begin{table}[ht] 
\begin{tabular}{|c|| p{2.2cm} | p{2.2cm} |  p{2.2cm}  |  p{2.2cm} |} \hline
\multicolumn{5}{|c|}{Values of $\Delta(j,k)$ for $j$ measures and $k$ hyperplanes and $t\geq 1$} \\ \hline
\diaghead{ \tiny xxxxxxxxxxxx}{ $j$  }{ $k$ } & \centering$1$ & \centering$2$ & \centering$3$ & \centering$4$\arraybackslash\\ \hline\hline
  & \tiny$1\le$\hfill&\tiny$2\le$\hfill&\tiny$3\le$\hfill&\tiny$4\le$\hfill \\[1mm]
1 & \centering$1$ & \centering$2$ & \centering$\boldsymbol{3}$ &   \\[1mm] 
  & \hfill\tiny$\le1$&\hfill\tiny$\le2$&\hfill\tiny$\le4$&\hfill\tiny$\le8$ \\ \hline
  & \tiny$2\le$\hfill&\tiny$3\le$\hfill&\tiny$5\le$\hfill&\tiny$8\le$\hfill \\[1mm]
2 & \centering$2$ & \centering$\boldsymbol{3}$ &   &   \\[1mm] 
  & \hfill\tiny$\le2$&\hfill\tiny$\le4$&\hfill\tiny$\le8$&\hfill\tiny$\le16$ \\ \hline
  & \tiny$3\le$\hfill&\tiny$5\le$\hfill&\tiny$7\le$\hfill&\tiny$12\le$\hfill \\[1mm]
3 & \centering$3$ & \centering$5$ &   &   \\[1mm] 
  & \hfill\tiny$\le3$&\hfill\tiny$\le5$&\hfill\tiny$\le9$&\hfill\tiny$\le17$ \\ \hline
$\vdots$ & \centering$\vdots$ &  &  &  \\ \hline
  & \tiny$2^{t} -1\le$\hfill&\tiny$3\cdot2^{t-1}-1\le$\hfill&\tiny$ $\hfill&  \\[1mm]
$2^{t} -1$ & \centering$2^{t} -1$ & \centering$3 \cdot 2^{t-1} -1$ &   &   \\[1mm] 
  & \hfill\tiny$\le2^{t} -1$&\hfill\tiny$\le3\cdot2^{t-1}-1$&\hfill\tiny$ $&  \\ \hline
  & \tiny$2^{t} \le$\hfill&\tiny$3\cdot2^{t-1}\le$\hfill&\tiny$ $\hfill&  \\[1mm]
$2^{t}  $ & \centering$2^{t}$ & \centering$\boldsymbol{\le 3 \cdot 2^{t-1} +1}$ &   &   \\[1mm] 
  & \hfill\tiny$\le2^{t} $&\hfill\tiny$\le4\cdot2^{t-1}$&\hfill\tiny$ $&  \\ \hline
  & \tiny$2^{t} +1\le$\hfill&\tiny$3\cdot2^{t-1}+2\le$\hfill&\tiny$ $\hfill&  \\[1mm]
$2^{t} +1$ & \centering$2^{t} +1$ & \centering$\boldsymbol{3 \cdot 2^{t-1} +2}$ &   &   \\[1mm] 
  & \hfill\tiny$\le2^{t} +1$&\hfill\tiny$\le4\cdot2^{t-1}+1$&\tiny\hfill$ $&  \\ \hline
\end{tabular}
\caption{Each square in this table records the 
lower bound \eqref{ramos_bound} in the north-west corner, the
upper bound \eqref{mani_bound} in the south-east corner, 
and the exact value or improved bound 
in the center. 
The values/bounds that do \emph{not} simply 
follow from the two bounds coinciding are typeset in \textbf{boldface}.}
\label{table:summary}
\end{table}

\section{Transition to equivariant topology}
\label{sec:conf}

In this section we demonstrate how the Gr\"{u}nbaum--Hadwiger--Ramos problem induces a problem of Borsuk--Ulam type.

Consider a collection of $j$ masses $\mathcal{M}=(\mu_1, \dots, \mu_j)$ on $\R^d$.
We would like to find an arrangement of $k$ affine hyperplanes $\mathcal{H}=(H_1, \dots, H_k)$ in $\R^d$ such that 
$\mathcal{H}$ equiparts $\mathcal{M}$.

\subsection{The configuration spaces}

The sphere $S^d$ can be seen as the space of all oriented affine hyperplanes 
in~$\R^d$, where the north pole $e_{d+1}$ and the south pole $-e_{d+1}$ lead to \emph{hyperplanes at infinity}. 
For this we embed $\R^d$ into $\R^{d+1}$ via the map $(x_1,\dots, x_d) \longmapsto(x_1,\ldots,x_d,1)$. 
An oriented affine hyperplane in $\R^d$ is mapped to an oriented affine $(d-1)$-dimensional subspace of $\R^{d+1}$ and is extended (uniquely) to an oriented linear hyperplane. 
The unit normal vector on the positive side of the linear hyperplane defines a point on the sphere $S^d$.  
There is a one-to-one correspondence between points $v$ in $S^d \setminus \{e_{d+1},-e_{d+1}\}$ and oriented affine hyperplanes $H_v$ in $\R^d$.
Let $H_v^{0}$ and $H_v^{1}$ denote the positive resp.\ the negative closed half-space determined by $H_v$. 
The positive side of the hyperplane at infinity
is $\R^d$ for $v=e_d$ and $\emptyset$ for $v=- e_d$.
Hence $H_{-v}^{0} = H_{v}^{1}$ for every~$v$.

There are three natural configuration spaces that parametrize arrangements of $k$ oriented affine hyperplanes in $\R^d$. 
Note that hyperplanes at infinity cannot arise as solutions to the mass partition problem, since they produce empty orthants. 
Hence we do not need to worry about the fact that the following configuration spaces incorporate these.
 
The configuration spaces we consider are 
\begin{enumerate}[\normalfont (i)] \label{enum_three_cstm_schemes}
\item the \emph{join configuration space} $X_{d,k} = (S^d)^{*k} \cong S^{dk + k-1}$, the $k$-fold join of spheres $S^d$,
\item the \emph{product configuration space} $Y_{d,k} = (S^d)^k$, the $k$-fold Cartesian product of spheres $S^d$, and
\item the \emph{free configuration space} 
$Z_{d,k} = \{(x_1,\dots,x_k)\in Y_{d,k}: x_i\neq\pm x_j\text{\,for\,}i<j\}$, the largest subspace of $Y_{d,k}$ on
which the group action described below is free. 
\end{enumerate}

\subsection{The group}
The Weyl group $\Wk_k=(\Z/2)^k \rtimes \Sym_k$, also known as the group of signed permutations, or 
as the symmetry group of the $k$-dimensional cube, acts naturally  
on the configuration spaces we consider:
It permutes the hyperplanes, and changes their orientations. 
Correspondingly it also acts on the test spaces, which record the fractions of the $j$ masses captured in each of the $2^k$ orthants.

\subsection{The action on configuration spaces}
\label{sub:action of configuration space}
Elements in $X_{d,k}$ can be presented as formal ordered convex combinations $t_1 v_1 + \dots + t_k v_k$, where $t_i \geq 0,\,\sum t_i = 1$ and $v_i \in S^d$. 
The action of the group $\Wk_k = (\Z/2)^k \rtimes \Sym_k$ on the space $X_{d,k}$ is defined as follows. 
Each copy of $\Z/2$ acts antipodally on the corresponding sphere $S^d$ while the symmetric group $\Sym_k$ acts by permuting coordinates. 
More precisely, let $((\beta_1, \dots, \beta_k) \rtimes \tau) \in \Wk_k$ and $t_1 v_1 + \dots + t_k v_k \in X_{d,k}$, then
\begin{multline*}
((\beta_1, \dots, \beta_k) \rtimes \tau)\cdot (t_1 v_1 + \dots + t_k v_k) =\\
t_{\tau^{-1}(1)} (-1)^{\beta_1} v_{\tau^{-1}(1)} + \dots +  t_{\tau^{-1}(k)} (-1)^{\beta_k} v_{\tau^{-1}(k)}.
\end{multline*} 
The diagonal subspace 
$\{ \tfrac1kv_1+\dots+\tfrac1kv_k \in X_{d,k} \} \cong Y_{d,k}$ of $X_{d,k}$ 
is invariant under the $\Wk_k$-action and thus has a well-defined induced $\Wk_k$-action. Furthermore, there is a well-defined induced action of $\Wk_k$ on $Z_{d,k}$, since the action leaves the subset $Y_{d,k}^{>1}$ of
all points in~$Y_{d,k}$ with non-trivial stabilizers invariant. 
Note that for $k \ge 2$ the $\Wk_k$-action is free on~$Z_{d,k}$ but not on $X_{d,k}$ or on $Y_{d,k}$.

\subsection{The test space}
\label{sub:test space}
Consider the vector space $\R^{(\Z/2)^k}$ and the subspace of codimension $1$
\[
U_k=\Big\{ (y_{\alpha})_{\alpha\in (\Z/2)^k} \in \R^{(\Z/2)^k} : \sum_{\alpha\in (\Z/2)^k} y_{\alpha} =0\Big\}.
\]
We define an action of $\Wk_k$ on $\R^{(\Z/2)^k}$ as follows:
$((\beta_1, \dots, \beta_k) \rtimes \tau) \in \Wk_k$  acts on a vector $(y_{(\alpha_1,\dots,\alpha_k)})_{(\alpha_1,\dots,\alpha_k) \in (\Z/2)^k}\in \R^{(\Z/2)^k}$ by acting on its indices
\[
((\beta_1, \dots, \beta_k) \rtimes \tau) \cdot (\alpha_1, \dots, \alpha_k) = (\beta_1 + \alpha_{\tau^{-1}(1)}, \dots, \beta_k + \alpha_{\tau^{-1}(k)}),
\]
where the addition is in $\Z/2$. 
With respect to this action of $\Wk_k$ the subspace $U_k$ is an $\Wk_k$-subrepresentation. 

The \emph{test space} related to both configuration spaces $Y_{d,k}$ and $Z_{d,k}$ and a family of $j$ masses is the $\Wk_k$-representation $U_k^{\oplus j}$, where the action is diagonal.

\subsection{The test map}

Consider the following map from the configuration space $Y_{d,k}$ to the test space $U_{k}^{\oplus j}$ associated to the collection of masses $\mathcal{M}=(\mu_1, \dots, \mu_j)$:
\begin{align*}
\phi_{\mathcal{M}} \colon Y_{d,k} &\longrightarrow U_{k}^{\oplus j},\\
(v_1, \dots, v_k) &\longmapsto \Big( \big(\mu_i(H^{\alpha_1}_{v_1} \cap \dots \cap H^{\alpha_k}_{v_k}) - \tfrac{1}{2^k} \big)_{(\alpha_1,\dots,\alpha_k)\in (\Z/2)^k} \Big)_{i \in \{1,\dots, j\}}.
\end{align*}
The map $\phi_{\mathcal{M}}$ is $\Wk_k$-equivariant with respect to the actions introduced in Sections~\ref{sub:action of configuration space} and \ref{sub:test space}.
The essential property of the map $\phi_{\mathcal{M}}$ is that \emph{the oriented hyperplanes $H_{v_1}, \dots, H_{v_k}$ equipart $\mathcal{M}$ if and only if $\phi_{\mathcal{M}} (v_1, \dots, v_k) = 0\in U_{k}^{\oplus j}$}. 
Note that the space $U_{k}^{\oplus j}$ does not depend on the dimension $d$.

Finally, we define the $\Wk_k$-equivariant map $\psi_{\mathcal{M}} \colon Z_{d,k}\longrightarrow U_{k}^{\oplus j}$ as the restriction of $\phi_{\mathcal{M}}$ to $Z_{d,k}$.
Again, the essential property holds: \emph{The oriented hyperplanes $H_{v_1}, \dots, H_{v_k}$ equipart $\mathcal{M}$ if and only if $\psi_{\mathcal{M}} (v_1, \dots, v_k) = 0 \in U_{k}^{\oplus j}$}. 

The maps $\phi_{\mathcal{M}} $ and $\psi_{\mathcal{M}}$ are called \emph{test maps}.
Thus we have established the following criteria.

\begin{proposition}
\label{prop:CS/TM}
Let $d\ge1$, $k\ge1$, and $j\ge1$.
\begin{compactenum}[\rm (1)]
\item Let $\mathcal{M}$ be a collection of $j$ masses on $\R^d$, and let $\phi_{\mathcal{M}} \colon Y_{d,k} \longrightarrow U_{k}^{\oplus j}$ and $\psi_{\mathcal{M}} \colon Z_{d,k} \longrightarrow U_{k}^{\oplus j}$ be the $\Wk_k$-equivariant maps defined above.
If \,$0\in \im\phi_{\mathcal{M}}$, or\, $0\in \im\psi_{\mathcal{M}}$, then there are $k$ oriented hyperplanes that equipart $\mathcal{M}$.
\item Let $S(U_{k}^{\oplus j})$ denote the unit sphere in the vector space $U_{k}^{\oplus j}$.
If there is no $\Wk_k$-equivariant map $Y_{d,k} \longrightarrow S(U_{k}^{\oplus j})$, or $Z_{d,k} \longrightarrow S(U_{k}^{\oplus j})$, then $\Delta(j,k)\le d$.
\end{compactenum}
\end{proposition}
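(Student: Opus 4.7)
My plan is to observe that both parts follow directly from how $\phi_{\mathcal{M}}$ and $\psi_{\mathcal{M}}$ were built in Sections~\ref{sub:action of configuration space} and~\ref{sub:test space}, once one checks that these test maps cannot vanish on the ``degenerate'' configurations in $Y_{d,k}$.

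First I would treat part~(1). Unpacking the definition of $\phi_{\mathcal{M}}$, the equation $\phi_{\mathcal{M}}(v_1,\dots,v_k)=0$ is equivalent to $\mu_i(\mathcal{O}_\alpha^{\mathcal{H}})=\tfrac1{2^k}$ for every $i\in\{1,\dots,j\}$ and every $\alpha\in(\Z/2)^k$. The only subtlety is that $Y_{d,k}=(S^d)^k$ also contains points $(v_1,\dots,v_k)$ for which some $v_\ell\in\{\pm e_{d+1}\}$ (``hyperplane at infinity'') or some $v_i=\pm v_j$ (coincident/antiparallel hyperplanes). In either case a brief analysis of the orthants shows that at least one coordinate of $\phi_{\mathcal{M}}$ is forced to equal $1-\tfrac1{2^k}$ or $-\tfrac1{2^k}$ (an orthant capturing the full mass, or an empty orthant capturing none), and is therefore nonzero. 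Hence every zero of $\phi_{\mathcal{M}}$ actually comes from a genuine arrangement of $k$ affine hyperplanes equipartitioning $\mathcal{M}$; since $\psi_{\mathcal{M}}$ is the restriction of $\phi_{\mathcal{M}}$ to $Z_{d,k}$, the same conclusion holds for $\psi_{\mathcal{M}}$.

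Part~(2) I would prove by contraposition combined with a radial normalization. Assume $\Delta(j,k)>d$ and pick a collection $\mathcal{M}$ of $j$ masses on $\R^d$ admitting no equipartition by $k$ affine hyperplanes; by part~(1) this means $0\notin\im\phi_{\mathcal{M}}$ and $0\notin\im\psi_{\mathcal{M}}$. The radial retraction $r\colon U_k^{\oplus j}\setminus\{0\}\to S(U_k^{\oplus j})$, $y\mapsto y/\|y\|$, is $\Wk_k$-equivariant because $\Wk_k$ acts on $U_k^{\oplus j}$ by (permutation-of-coordinate) linear isometries, so $r\circ\phi_{\mathcal{M}}$ and $r\circ\psi_{\mathcal{M}}$ produce $\Wk_k$-equivariant maps $Y_{d,k}\to S(U_k^{\oplus j})$ and $Z_{d,k}\to S(U_k^{\oplus j})$, contradicting the hypothesis of~(2). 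No serious obstacle is expected here: the argument relies only on continuity of $\phi_{\mathcal{M}}$ (guaranteed by the compact-connected-support assumption in Section~\ref{sec : Set-up and terminology}, which ensures that the measure captured by an orthant depends continuously on the defining hyperplanes, extending continuously to the hyperplanes-at-infinity locus) and on the $\Wk_k$-equivariance that is built into the definitions.
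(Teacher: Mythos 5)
Your proposal is correct and follows essentially the same route as the paper, which treats the proposition as immediate from the construction of the test maps: zeros correspond exactly to equipartitions (with the observation, made in the paper when the configuration spaces are introduced, that hyperplanes at infinity and coincident/antiparallel hyperplanes produce empty or full orthants and hence cannot be zeros), and the contrapositive of part (2) is obtained by the equivariant radial retraction onto $S(U_k^{\oplus j})$. No gaps.
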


\noindent
We have an equivalence $0\in \im\phi_{\mathcal{M}} \Longleftrightarrow 0\in \im\psi_{\mathcal{M}}$, since
on the non-free part two hyperplanes are equal or opposite, so some orthants are empty, and we do
not loose any equipartitions by deleting the non-free part.
However, the non-existence of an $\Wk_k$-equivariant map $Z_{d,k} \longrightarrow S(U_{k}^{\oplus j})$ only implies the non-existence of an $\Wk_k$-equivariant map $Y_{d,k} \longrightarrow S(U_{k}^{\oplus j})$, but not conversely.

The join configuration spaces $X_{d,k}$ were introduced in \cite{blagojevic2011}.
They will not be used here, but will be essential in our subsequent work \cite{BlagojevicFrickHaaseZiegler2}. 
The construction of the corresponding $\Wk_k$-equivariant test map is given in \cite[Sec.\,2.1]{blagojevic2011}.
The product configuration spaces $Y_{d,k}$ embeds into the join configuration space $X_{d,k}$ via the diagonal embeddings $Y_{d,k}\lhook\joinrel\longrightarrow X_{d,k}$, $(v_1,\dots,v_k) \longmapsto \frac{1}{k}v_1 + \dots + \frac{1}{k}v_k$. 
They play a central role for the configuration space/test map scheme that will produce all major results in the following.
The free configuration spaces $Z_{d,k}$ appear in the literature as \emph{orbit configuration spaces}; see for example \cite{feichtner},
where they are denoted by $F_{\Z/2}(S^d,k)$. 
We will show below that the restriction of the configuration space/test map scheme 
to $Z_{d,k}$ is problematic, as
for this restricted scheme the equivariant maps, 
whose non-existence would be needed for settling new cases of the Ramos conjecture,
do exist, partially for trivial reasons; see in particular Section~\ref{sec : the failure}.%

\section{Bounds and reductions for $\Delta(j,k)$}
\label{sec:bounds}

In this section we present the general lower and upper bounds for the function $\Delta(j,k)$.
For the sake of completeness we present proofs. 

\subsection{The lower bounds by Ramos}
\label{subsec:lower-bounds}

\begin{theorem}[Ramos {\cite{ramos1996equipartition}}]\label{thm:lower-bound}
For $j\ge1$ and $k\ge1$, the minimal dimension $d = \Delta(j,k)$ such that any $j$ masses on $\R^d$ can be equiparted by $k$ hyperplanes satisfies
\[
\lceil \tfrac{(2^k -1)}{k}j \rceil \le \Delta(j,k).
\]
\end{theorem}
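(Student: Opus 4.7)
The plan is to exhibit, for each dimension $d$ below the claimed bound, a collection of $j$ masses that no $k$ affine hyperplanes can equipart. The standard source of such adversarial masses is the moment curve. Concretely, I would fix $\gamma \colon \R \to \R^d$, $\gamma(t) = (t, t^2, \ldots, t^d)$, pick $j$ pairwise disjoint closed arcs $I_1 < I_2 < \cdots < I_j$ along $\gamma$, and let $\mu_\ell$ be the normalized arc-length (i.e.\ $1$-dimensional Hausdorff) measure on $I_\ell$. Each $\mu_\ell$ is a valid mass on $\R^d$ because every affine hyperplane meets $\gamma$ in a finite set, which has $1$-dimensional measure zero.

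The key geometric input is that any affine hyperplane $H = H_v(a)$ intersects $\gamma$ in at most $d$ points: substituting $\gamma(t)$ into $\langle x,v\rangle - a$ produces the polynomial $v_1 t + v_2 t^2 + \cdots + v_d t^d - a$ of degree at most $d$, which is nonzero since $v\ne 0$, and so has at most $d$ real roots. Consequently an arrangement $\mathcal{H}$ of $k$ affine hyperplanes meets $\gamma$ in at most $kd$ points in total, splitting as $\sum_\ell c_\ell \le kd$, where $c_\ell$ denotes the number of crossings falling inside $I_\ell$.

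Now suppose some $\mathcal{H}$ equiparts $\mathcal{M} = (\mu_1, \ldots, \mu_j)$. The crossings on $I_\ell$ subdivide it into $c_\ell + 1$ open sub-arcs, each lying in the interior of exactly one orthant $\mathcal{O}_\alpha^{\mathcal{H}}$; moving across the hyperplane $H_i$ flips the $i$-th coordinate of $\alpha$, so the orthant labels of consecutive sub-arcs are edge-adjacent vertices of the $k$-dimensional cube $(\Z/2)^k$. Since $\mu_\ell$ has full support on $I_\ell$, the equipartition condition $\mu_\ell(\mathcal{O}_\alpha^{\mathcal{H}}) = 2^{-k} > 0$ forces \emph{every} orthant $\alpha \in (\Z/2)^k$ to contain at least one sub-arc of $I_\ell$; otherwise that orthant would receive $\mu_\ell$-mass $0$. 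Hence the sequence of orthant labels traced by $I_\ell$ is a walk in $(\Z/2)^k$ whose vertex set is all of $(\Z/2)^k$, which requires $c_\ell + 1 \ge 2^k$.

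Summing $c_\ell \ge 2^k - 1$ over $\ell = 1,\ldots,j$ yields $j(2^k - 1) \le \sum_\ell c_\ell \le kd$, and since $d$ is an integer this gives $\Delta(j,k) \ge \lceil (2^k-1)j/k \rceil$. The argument is essentially combinatorial once the polynomial degree bound on $|\gamma \cap H|$ is in place; the only step that deserves a little care is the walk-on-cube observation, which is the routine fact that a walk visiting every vertex of $(\Z/2)^k$ must contain at least $2^k$ entries. Apart from that, one should check that the adversarial measures satisfy the mass axioms of Section~\ref{sec : Set-up and terminology}, which is immediate for $d \ge 2$ and also holds for $d=1$, where hyperplanes are single points of Lebesgue measure zero.
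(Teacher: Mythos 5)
Your proposal is correct and follows exactly the paper's argument: masses on disjoint intervals of the moment curve, the degree-$d$ bound on hyperplane–curve intersections giving at most $kd$ crossings, and the observation that each interval must be cut at least $2^k-1$ times so that all $2^k$ orthants receive positive mass. You simply spell out in more detail the step the paper states in one sentence (that an equipartition forces at least $(2^k-1)j$ intersections), and that elaboration is sound.
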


\begin{proof}
Let $\gamma\colon\R \longrightarrow \R^d$ given by $\gamma(t) = (t,t^2,\dots, t^d)$ be the moment curve in~$\R^d$.  
Choose $j$ pairwise disjoint intervals on this curve and let $\mu_1,\dots,\mu_j$
be the corresponding masses.
Any equipartition of these masses by $k$ hyperplanes must give rise to at least $(2^k-1)j$ intersections of the hyperplanes with $\im\gamma$. 
The result now follows if we recall that the moment curve has degree $d$:
Any hyperplane meets it in at most $d$ distinct
points, so  $k$ hyperplanes can intersect it in at most $dk$ points.
\end{proof}

\subsection{The upper bounds by Mani-Levitska et al.}
\label{subsec:upper-bounds}
\begin{theorem}[Mani-Levitska et al.~{\cite[Thm.\,39]{mani2006}}]\label{thm:upper-bound}
Given integers $t \geq 0$, $k \geq 1$, and $0\leq r\leq 2^t-1$, the minimal dimension $d = \Delta(2^t+r,k)$ such that any $j = 2^t + r$ masses on $\R^d$ can be equiparted by $k$ hyperplanes satisfies
\[
\Delta(2^t + r,k) \le 2^{t + k - 1} + r.
\]
\end{theorem}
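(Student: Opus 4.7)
By Proposition~\ref{prop:CS/TM}(2) applied with the product configuration space $Y_{d,k}$, it suffices to prove that for $d=2^{t+k-1}+r$ and $j=2^t+r$ there is no $\Wk_k$-equivariant map $Y_{d,k}\longrightarrow S(U_k^{\oplus j})$. I would restrict attention to the elementary abelian subgroup $G=(\Z/2)^k\subset\Wk_k$, which acts freely on $Y_{d,k}=(S^d)^k$ as the product of antipodal actions, so any $\Wk_k$-equivariant map restricts to a $G$-equivariant map. The quotient is $Y_{d,k}/G=(\RP^d)^k$ with cohomology
\[
H^*((\RP^d)^k;\F_2)\ \cong\ \F_2[x_1,\ldots,x_k]/(x_1^{d+1},\ldots,x_k^{d+1}),
\]
where $x_i\in H^1(\RP^d;\F_2)$ is the generator pulled back from the $i$-th factor.

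\textbf{Cohomological reduction.} The representation $U_k$ decomposes under $G$ as $\bigoplus_{0\ne\alpha\in(\Z/2)^k}L_\alpha$, where the line bundle associated to $L_\alpha$ has first Stiefel--Whitney class $\alpha_1x_1+\cdots+\alpha_k x_k$. A $G$-equivariant map $Y_{d,k}\longrightarrow S(U_k^{\oplus j})$ corresponds to a nowhere-zero section of the rank-$j(2^k-1)$ real vector bundle $E$ over $(\RP^d)^k$ associated to $U_k^{\oplus j}$, and the top Stiefel--Whitney class of $E$ is the primary obstruction to such a section. It therefore suffices to prove
\[
w_{\mathrm{top}}(E)\ =\ W^j\ \ne\ 0\quad\text{in}\quad\F_2[x_1,\ldots,x_k]/(x_1^{d+1},\ldots,x_k^{d+1}),\qquad W\ =\ \prod_{0\ne\alpha\in(\Z/2)^k}(\alpha_1 x_1+\cdots+\alpha_k x_k).
\]
In the Fadell--Husseini language, this is exactly the non-containment $\ind_G S(U_k^{\oplus j})\not\subseteq\ind_G Y_{d,k}$.

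\textbf{Algebraic core.} The remaining task is to exhibit a monomial $x_1^{a_1}\cdots x_k^{a_k}$ of total degree $j(2^k-1)$ with $a_i\le d$ for every $i$ whose coefficient in the expansion of $W^j$ is odd. I would proceed by induction on $k$, using the factorization
\[
W_k(x_1,\ldots,x_k)\ =\ W_{k-1}(x_1,\ldots,x_{k-1})\,\cdot\,\prod_{\alpha'\in(\Z/2)^{k-1}}(x_k+\alpha_1 x_1+\cdots+\alpha_{k-1}x_{k-1})
\]
together with the fact that in characteristic~$2$ the second factor is an additive (i.e.\ $2$-linearized) polynomial in $x_k$ of degree $2^{k-1}$ whose constant-in-$x_k$ term is exactly $W_{k-1}(x_1,\ldots,x_{k-1})$. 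Raising to the power $j$ and splitting $j=2^t+r$ to exploit the Frobenius identity $(a+b)^{2^t}=a^{2^t}+b^{2^t}$, the oddness condition on the coefficient reduces via repeated applications of Lucas' theorem to the transparent identity $\binom{2^t+r}{r}\equiv 1\pmod 2$ (valid since $r<2^t$).

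\textbf{Main obstacle.} The hardest step is the final combinatorial construction: producing a single monomial that satisfies the degree bound $a_i\le d$ uniformly across the range $0\le r\le 2^t-1$ and all $k\ge 1$. The two extremes are transparent: the base case $k=1$ is the ham-sandwich theorem, and for $r=0$ the leading lex monomial $x_1^j x_2^{2j}\cdots x_k^{2^{k-1}j}$ of $W^j$ already has $2^{k-1}j=2^{t+k-1}=d$. For $r>0$ the leading term overshoots the bound in the variable $x_k$, and the induction must trade powers of $x_k$ for powers of $x_1,\ldots,x_{k-1}$ through a careful choice of cross-terms in each linear factor of $W_k$, keeping the binomial coefficient product odd. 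The Dickson-invariant structure of $W_k$ is what makes this interpolation possible and is the engine of the induction.
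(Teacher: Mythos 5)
Your reduction is exactly the paper's: restrict to $(\Z/2)^k$, identify $\ind_{(\Z/2)^k}(Y_{d,k})=\langle u_1^{d+1},\ldots,u_k^{d+1}\rangle$ and $\ind_{(\Z/2)^k}(S(U_k^{\oplus j}))=\langle W^j\rangle$ with $W$ the top Dickson polynomial, and reduce everything to the algebraic statement $W^j\notin\langle u_1^{d+1},\ldots,u_k^{d+1}\rangle$. Up to that point the proposal is sound and matches the source.

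However, the step you yourself label the ``main obstacle'' --- exhibiting a monomial of $W^j$ with odd coefficient and all exponents $\le d$ --- is precisely the content of the proof, and you do not carry it out; moreover the route you sketch for it (an induction on $k$ trading powers of $x_k$ for powers of $x_1,\ldots,x_{k-1}$ via cross-terms in the linear factors, with oddness reduced by Lucas to $\binom{2^t+r}{r}\equiv 1$) is not what is needed and would be considerably harder than the actual argument. The paper's resolution requires no induction and no cross-terms: write $W=\sum_{\pi\in\Sym_k}u_{\pi(1)}^{2^{k-1}}\cdots u_{\pi(k)}^{2^0}$, apply Frobenius to get $W^{2^t}=\sum_\pi u_{\pi(1)}^{2^{k+t-1}}\cdots u_{\pi(k)}^{2^t}$, and multiply the lexicographically \emph{leading} monomial of $W^{2^t}$ (exponents decreasing in $i$) by the $r$-th power of the \emph{reversed} diagonal monomial $u_1 u_2^2\cdots u_k^{2^{k-1}}$ of $W$ (exponents increasing in $i$). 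The resulting monomial $u_1^{2^{k+t-1}+r}u_2^{2^{k+t-2}+2r}\cdots u_k^{2^t+2^{k-1}r}$ has exponent $2^{k+t-i}+2^{i-1}r$ in $u_i$, which is $\le 2^{k+t-1}+r=d$ exactly because $r\le 2^t-1\le 2^{k+t-i}$ for $i\ge 2$ --- this opposite pairing of orderings is the point you were missing, and it is where the hypothesis $r\le 2^t-1$ enters. One then checks this monomial occurs exactly once in the expansion of $W^{2^t}\cdot W^r$, so its coefficient is $1$, not an a priori binomial coefficient. Without this (or an equivalent) explicit construction the proof is incomplete.
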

\begin{proof}
Let $d=2^{t + k - 1} + r$ and $j= 2^t + r$. 
According to Proposition~\ref{prop:CS/TM} it suffices to prove that there is 
no $(\Z/2)^k$-equivariant, and consequently no $\Wk_k$-equivariant, map  
$Y_{d,k}\longrightarrow S(U_{k}^{\oplus j})$.
We prove this using the Fadell--Husseini ideal-valued index theory \cite{husseini1988}, 
for the group $(\Z/2)^k$ and $\F_2$ coefficients.

Let $(\Z/2)^k=\langle \varepsilon_1,\ldots,\varepsilon_k\rangle$ with $\varepsilon_i$ acting antipodally on the $i$-th sphere in the product $Y_{d,k}=(S^d)^k$.
The cohomology of $(\Z/2)^k$ is
$H^*((\Z/2)^k;\F_2)=\F_2[u_1,\ldots,u_k]$, 
where $\deg(u_i)=1$ and the variable $u_i$ corresponds to the generator 
$\varepsilon_i$, $1\leq i\leq k$.
Then according to \cite[Ex.\,3.3]{husseini1988}
\[
\ind_{(\Z/2)^k} (Y_{d,k};\F_2)=\big\langle u_1^{d+1},\ldots, u_k^{d+1}\big\rangle.
\]
According to \cite[Prop.\,3.7]{husseini1988} or \cite[Prop.\,3.13]{blagojevic2011} 
we have that 
\[
\ind_{(\Z/2)^k}(S(U_{k}^{\oplus j});\F_2)=\Big\langle  \Big(\prod_{(\alpha_1,\ldots,\alpha_k)\in (\Z/2)^k{\setminus}\{0\}}  (\alpha_1u_1+\cdots+\alpha_ku_k)\Big)^j \Big\rangle.
\]

Now assume that there is a $(\Z/2)^k$-equivariant map 
$Y_{d,k}\longrightarrow S(U_{k}^{\oplus j})$. 
Then a basic property of the Fadell--Husseini index \cite[Sec.\,2]{husseini1988} implies that 
\[
\ind_{(\Z/2)^k}(S(U_{k}^{\oplus j});\F_2)\subseteq \ind_{(\Z/2)^k} (Y_{d,k};\F_2), 
\]
and consequently 
\begin{equation}
\label{eq:upper_bound}
\Big(\prod_{(\alpha_1,\ldots,\alpha_k)\in (\Z/2)^k{\setminus}\{0\}}  (\alpha_1u_1+\cdots+\alpha_ku_k) \; \;\Big)^j\in\big\langle u_1^{d+1},\ldots, u_k^{d+1}\big\rangle.
\end{equation}
Let us denote 
\[
p=\prod_{(\alpha_1,\ldots,\alpha_k)\in (\Z/2)^k{\setminus}\{0\}}  (\alpha_1u_1+\cdots+\alpha_ku_k)\,\in\,\F_2[u_1,\dots,u_k].
\]
As a Dickson polynomial of maximal degree \cite[Sec.\,III.2]{adem_milgram2004} it can be presented as
\[ p=\sum_{\pi\in\Sym_k}u_{\pi(1)}^{2^{k-1}}u_{\pi(2)}^{2^{k-2}}\cdots u_{\pi(k)}^{2^0}.\] 
Therefore, 
\begin{eqnarray*}
p^j &=& \Big(\prod_{(\alpha_1,\ldots,\alpha_k)\in (\Z/2)^k{\setminus}\{0\}}  (\alpha_1u_1+\cdots+\alpha_ku_k)\;\; \Big)^j\\
&=&
\Big(\sum_{\pi\in\Sym_k}u_{\pi(1)}^{2^{k-1}}u_{\pi(2)}^{2^{k-2}}\cdots u_{\pi(k)}^{2^0}\Big)^{2^t + r}\\
      &=&\Big(\sum_{\pi\in\Sym_k}u_{\pi(1)}^{2^{k+t-1}}u_{\pi(2)}^{2^{k+t-2}}\cdots u_{\pi(k)}^{2^t}\Big)\Big(\sum_{\pi\in\Sym_k}u_{\pi(1)}^{2^{k-1}}u_{\pi(2)}^{2^{k-2}}\cdots u_{\pi(k)}^{2^0}\Big)^{r}\\
       &=&\big(u_{1}^{2^{k+t-1}}u_{2}^{2^{k+t-2}}\cdots u_{k}^{2^t}\big)\cdot\big(u_1^ru_2^{2r}\cdots u_k^{2^{k-1}r}     \big) +\mathrm{Rest}\\
		&=&u_{1}^{2^{k+t-1}+r}u_{2}^{2^{k+t-2}+2r}\cdots u_{k}^{2^t+2^{k-1}r} +\mathrm{Rest},
\end{eqnarray*}
where $\mathrm{Rest}$ does not contain the monomial $u_{1}^{2^{k+t-1}+r}u_{2}^{2^{k+t-2}+2r}\cdots u_{k}^{2^t+2^{k-1}r}$.
Thus $p^j\notin \langle u_1^{d+1},\ldots, u_k^{d+1}\rangle$, which contradicts \eqref{eq:upper_bound}.
This concludes the proof of the non\-existence of a $(\Z/2)^k$-equivariant map $Y_{d,k}\longrightarrow S(U_{k}^{\oplus j})$.
\end{proof}

\subsection{Dimension reductions via constraints}
\label{subsec:reductions}

In order to bound $\Delta(j,k)$ it is not always necessary to make use of 
advanced topological methods, as there are also reduction arguments available: Hadwiger and Ramos used the rather obvious fact that 
\begin{equation}
\label{eq:reduction_Ramos}
\Delta(j,k) \le \Delta(2j, k-1),
\end{equation} 
while Matschke in \cite{matschke2009note} proved that
\begin{equation}
\label{eq:reduction_Matschke}
\Delta(j,k) \le \Delta(j+1,k)-1. 
\end{equation}
We employ a simple combinatorial reduction argument to deduce the non-existence of equivariant maps
and, in particular, to obtain a topological analog of Matschke's result,
Proposition~\ref{prop:Matschke-reduction_analog}.
Recently, we used this approach to give elementary proofs of old and new Tverberg-type results \cite{blagojevic2014}.

For $\alpha\in (\Z/2)^k$ let $V_\alpha$ be the one-dimensional real $(\Z/2)^k$-representation for which $\beta \in (\Z/2)^k$ acts non-trivially if and only if $\sum_{i=1}^k \alpha_i\beta_i=1 \mod 2$. 
Then there is an isomorphism of $(\Z/2)^k$-representations 
$U_k\cong \bigoplus_{\alpha\in (\Z/2)^k{\setminus}\{0\}}V_{\alpha}$.
Denote by $A \subseteq (\Z/2)^k$ the subset of all $\alpha=(\alpha_1,\ldots,\alpha_k) \in (\Z/2)^k$ with exactly one $\alpha_i$ non-zero, and let $B \subseteq (\Z/2)^k$ be the subset of all 
$\alpha \in (\Z/2)^k$ with more than one $\alpha_i$ non-zero. 
The representation $U_k$ splits into 
$\bigoplus_{\alpha\in A} V_{\alpha} \oplus \bigoplus_{\alpha\in B} V_{\alpha}$.

\begin{proposition}\label{prop:Matschke-reduction_analog}
	If there is no $\Wk_k$-equivariant map $Y_{d,k} \longrightarrow S(U_k^{\oplus j})$, then there is also no $\Wk_k$-equivariant map $Y_{d-1,k} \longrightarrow S\big(U_k^{\oplus (j-1)} \oplus \bigoplus_{\alpha\in B} V_{\alpha}\big)$.
\end{proposition}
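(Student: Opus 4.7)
The plan is to prove the contrapositive: assuming there is a $\Wk_k$-equivariant map $g\colon Y_{d-1,k}\to S(W)$ with $W:=U_k^{\oplus(j-1)}\oplus\bigoplus_{\alpha\in B}V_\alpha$, I would construct a $\Wk_k$-equivariant map $G\colon Y_{d,k}\to S(U_k^{\oplus j})$. The representation decomposition $U_k\cong\bigoplus_{\alpha\in A}V_\alpha\oplus\bigoplus_{\alpha\in B}V_\alpha$ yields $U_k^{\oplus j}\cong W\oplus\bigoplus_{i=1}^kV_{e_i}$, where on $\bigoplus_iV_{e_i}\cong\R^k$ the generator $\varepsilon_i$ of $(\Z/2)^k$ negates the $i$-th coordinate and $\Sym_k$ permutes coordinates. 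The task is then to realize this extra summand $\bigoplus_iV_{e_i}$ geometrically, using the additional dimension that distinguishes $Y_{d,k}$ from $Y_{d-1,k}$.

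For this I would view $S^d\subset\R^d\times\R$ via suspension coordinates: each $v\in S^d$ has the unique form $v=\cos(\theta)\,w+\sin(\theta)\,e_{d+1}$ with $\theta\in[-\tfrac{\pi}{2},\tfrac{\pi}{2}]$ and $w\in S^{d-1}$ (well-defined whenever $|\theta|<\tfrac{\pi}{2}$). The antipodal action on $S^d$ becomes $(w,\theta)\mapsto(-w,-\theta)$, which is exactly why the height coordinate $\sin\theta_i$ transforms in the representation $V_{e_i}$. Given $(v_1,\dots,v_k)\in Y_{d,k}$ with parameters $(w_i,\theta_i)$, I define
\[
G(v_1,\dots,v_k)\ :=\ \Big(\big(\textstyle\prod_{i=1}^k\cos\theta_i\big)\cdot g(w_1,\dots,w_k),\ (\sin\theta_1,\dots,\sin\theta_k)\Big)\,\in\,W\oplus\bigoplus_{i=1}^k V_{e_i},
\]
and then normalize $G$ by its Euclidean norm to obtain a map into $S(U_k^{\oplus j})$.

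The main points to verify are: (a) continuity, which holds because the scalar $\prod_i\cos\theta_i$ vanishes precisely on the locus where some $w_i$ ceases to be defined, so the first component extends to $0$ across the poles; (b) non-vanishing, since if some $\theta_i\neq0$ the height vector is nonzero, while if all $\theta_i=0$ the scalar equals $1$ and $g(w_1,\dots,w_k)\in S(W)$ is nonzero; (c) $\Wk_k$-equivariance, which follows from the identity $(w_i,\theta_i)\mapsto(-w_i,-\theta_i)$ for $\varepsilon_i$ combined with the equivariance of $g$, the matching permutation actions on the tuples $(w_1,\dots,w_k)$ and on the height vector, and the $\Wk_k$-invariance of $\prod_i\cos\theta_i$.

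The main obstacle I expect is exactly the continuous extension across the polar locus where the equatorial coordinate $w_i$ is undefined; the scalar factor $\prod_i\cos\theta_i$ is engineered to resolve this issue, and without it a naive \emph{project-to-equator} construction would fail to be continuous on all of $Y_{d,k}$. Everything else reduces to the identification of the representation $\bigoplus_i V_{e_i}$ with the space of height coordinates, which is immediate from the definition of the one-dimensional representations $V_\alpha$.
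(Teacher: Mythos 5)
Your proof is correct. It rests on the same two observations as the paper's proof: the splitting $U_k^{\oplus j}\cong\big(U_k^{\oplus(j-1)}\oplus\bigoplus_{\alpha\in B}V_\alpha\big)\oplus\bigoplus_{\alpha\in A}V_\alpha$, and the fact that the summand $\bigoplus_{\alpha\in A}V_\alpha$ is realized geometrically by the height coordinates $(\sin\theta_1,\dots,\sin\theta_k)=(x_{d+1,1},\dots,x_{d+1,k})$, which cut out $Y_{d-1,k}$ inside $Y_{d,k}$ as their common zero set. The packaging differs slightly: the paper argues in the forward direction, taking an arbitrary equivariant map $f$ on $Y_{d-1,k}$ into the vector space, extending it \emph{somehow} equivariantly to $Y_{d,k}$ (an appeal to an equivariant Tietze-type extension that is left implicit), adding the height map $\Phi$, and locating a zero of the sum on $Y_{d-1,k}$; you instead prove the contrapositive and make the extension completely explicit via the damped suspension formula $\big(\prod_i\cos\theta_i\big)\,g(w_1,\dots,w_k)$. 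Your version buys a self-contained, formula-level construction with no extension theorem needed (the cosine factor handling continuity across the polar locus is exactly the right device); the paper's version buys brevity and applies verbatim with any extension. Both are sound.
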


\begin{proof}
There is an $\Wk_k$-equivariant map $\Phi\colon Y_{d,k} \longrightarrow \bigoplus_{\alpha \in A} V_\alpha$ with $\Phi^{-1}(0) = Y_{d-1,k}$, where $Y_{d-1,k} \subseteq Y_{d,k}$ is naturally identified with a product of equators. 
In fact, the space $Y_{d,k}$ contains all real $(d+1) \times k$ matrices whose columns have norm one. 
Now define 
\[
\Phi\colon Y_{d,k} \longrightarrow \bigoplus_{\alpha \in A} V_\alpha, \qquad
A \longmapsto (x_{d+1,1}, \dots, x_{d+1,k})
\]
as the map that evaluates the last row of a given matrix $A \in Y_{d,k}$.

Let $f \colon Y_{d-1,k} \longrightarrow U_k^{\oplus (j-1)} \oplus \bigoplus_{\alpha\in B} V_{\alpha}$ be an arbitrary equivariant map. We need to show that $f$ has a zero. 
Extend $f$ somehow to an equivariant map $F\colon Y_{d,k} \longrightarrow  U_k^{\oplus (j-1)} \oplus \bigoplus_{\alpha\in B} V_{\alpha}$. 
The map $F \oplus \Phi\colon Y_{d,k} \longrightarrow U_k^{\oplus j}$ has a zero $x_0$, otherwise it would induce an $\Wk_k$-equivariant map $Y_{d,k} \longrightarrow S(U_k^{\oplus j})$ by retraction. Since $\Phi(x_0) = 0$, we have $x_0 \in Y_{d-1,k}$ and it is a zero of the map $f$.
\end{proof}

By induction we obtain the following criterion.

\begin{theorem}
  Suppose there is no $\Wk_k$-equivariant map $Y_{d,k} \longrightarrow S(U_k^{\oplus j})$, 
  then $\Delta(j-m,k) \le d-m$ for all $1\leq m \leq  j-1$.  
\end{theorem}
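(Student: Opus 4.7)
The plan is to prove this by induction on $m$, but to make the induction run smoothly I will strengthen the inductive statement to carry an extra piece in the target representation. Writing $W = \bigoplus_{\alpha \in B} V_\alpha$, I will show by induction on $m \in \{0, \dots, j-1\}$ that
\[
\text{there is no $\Wk_k$-equivariant map } Y_{d-m,k} \longrightarrow S\bigl(U_k^{\oplus (j-m)} \oplus W^{\oplus m}\bigr). \tag{$P_m$}
\]
Once $(P_m)$ is established, the inclusion $S(U_k^{\oplus(j-m)}) \hookrightarrow S(U_k^{\oplus(j-m)} \oplus W^{\oplus m})$ is $\Wk_k$-equivariant, so any putative equivariant map with codomain the smaller sphere composes to a map with codomain the larger one; hence $(P_m)$ also rules out a $\Wk_k$-equivariant map $Y_{d-m,k}\to S(U_k^{\oplus(j-m)})$. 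The desired inequality $\Delta(j-m,k)\le d-m$ then follows immediately from Proposition~\ref{prop:CS/TM}(2).

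The base case $(P_0)$ is the hypothesis of the theorem. For the inductive step, I will exploit the isotypical splitting $U_k \cong \bigoplus_{\alpha \in A} V_\alpha \oplus W$ to rewrite the target in $(P_m)$ as
\[
U_k^{\oplus(j-m)} \oplus W^{\oplus m} \ \cong\ \bigl(U_k^{\oplus(j-m-1)} \oplus W^{\oplus(m+1)}\bigr) \oplus \bigoplus_{\alpha \in A} V_\alpha,
\]
thereby peeling off one copy of $\bigoplus_{\alpha\in A}V_\alpha$. I then apply Proposition~\ref{prop:Matschke-reduction_analog}; here the essential point is that the proof of that proposition, as written, depends only on the presence of a summand $\bigoplus_{\alpha \in A} V_\alpha$ on the target side (the map $\Phi\colon Y_{d,k}\to \bigoplus_{\alpha\in A}V_\alpha$ with $\Phi^{-1}(0)=Y_{d-1,k}$ is available regardless of the other summands). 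So the same argument, applied verbatim with the representation $V = U_k^{\oplus(j-m-1)} \oplus W^{\oplus(m+1)}$ playing the role of $U_k^{\oplus(j-1)} \oplus W$, yields the nonexistence of an equivariant map $Y_{d-m-1,k}\to S(U_k^{\oplus(j-m-1)} \oplus W^{\oplus(m+1)})$, which is precisely $(P_{m+1})$.

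The only real obstacle is ensuring that Proposition~\ref{prop:Matschke-reduction_analog} can indeed be iterated: a naive reading ties its conclusion to the codomain shape $U_k^{\oplus(j-1)} \oplus W$, which does not reappear as a codomain of the form $U_k^{\oplus j'}$ that one might want for a second application. I will therefore either observe explicitly that the proof of Proposition~\ref{prop:Matschke-reduction_analog} goes through with $U_k^{\oplus(j-1)} \oplus W$ replaced by any $\Wk_k$-representation $V$ (giving the generalized reduction: no equivariant map $Y_{d,k}\to S(V \oplus \bigoplus_{\alpha\in A}V_\alpha)$ implies no equivariant map $Y_{d-1,k}\to S(V)$), or equivalently, keep the extra $W$-summands bookkept in the target via $(P_m)$. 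Either formulation makes the induction automatic, and the conclusion $\Delta(j-m,k)\le d-m$ then follows for every $m \in \{0,\dots,j-1\}$.
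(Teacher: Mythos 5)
Your proof is correct and follows the paper's intended argument, which is simply to iterate Proposition~\ref{prop:Matschke-reduction_analog}. The strengthened inductive statement $(P_m)$ carrying the extra summands $\bigl(\bigoplus_{\alpha\in B}V_\alpha\bigr)^{\oplus m}$ is sound but not needed: since $S(U_k^{\oplus(j-1)})$ includes $\Wk_k$-equivariantly into $S\bigl(U_k^{\oplus(j-1)}\oplus\bigoplus_{\alpha\in B}V_\alpha\bigr)$, the proposition's conclusion already rules out equivariant maps $Y_{d-1,k}\to S(U_k^{\oplus(j-1)})$, so one may re-apply the proposition verbatim with $(d-1,j-1)$ in place of $(d,j)$ and avoid generalizing it at all.
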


\section{The Ramos conjecture for $\Delta(2,2)$}
\label{sec:hadwiger}

The first result on the  Gr\"{u}nbaum--Hadwiger--Ramos problem for more than one hyperplane is due to Hadwiger \cite{hadwiger1966}. 
He proved the following result.

\begin{theorem}[Hadwiger \cite{hadwiger1966}]
Let $A, B \subseteq \R^3$ be two compact sets with positive Lebesgue measure and denote by $\mu_A$ and $\mu_B$ the restriction of the Lebesgue measure to the respective sets. 
Then there is an arrangement of two affine hyperplanes that equipart the measures $\mu_A$ and $\mu_B$.
\end{theorem}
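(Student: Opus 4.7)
The plan is to apply the configuration space/test map scheme (Proposition~\ref{prop:CS/TM}) with $d=3$, $k=2$, $j=2$, and then finish with a Brouwer degree argument. The configuration space $Y_{3,2} = S^3\times S^3$ and the test space $U_2^{\oplus 2}$ both have real dimension six, so the $\Wk_2$-equivariant test map
\[
\phi_{\mathcal{M}}\colon S^3\times S^3 \longrightarrow U_2^{\oplus 2}
\]
is a map between manifolds of the same dimension, and a zero of $\phi_\mathcal{M}$ is precisely an equipartition of $\mathcal{M}=(\mu_A,\mu_B)$. It therefore suffices to show that $\phi_\mathcal{M}$ has a zero for every admissible $\mathcal{M}$.

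First I would exhibit a \emph{model} pair $\mathcal{M}_0$ --- for instance two small generic balls placed symmetrically with respect to the coordinate planes --- for which every equipartition by an ordered oriented pair $(H_1,H_2)$ can be enumerated explicitly and shown to be transverse. Assigning a local $\pm 1$ index to each such nondegenerate zero of $\phi_{\mathcal{M}_0}$ and summing over a fundamental domain for the $\Wk_2$-action yields the equivariant Brouwer degree $\deg_{\Wk_2}(\phi_{\mathcal{M}_0})$.

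Second, I would link an arbitrary admissible $\mathcal{M}$ to $\mathcal{M}_0$ through a continuous one-parameter family $\mathcal{M}_t$ of masses with compact connected support. By continuity of the measure of an orthant in the defining hyperplanes, the maps $\phi_{\mathcal{M}_t}$ vary continuously in $t$. If some $\phi_{\mathcal{M}_t}$ were everywhere nonzero, its normalization would define a $\Wk_2$-equivariant map $S^3\times S^3 \to S(U_2^{\oplus 2})$, and the equivariant degree would be homotopy invariant along the family and hence equal to $\deg_{\Wk_2}(\phi_{\mathcal{M}_0})$. So once the model degree is shown to be nonzero, $\phi_\mathcal{M}$ must vanish for every $\mathcal{M}$, yielding the desired equipartition.

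The main obstacle is the concrete computation of the equivariant degree. The mod-$2$ Fadell--Husseini calculation from the proof of Theorem~\ref{thm:upper-bound} is too coarse here: for $k=j=2$ the Dickson polynomial $p=u_1u_2(u_1+u_2)$ satisfies $p^2=u_1^4u_2^2+u_1^2u_2^4\in\langle u_1^4,u_2^4\rangle$, so the index comparison only yields $\Delta(2,2)\le 4$. The sharper statement at $d=3$ therefore requires either integer coefficients or the full $\Wk_2$-equivariance; the delicate step is to choose the model $\mathcal{M}_0$, track the signs of its nondegenerate equipartitions, and verify that the resulting sum does not cancel modulo $|\Wk_2|=8$.
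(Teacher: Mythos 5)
Your plan has a genuine gap, and it sits exactly at the point you defer to ``the delicate step.'' The invariant you propose to compute is not well defined as described. The total signed count of zeros of $\phi_{\mathcal{M}_0}$ over all of $S^3\times S^3$ is automatically zero, because the target $U_2^{\oplus 2}$ is contractible (the local degree at a regular value factors through $H_6(U_2^{\oplus 2})=0$); so the only candidate invariant is a count over a fundamental domain for the $\Wk_2=D_8$ action. But $D_8$ does \emph{not} act freely on $S^3\times S^3$: the locus $x_1=\pm x_2$ has nontrivial stabilizers, and any fundamental domain has boundary meeting it. Making a ``degree over a fundamental domain'' well defined, and homotopy invariant along your family $\mathcal{M}_t$, requires controlling the zeros on that singular set and proving an equivariant homotopy there (cf.\ Theorem~\ref{thm_generalized_equiv_hopf}, which replaces the free-action hypothesis of Lemma~\ref{thm:equivariant} by exactly such a condition). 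This is not a technicality: the failure to account for fixed points of the coordinate-permutation action on the boundary of the fundamental domain is precisely what invalidates Ramos's Lemma~6.2 (see Counterexample~\ref{ex_counterexample_lemma_6_2}) and the obstruction computations of Mani-Levitska et al. Your homotopy-invariance step is also circular as stated: you only get a sphere-valued map to compare degrees of after assuming $\phi_{\mathcal{M}_t}$ is nowhere zero, which is what you are trying to refute, and two different equivariant sphere-valued maps need not be equivariantly homotopic without the free-action (or singular-set) hypothesis. Finally, note that the equator-restriction device of Section~\ref{sec : D(2t+1,2)} is unavailable here, since for $j=k=2$, $d=3$ one has $k(d-1)=4\neq 5=(2^k-1)j-1$, so there is no natural codimension-$k$ submanifold of $(S^3)^2$ of the same dimension as $S(U_2^{\oplus 2})=S^5$.

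The paper avoids all of this by collapsing the problem to one dimension before doing any degree theory: one first shows (via a Fadell--Husseini/Bourgin--Yang argument) that the space $C\subset S^3$ of hyperplanes bisecting \emph{both} masses contains a $\Z/2$-invariant path component, hence receives a $\Z/2$-equivariant map from $S^1$; then the orthant-measure map gives $\Psi\colon S^1\times S^1\to S^1$, which is constant (degree $0$) on the diagonal, while on a nearby $\Z/4$-invariant circle $\Gamma$ on which the $\Z/4$-action \emph{is} free it must have degree $\equiv 2 \pmod 4$ by the equivariant Hopf theorem --- a contradiction. If you want to salvage your global approach, you would need to (i) choose $\mathcal{M}_0$ so that $\phi_{\mathcal{M}_0}$ has no zeros on the non-free part and construct the equivariant homotopy there explicitly, and (ii) actually carry out the sign computation; as written, neither is done.
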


We prove, using a simple degree-theoretic argument, that any two masses in~$\R^3$ 
can be equiparted by two affine hyperplanes, so $\Delta(2,2) \le 3$.
For this we use that equivariant maps have restricted homotopy types. 

\begin{lemma}[Equivariant Hopf Theorem~{\cite[Thm.\,II.4.11]{tom1987transformation}}]
\label{thm:equivariant}
Let $G$ be a finite group that acts on $S^d$ and acts freely on a closed oriented $d$-manifold $M$. 
Then for any two $G$-equivariant maps $\Phi, \Psi \colon M \longrightarrow S^d$ 
\[
\deg \Phi \equiv \deg \Psi \mod |G|.
\]
\end{lemma}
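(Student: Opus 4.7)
The plan is to invoke standard equivariant obstruction theory, exploiting that the free $G$-action on $M$ collapses the equivariant calculation onto the orbit manifold $M/G$.

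First I would equip $M$ with a $G$-CW structure (guaranteed by equivariant triangulation of the smooth free action, or simply by lifting a CW structure from the closed $d$-manifold $M/G$). Freeness forces every cell to sit in a $G$-orbit of exactly $|G|$ cells, so the projection $p\colon M\to M/G$ becomes a cellular $|G|$-fold covering with $p_*[M]=|G|\cdot[M/G]$ for suitably chosen compatible orientations.

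Second, since $S^d$ is $(d-1)$-connected, $G$-equivariant obstruction theory lets me $G$-equivariantly homotope $\Psi$ until it agrees with $\Phi$ on the $(d-1)$-skeleton $M^{(d-1)}$. The residual data is encoded by the standard difference cochain $d(\Phi,\Psi)\in C^d_G(M;\pi_d(S^d))$, which by freeness of the action descends to a cochain $\delta\in C^d(M/G;\pi_d(S^d))$ with appropriately twisted $\Z$-coefficients. The classical Hopf-type formula then yields
\[
\deg\Phi-\deg\Psi \;=\; \bigl\langle d(\Phi,\Psi),[M]\bigr\rangle \;=\; \bigl\langle \delta, p_*[M]\bigr\rangle \;=\; |G|\cdot\bigl\langle \delta,[M/G]\bigr\rangle,
\]
proving the claimed congruence modulo~$|G|$.

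The main obstacle is the orientation bookkeeping: the coefficient system $\pi_d(S^d)\cong\Z$ must be regarded as twisted by the product of the orientation characters of the $G$-actions on $M$ and on $S^d$, and one must verify that the resulting pairing genuinely produces an integer multiple of $|G|$ rather than being killed by orientation-reversing symmetries. This subtlety is precisely what is handled in tom Dieck's treatment \cite[Thm.\,II.4.11]{tom1987transformation} that the lemma cites, which is why we invoke the result rather than re-deriving it here.
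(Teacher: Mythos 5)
The paper offers no proof of this lemma: it is stated purely as a citation of tom Dieck's Theorem II.4.11, so there is no in-paper argument to compare yours against. Your sketch is a correct outline of the standard proof --- free $G$-CW structure on $M$, equivariant homotopy to agreement on the $(d-1)$-skeleton using $(d-1)$-connectivity of $S^d$, and evaluation of the equivariant difference cochain on the fundamental class, where summing over a $G$-orbit of $d$-cells produces the factor $\sum_{g\in G}(\epsilon w)(g)$ (with $\epsilon$ and $w$ the orientation characters of the actions on $M$ and $S^d$), which equals $|G|$ or $0$ and hence is divisible by $|G|$ in either case. The one imprecision is the line $p_*[M]=|G|\cdot[M/G]$, which presupposes that $G$ acts orientation-preservingly on $M$ so that $M/G$ carries a compatible orientation; but you correctly single out this orientation bookkeeping as the delicate point and defer it to the cited reference, which is exactly what the paper itself does.
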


First we consider measures with continuous densities that have connected support and restrict to the configuration space of pairs of affine hyperplanes that simultaneously bisect both measures. 
In a second step we find a point in this configuration space equiparting the measures.  

\begin{lemma}
Let $\mu_1$ and $\mu_2$ be masses on $\R^3$.
The space $C \subset S^3$ of all oriented affine hyperplanes that simultaneously bisect both $\mu_1$ and $\mu_2$ admits a $\Z/2$-equivariant map $S^1\longrightarrow C$ where the action on the sphere $S^1$ is antipodal.
\end{lemma}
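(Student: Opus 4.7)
The plan is to identify $C$ with the zero set of an odd continuous function on $S^2$, and then to produce the equivariant circle inside this zero set by combining a Borsuk--Ulam index argument with a smooth approximation.

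First I would parametrize the $\mu_1$-bisectors by their normal direction: for each $v \in S^2$ the continuity hypothesis on $\mu_1$ gives a unique oriented affine hyperplane $\Psi(v) \in S^3$ with outward normal $v$ bisecting $\mu_1$, and the assignment $\Psi \colon S^2 \to S^3$ is continuous, injective, and $\Z/2$-equivariant (reversing $v$ reverses the orientation of the bisecting hyperplane). Setting $g(v) := \mu_2(\Psi(v)^0) - \tfrac12$ yields an odd continuous function $g\colon S^2 \to \R$, and $\Psi$ restricts to a $\Z/2$-equivariant homeomorphism between $Z := g^{-1}(0)$ and $C$. So the task reduces to exhibiting a $\Z/2$-equivariant map $S^1 \to Z \subseteq S^2$.

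Next I would rule out that $Z$ splits equivariantly into two antipodal closed pieces. If $Z = Z_+ \sqcup Z_-$ were a disjoint union of closed sets with $Z_- = -Z_+$, equivariant Tietze extension (apply ordinary Tietze to get $\tau$ separating $Z_\pm$, then antisymmetrize) would provide an odd continuous $\sigma\colon S^2 \to \R$ with $\sigma|_{Z_\pm} = \pm 1$, whence $(g, \sigma) \colon S^2 \to \R^2$ would be an odd continuous map without a zero, contradicting Borsuk--Ulam. Hence at least one connected component $K$ of $Z$ must be mapped to itself by the antipodal involution.

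The main obstacle is to upgrade bare invariance of $K$ to an honest equivariant parametrization $S^1 \to K$. Assuming first that $g$ is smooth with $0$ a regular value (which can be arranged by an arbitrarily small perturbation of the masses), $Z$ is a compact $\Z/2$-invariant $1$-submanifold of $S^2$; each component is a smooth embedded circle, and the class of $Z/\Z_2$ in $H_1(\RP^2; \Z/2)$ is Poincar\'e dual to $w_1$ of the tautological line bundle and hence non-zero. Consequently an odd number of components of $Z$ are antipode-invariant smooth circles, any one of which admits a direct equivariant parametrization by $S^1$. For a general continuous $g$ I would uniformly approximate by smooth odd $g_n$ with $0$ a regular value, take equivariant parametrizations $\gamma_n\colon S^1 \to g_n^{-1}(0)$ arranged so that $\gamma_n(0)$ and $\gamma_n(\tfrac12)$ form an antipodal pair, reparametrize them to be uniformly Lipschitz on $S^1$, and extract a subsequential uniform limit via Arzel\`a--Ascoli; the limit is $\Z/2$-equivariant, is non-constant because it still sends $0$ and $\tfrac12$ to antipodes, and lands in $Z$ by the uniform convergence $g_n \to g$.
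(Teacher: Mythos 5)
Your first two steps are sound and in fact more elementary than the paper's: the paper obtains the invariant component from the Borsuk--Ulam--Bourgin--Yang inequality for the Fadell--Husseini index, whereas your Tietze-plus-Borsuk--Ulam separation argument achieves the same thing with bare hands (modulo the standard point that passing from ``no equivariant closed separation'' to ``some component is invariant'' uses that components and quasi-components coincide in compact Hausdorff spaces --- the paper is no more careful here than you are). The genuine problem is in your final step. The uniform Lipschitz reparametrization of the $\gamma_n$ is not available. The antipodal map is an isometry of $S^2$, so on an antipode-invariant circle $K_n\subseteq g_n^{-1}(0)$ of length $L_n$ it acts, in the arc-length parametrization, as the shift by $L_n/2$; consequently \emph{every} $\Z/2$-equivariant map $S^1\to K_n$ must traverse, over a semicircle of the domain, an arc of $K_n$ joining a point to its antipode, and every such arc has length exactly $L_n/2$. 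Hence any equivariant parametrization has Lipschitz constant at least $L_n/(2\pi)$. Uniform convergence $g_n\to g$ gives no control whatsoever on the geometry of $g_n^{-1}(0)$, and $L_n$ can tend to infinity (the zero set of $g$ can be a complicated continuum that smooth approximations must shadow with increasingly long curves). Without equicontinuity, Arzel\`a--Ascoli does not apply; and a weaker limit (say, Hausdorff convergence of the images) only returns a compact connected invariant subset of $Z$, which is exactly what you already had after step two. So the passage from an invariant \emph{connected} component to an equivariant circle is not established.

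The paper avoids this entirely by working with \emph{path}-components: once a $\Z/2$-invariant path-component $C_j$ is found, it contains some $x$ together with $-x$, one joins them by an injective path inside $C_j$, and extends to $S^1$ by the symmetry $t\mapsto -t$ on the domain; no smoothing or limit is needed. (The map need not be injective, but the lemma only asks for an equivariant map.) The price is that the pairing argument must be run at the level of path-components, which is what the paper does. If you want to keep your elementary separation argument, the cleanest repair is to run it for path-components rather than connected components and then finish as the paper does; your homological observation that $[Z_n/(\Z/2)]$ is dual to $w_1$ of the tautological bundle is correct and pleasant, but it lives on the smooth approximations and you have no mechanism for transporting an equivariant circle from $g_n^{-1}(0)$ into $g^{-1}(0)$ in the limit.
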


\begin{proof}
The sphere $S^3$ parametrizes all oriented affine hyperplanes in $\R^3$ including the ones at infinity.
Consider the following subspace of $S^3$:
\[
S = \{u \in S^3 \: : \: \mu_1(H_u^0) =\tfrac{1}{2} \}.
\]
The space $S$ is homeomorphic to a $2$-sphere that is invariant with respect to 
the antipodal action on $S^3$ (that is, with respect to change of orientation
of the hyperplane): Any normal vector in $\R^3$ determines a unique bisecting affine hyperplane for $\mu_1$.
For this we need that $\mu_1$ has connected support.

Let us define a map $\phi\colon S\longrightarrow \R$ by $u\longmapsto \mu_2(H_u^0)-\mu_2(H_u^1)$. 
The map $\phi$ is $\Z/2$-equivariant where the action on both spaces is antipodal.
Set $C=\phi^{-1}(0)=\bigcup_{i\in I}C_i$ where the $C_i$ are the path-components of $C$. 
First, we prove that there exists a $\Z/2$-invariant path-component $C_j$ of $C$.

According to the general Borsuk--Ulam--Bourgin--Yang Theorem~\cite[Sec.\,6.1]{b_l_z_2012}
\begin{equation}
\label{eq:BUBY}
\ind_{\Z/2}(C;\F_2)\cdot \ind_{\Z/2}(\R{\setminus}\{0\};\F_2)\subseteq\ind_{\Z/2}(S;\F_2).
\end{equation}
Let the cohomology of $\Z/2$ be denoted by $H^*(\Z/2;\F_2)=\F_2[t]$, where $\deg(t)=1$.
Using \cite[Prop.\,3.13]{blagojevic2011} we get
\[
\ind_{\Z/2}(\R{\setminus}\{0\};\F_2)=\ind_{\Z/2}(S^0;\F_2)=\langle t\rangle
\quad\text{and}\quad
\ind_{\Z/2}(S;\F_2)=\langle t^3\rangle.
\]
If $C$ did not have a path-component that the $\Z/2$-action maps to itself, then the path-components of $C$ would come in pairs
that the group action would exchange. 
Consequently, there exists a $\Z/2$-equivariant map $C\longrightarrow S^0$ implying that $\ind_{\Z/2}(C;\F_2)=\langle t\rangle$.
This contradicts \eqref{eq:BUBY}, and so $C$ contains a path-component
that the $\Z/2$-action maps to itself.

Let $C_j$ be a $\Z/2$-invariant path-component of $C$.
We prove that there exists a $\Z/2$-equivariant map $S^1\longrightarrow C_j$ where the action on $S^1$ is antipodal.
Connect two antipodal points in $C_j$ via an injective path and extend to $S^1$ via the $\Z/2$-symmetry.
\end{proof}

\begin{theorem}
\label{thm:hadwiger}
	$\Delta(2,2) = 3$.
\end{theorem}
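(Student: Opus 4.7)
The lower bound $\Delta(2,2) \ge 3$ is already supplied by Ramos (Theorem~\ref{thm:lower-bound}), so the plan is to establish the matching upper bound $\Delta(2,2) \le 3$. Given two masses $\mu_1, \mu_2$ on $\R^3$, the previous lemma produces a $\Z/2$-equivariant map $\gamma \colon S^1 \to C$, where $C \subseteq S^3$ is the subspace of oriented affine hyperplanes that simultaneously bisect both $\mu_i$, and the $\Z/2$-action on $C$ is orientation reversal. Once both members of a pair $(\gamma(s), \gamma(t))$ already bisect each $\mu_i$, an equipartition amounts to the two additional scalar conditions $\mu_i\big(\gamma(s)^0 \cap \gamma(t)^0\big) = \tfrac14$ for $i=1,2$, since the bisection relations then propagate equality to all four orthants. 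Accordingly, I would introduce the test map
\[
F = (f_1,f_2)\colon S^1 \times S^1 \longrightarrow \R^2, \qquad f_i(s,t) = \mu_i\big(\gamma(s)^0 \cap \gamma(t)^0\big) - \tfrac14,
\]
and produce a zero of $F$ by a degree-theoretic argument on the torus.

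The argument will hinge on four structural features of $F$. First, $F$ is odd in each variable: $F(-s,t) = -F(s,t)$ and $F(s,-t) = -F(s,t)$, because flipping the orientation of a bisecting hyperplane swaps its halfspaces, and bisection converts the swapped orthant's $\mu_i$-measure to $\tfrac12$ minus the original. Second, $F(s,s) \equiv (\tfrac14, \tfrac14)$ because $\gamma(s)^0 \cap \gamma(s)^0 = \gamma(s)^0$ has $\mu_i$-measure $\tfrac12$. Third, $F(s,-s) \equiv (-\tfrac14, -\tfrac14)$ because $\gamma(s)^0 \cap \gamma(-s)^0 = \gamma(s)^0 \cap \gamma(s)^1$ lies in a single hyperplane and thus has $\mu_i$-measure $0$.

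Suppose for contradiction that $F$ has no zero. Radial projection then yields a map $T^2 \to S^1$ whose cohomology class in $H^1(T^2;\Z) = \Z\alpha \oplus \Z\beta$ has the form $p\alpha + q\beta$. By oddness in each variable and the classical Borsuk--Ulam theorem applied to the restrictions of $F$ to a horizontal and a vertical circle, both $p$ and $q$ are odd. On the other hand, the constancy of $F$ on the diagonal $\{(s,s)\}$ and the antidiagonal $\{(s,-s)\}$, whose $H_1$-classes in $T^2$ are $(1,1)$ and $(1,-1)$ respectively, forces $p+q=0$ and $p-q=0$. Hence $p=q=0$, contradicting oddness. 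Therefore $F$ must vanish somewhere, producing the desired equipartition.

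The hard part is not conceptual but bookkeeping: carefully verifying the oddness and the two constancy statements above from the bisection property, and running the torus cohomology/degree calculation. Once these ingredients are assembled, the conclusion rests on the same Borsuk--Ulam mechanism that powers the classical ham-sandwich theorem.
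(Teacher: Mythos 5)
Your setup coincides with the paper's up to the final degree computation, and the oddness and constancy claims you list are all correct. But the degree bookkeeping at the end has a fatal error: the ``antidiagonal'' $\{(s,-s)\}$ does \emph{not} represent the class $(1,-1)$ in $H_1(T^2;\Z)$. The antipodal map $s\mapsto -s$ on $S^1$ is rotation by $\pi$, which has degree $+1$, so $\{(s,-s)\}$ is just a translate of the diagonal and also represents $(1,1)$. (The class $(1,-1)$ is carried by $\{(s,\bar s)\}$ with $\bar s$ complex conjugation, which is not what your $\Z/2$-action gives you.) Consequently both constancy statements yield the single equation $p+q=0$, and the full list of constraints you extract --- $p$ odd, $q$ odd, $p+q=0$ --- is consistent, e.g.\ $p=1$, $q=-1$. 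Indeed the map $G(s,t)=st^{-1}$ (unit complex numbers) is odd in each variable, constant on both the diagonal and the antidiagonal, and nowhere zero, so no contradiction can be derived from these properties alone. The mod~$2$ information supplied by Borsuk--Ulam is genuinely insufficient here.

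The paper closes exactly this gap by extracting mod~$4$ information. One checks that $\Phi(v,-u)=-\Phi(u,v)$, so the normalized map $\Psi$ is equivariant for the free $\Z/4$-action $t\cdot(u,v)=(v,-u)$ on $S^1\times S^1$, with the generator acting antipodally on the target $S^1$. Restricting to the $\Z/4$-invariant circle $\Gamma=\{(u,e^{i\pi/2}u)\}$, which is homotopic to the diagonal and hence carries degree $0$, the equivariant Hopf theorem (Lemma~\ref{thm:equivariant}) forces $\deg\Psi|_\Gamma\equiv\deg(z\mapsto z^2)=2\pmod 4$ --- a contradiction. Note that this uses the symmetry $\mu_i(H_u^+\cap H_v^+)=\mu_i(H_v^+\cap H_u^+)$ of the test map in addition to its oddness; your argument never invokes that symmetry, which is another sign that it cannot suffice. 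If you want to salvage your route, you must replace the $H^1(T^2;\Z)$ computation by this $\Z/4$-equivariant degree argument (or some equivalent mod~$4$ refinement).
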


\begin{proof}
Let $\mu_1$ and $\mu_2$ be masses on $\R^3$.
The subspace $C \subseteq S^3$ of oriented hyperplanes that simultaneously bisect both 
masses admits a $\Z/2$-equivariant map $i\colon S^1\longrightarrow C$, where the action on the sphere $S^1$ is antipodal. 

Consider the composition $\Phi\colon S^1\times S^1 \longrightarrow   C \times C \longrightarrow \R^2$ defined by
\[
(u,v) \longmapsto (\mu_1\big(H_{i(u)}^0 \cap H_{i(v)}^0\big)-\tfrac{1}{4}, \mu_2\big(H_{i(u)}^0 \cap H_{i(v)}^0\big)-\tfrac{1}{4}).
\]
Assume that $\mu_1$ and $\mu_2$ do not have any equipartition by two hyperplanes in~$\R^3$.
Consequently $0\notin \Phi(S^1\times S^1)$, 
since the zeros of the map $\Phi$ are pairs of hyperplanes that equipart $\mu_1$ and $\mu_2$.
Now $\Phi$ composed with radial retraction $\R^2{\setminus}\{0\}\longrightarrow S^1$ induces the map $\Psi\colon  S^1 \times S^1 \longrightarrow S^1$.
Notice that $\Psi(u,u) = \big(\frac{\sqrt{2}}{2}, \frac{\sqrt{2}}{2}\big)$ for each $u \in S^1$. 
Thus  the map $\Psi|_D \colon D \longrightarrow S^1$, where $D = \{(u,u) : u \in S^1\}$ is the diagonal, is constant and so has degree $0$.
 
Let $t$ be a generator of $\Z/4$.
Then $t \cdot (u,v) = (v,-u)$ defines a free $\Z/4$-action on $S^1 \times S^1$. 
The circle $\Gamma = \{(u,e^{i\frac{\pi}{2}}\cdot u) : u \in S^1\} \subseteq S^1 \times S^1$ is a $\Z/4$-invariant subspace that is homotopic to the diagonal $D$ in $S^1 \times S^1$. 
Thus $\deg \Psi|_\Gamma = \deg \Psi|_D = 0$. 

On the other hand, the map $\Psi|_\Gamma : \Gamma \longrightarrow S^1$ is $\Z/4$-equivariant with the generator $t$ acting antipodally on the codomain sphere $S^1$. 
All such maps have the same degree modulo $4$ by Lemma~\ref{thm:equivariant} and $z \mapsto z^2$ is such a map of degree $2$. This yields a contradiction, and so the map $\Phi$ has a zero.
\end{proof}

The reduction argument \eqref{eq:reduction_Ramos} applied to the result of the previous theorem in combination with Ramos' lower bound yields the following consequence.

\begin{corollary}[Hadwiger \cite{hadwiger1966}]
$\Delta(1,3)=3$.
\end{corollary}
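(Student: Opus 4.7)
The plan is to combine the elementary reduction \eqref{eq:reduction_Ramos}, namely $\Delta(j,k) \le \Delta(2j,k-1)$, with Theorem~\ref{thm:hadwiger} to obtain the upper bound $\Delta(1,3) \le \Delta(2,2) = 3$, and to match this with Ramos' lower bound.

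For the reduction, given a single mass $\mu$ on $\R^3$ I would first select any affine hyperplane $H_1$ bisecting $\mu$; such a hyperplane exists because for each fixed unit normal vector $v$ there is a (unique, continuously varying) level that halves $\mu$. The mass $\mu$ then splits into two half-masses $\mu_1$ and $\mu_2$ supported on $H_1^0$ and $H_1^1$ respectively. Applying Theorem~\ref{thm:hadwiger} to the pair $(\mu_1,\mu_2)$ yields two further hyperplanes $H_2, H_3 \subset \R^3$ such that every orthant of the two-hyperplane arrangement $(H_2,H_3)$ carries exactly a quarter of each $\mu_i$. Every orthant $\mathcal{O}^{\mathcal{H}}_{\alpha}$ of the full arrangement $\mathcal{H}=(H_1,H_2,H_3)$ lies on one definite side of $H_1$ and is the intersection of that side with an orthant of $(H_2,H_3)$, so it captures $\tfrac14 \mu_i(\R^3) = \tfrac18 \mu(\R^3)$. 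Thus $\mathcal{H}$ equiparts $\mu$, proving $\Delta(1,3)\le 3$.

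The matching lower bound comes directly from Theorem~\ref{thm:lower-bound} with $j=1$, $k=3$: $\Delta(1,3) \ge \lceil \tfrac{2^3-1}{3} \rceil = \lceil 7/3 \rceil = 3$. Together the two bounds give $\Delta(1,3)=3$.

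The statement is essentially a formal consequence of Theorem~\ref{thm:hadwiger}, so there is no genuine obstacle. The only point requiring mild care is that after cutting with $H_1$, the restricted measures $\mu_1$ and $\mu_2$ inherit compact support but not necessarily connected support; however, this is absorbed by the standard approximation procedure discussed in Section~\ref{sec : Set-up and terminology}, which allows us to assume that Theorem~\ref{thm:hadwiger} is applicable to $(\mu_1,\mu_2)$ and then pass to the limit.
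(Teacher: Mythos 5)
Your argument is correct and is exactly the paper's proof: the upper bound comes from the reduction \eqref{eq:reduction_Ramos} applied to Theorem~\ref{thm:hadwiger} (giving $\Delta(1,3)\le\Delta(2,2)=3$), and the matching lower bound is Ramos' bound $\lceil 7/3\rceil=3$. You merely spell out the reduction and the connected-support caveat explicitly, which the paper leaves implicit.
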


\section{The Ramos conjecture for $\Delta(2^t+1,2)$}
\label{sec : D(2t+1,2)}

In this section we prove the following theorem, establishing a family of exact values for the function $\Delta(j,2)$ in the case of two hyperplanes.
It is a nontrivial 
instance of the Ramos conjecture that was previously claimed
by \v{Z}ivaljevi\'c \cite[Thm.\,2.1]{zivaljevic2011equipartitions}~\cite[Thm.\,2.1]{zivaljevic2011equipartitions-published},
but the proof given there is not complete; see Section \ref{sec:further_gaps}.

\begin{theorem}\label{thm:Delta(2^t+1,2)} 
 $\Delta(2^t + 1, 2) = 3\cdot 2^{t-1}+2$ for any $t \ge 2$.
\end{theorem}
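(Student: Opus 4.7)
The lower bound $\Delta(2^t+1,2)\ge 3\cdot 2^{t-1}+2$ is immediate from Theorem~\ref{thm:lower-bound} (Ramos), since $\lceil\tfrac{3(2^t+1)}{2}\rceil=3\cdot 2^{t-1}+2$. For the upper bound, set $d:=3\cdot 2^{t-1}+2$ and $j:=2^t+1$, and let $\mathcal{M}=(\mu_1,\ldots,\mu_j)$ be $j$ smooth (and, by approximation, generic) masses on $\R^d$. By Proposition~\ref{prop:CS/TM} it suffices to show that the $\Wk_2$-equivariant test map $\phi_\mathcal{M}\colon Y_{d,2}\to U_2^{\oplus j}$ has a zero. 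The plan follows the two-step strategy of the proof of Hadwiger's Theorem~\ref{thm:hadwiger}: a bisection reduction followed by a degree argument via the equivariant Hopf theorem.

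For Step~1, decompose $U_2\cong W\oplus T$ as $\Wk_2$-representations, with $W=V_{(1,0)}\oplus V_{(0,1)}$ the $2$-dimensional irreducible swap representation and $T=V_{(1,1)}$ the $1$-dimensional representation on which $\Sym_2$ and the diagonal $(1,1)\in(\Z/2)^2$ act trivially. The $W^{\oplus j}$-part of $\phi_\mathcal{M}(v_1,v_2)$ vanishes precisely when each of $H_{v_1},H_{v_2}$ bisects every mass. Let $B\colon S^d\to\R^j$, $v\mapsto(\mu_i(H_v^0)-\tfrac12)_i$, which is $\Z/2$-equivariant under antipodal actions, and write $S:=B^{-1}(0)$. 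A Borsuk--Ulam--Yang index argument gives $\ind_{\Z/2}(S;\F_2)\ge d-j=2^{t-1}+1$, and genericity of $\mathcal{M}$ makes $S$ a closed $\Z/2$-invariant manifold of exactly this dimension. Passing to an appropriate $\Z/2$-invariant component and applying equivariant obstruction theory yields a $\Z/2$-equivariant map $\iota\colon S^{d-j}\to S$. Then $\iota\times\iota\colon S^{d-j}\times S^{d-j}\to S\times S\subseteq Y_{d,2}$ is $\Wk_2$-equivariant, and on $S\times S$ the test map $\phi_\mathcal{M}$ lands in the $T^{\oplus j}$-summand, giving the $\Wk_2$-equivariant composition
\[
\Psi\colon S^{d-j}\times S^{d-j}\longrightarrow T^{\oplus j}\cong\R^j,
\]
whose zeros correspond to equipartitions of $\mathcal{M}$ by two hyperplanes.

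For Step~2, assume $\Psi$ is nowhere zero and let $\bar\Psi\colon S^{d-j}\times S^{d-j}\to S^{j-1}$ be its radial projection. The source has dimension $2^t+2$ while $S^{j-1}$ has dimension $2^t$, so I restrict to the $\Wk_2$-invariant subspace
\[
\Gamma:=\{(u,v)\in S^{d-j}\times S^{d-j}\colon\langle u,e_0\rangle=\langle v,e_0\rangle=0\}\cong S^{d-j-1}\times S^{d-j-1}
\]
for a fixed auxiliary vector $e_0$; then $\dim\Gamma=j-1=2^t$ matches the target. The cyclic subgroup $G=\langle t\rangle\cong\Z/4$ with $t=((0,1),\sigma)\in\Wk_2$ acts freely on $\Gamma$ by $t\cdot(u,v)=(v,-u)$ (generalising Hadwiger's twisted $\Z/4$-rotation), while on $T^{\oplus j}$ it acts via the quotient $\Z/4\twoheadrightarrow\Z/2$ antipodally (since $t$ acts as $-1$ on $T$ and $t^2=((1,1),\id)$ acts trivially). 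The equivariant Hopf theorem (Lemma~\ref{thm:equivariant}) then determines $\deg\bar\Psi|_\Gamma$ modulo~$4$. On one hand, a $G$-equivariant homotopy to a diagonal-type subspace, combining $\Sym_2$-triviality on $T^{\oplus j}$ with the dimensional inequality $d-j<j-1$, forces $\deg\bar\Psi|_\Gamma\equiv 0\pmod 4$. On the other hand, an explicit $G$-equivariant comparison map $\Gamma\to S^{j-1}$ built from quadratic characters adapted to the $T^{\oplus j}$ representation (in the spirit of Hadwiger's $z\mapsto z^2$) realises degree $\equiv 2\pmod 4$. This is a contradiction, so $\Psi$ must have a zero.

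The main obstacle is Step~2, specifically the degree calculation. In Hadwiger's case the excess dimension is~$1$ and a single twisted diagonal circle already realises the target dimension; here the excess is~$2$, forcing a codimension-$2$ subspace $\Gamma$ and necessitating a more delicate analysis both of the null-homotopy (which cannot rely on a lower-dimensional diagonal alone but must combine symmetry with a precise dimension count) and of the explicit comparison map (whose existence and non-trivial degree modulo $4$ are sensitive to the $\Wk_2$-representation structure of $T^{\oplus j}$ under restriction to $G$). A secondary technical point is the construction of $\iota\colon S^{d-j}\to S$ in Step~1, which requires both genericity of $\mathcal{M}$ so that $S$ is a manifold of the expected dimension, and an equivariant obstruction argument producing the map into a suitably chosen invariant component of $S$ — neither issue arising in Hadwiger's one-dimensional case.
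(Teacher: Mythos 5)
Your overall strategy (reduce dimension, then compare degrees of equivariant maps via a Hopf-type theorem) is in the right spirit, but both of your key steps have genuine gaps, and the second one is where essentially all of the work of the actual proof lives. In Step~1, the passage from the index bound $\ind_{\Z/2}(S;\F_2)$ to a $\Z/2$-equivariant map $\iota\colon S^{d-j}\to S$ is invalid: the Fadell--Husseini index obstructs equivariant maps \emph{out of} $S$ into low-dimensional spheres, whereas you need a map \emph{into} $S$, i.e.\ a lower bound on the coindex, and index does not bound coindex from below. In the $\Delta(2,2)$ case this is papered over because $d-j=1$: one connects two antipodal points of an invariant path-component by a path. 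For $d-j=2^{t-1}+1\ge 3$ there is no such hands-on construction, and ``equivariant obstruction theory yields $\iota$'' is unsupported --- the obstructions live in $H^{*}(\RP^{d-j};\pi_{*-1}(S))$, and you know nothing about the connectivity or homotopy groups of $S$ beyond its index. This coindex-versus-index confusion is exactly the kind of error the paper is at pains to document elsewhere in the literature.

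In Step~2, both degree claims are asserted rather than proved, and the crucial one --- an explicit $\Z/4$-equivariant map $\Gamma\to S^{j-1}$ of degree $\equiv 2\pmod 4$ ``built from quadratic characters'' --- is not constructed; it is as hard as the original problem. There is also a quantitative warning sign that a mod~$4$ argument cannot suffice: in the paper's proof the nonzero degree that arises (from the test map of $j$ masses on the moment curve, restricted to linear hyperplanes) is $2\binom{j}{(j-1)/2}$ with $j=2^t+1$, and by Kummer's theorem this equals $4$ times an odd number, so it is detected modulo $|\Wk_2|=8$ but \emph{vanishes} modulo~$4$. The paper therefore keeps the full $D_8$-symmetry: it works on $Y_{d,2}=S^d\times S^d$ with $2d=3j+1$, restricts the test map of a concrete moment-curve configuration to $Y_{d-1,2}=S^{d-1}\times S^{d-1}$, computes its degree as $2\binom{j}{(j-1)/2}$ by counting equipartitions and carefully verifying that all local degrees agree (this uses the evenness of $d$, a symmetric-difference parity argument, and an orientation analysis of $\varepsilon_2$), and then invokes the Kushkuley--Balanov generalization of the equivariant Hopf theorem (needed because the $\Wk_2$-action on $Y_{d-1,2}$ is not free) to compare with the degree-zero map coming from a hypothetical counterexample. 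None of this geometric degree computation, nor the mod-$8$ comparison it feeds, appears in your proposal.
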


Using the reduction of \eqref{eq:reduction_Matschke} we obtain from this that
\[
    \Delta(2^t, 2) \le 3\cdot 2^{t-1}+1\quad \textrm{for any }t \ge 2.
\]
as listed in Table \ref{table:summary}.

The rough outline of the proof of Theorem \ref{thm:Delta(2^t+1,2)} is as follows: 
For $d=3\cdot 2^{t-1}+2$ consider $j$ masses in $\R^d$ that do not admit an equipartition by two affine hyperplanes.
They induce a $D_8$-equivariant test map $\psi\colon S^d \times S^d \longrightarrow S^{2d-2}$. 
The restricted map $\tildepsi\colon S^{d-1} \times S^{d-1} \longrightarrow S^{2d-2}$ has degree zero since it factors through $S^d \times S^d$. We will then consider the test map $\phi$ for $j$ specific masses and compute the degree of the restricted map $\tildephi$ on $S^{d-1} \times S^{d-1}$ by counting the zeros of $\phi$ on $B^d \times S^{d-1}$ (where $B^d$ is a hemisphere of~$S^d$) with sign and multiplicity. This is done by counting equipartitions for this specific set of measures. 
The maps $\tildepsi$ and $\tildephi$ need not be homotopic and so their degrees might not coincide. This is remedied by exploiting the equivariance of both maps, yielding $\deg\tildepsi\equiv\deg\tildephi \mod 8$, which gives a contradiction if $j-1$ is
a power of two, $j\ge5$.

\subsection{Equipartitions restrict degrees of equivariant maps}

In order to show that $\Delta(j,k) \le d$ we use Proposition~\ref{prop:CS/TM}(2)
and prove that there is no 
$\Wk_k$-equivariant map $Y_{d,k} \longrightarrow S(U_{k}^{\oplus j})$. 

\begin{lemma}
\label{lemma : a-1}
Let $\Delta(j,k) > d$ for $k(d-1) = (2^k-1)j-1$ and assume that $k(d-1)$ is 
not divisible by $d$. 
Then there is an $\Wk_k$-equivariant map $\tildepsi\colon Y_{d-1,k} \longrightarrow S(U_{k}^{\oplus j})$ with $\deg \tildepsi = 0$. 
\end{lemma}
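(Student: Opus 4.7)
The plan is to produce $\tildepsi$ by restricting a test map coming from a counterexample collection of masses, and then force $\deg\tildepsi=0$ automatically via a Künneth computation in the ambient product of spheres.

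\emph{Setup.} From the assumption $\Delta(j,k)>d$ I obtain a collection $\mathcal{M}=(\mu_1,\dots,\mu_j)$ of $j$ masses in $\R^d$ that admits no equipartition by $k$ affine hyperplanes. By Proposition~\ref{prop:CS/TM}, the associated $\Wk_k$-equivariant test map $\phi_{\mathcal{M}}\colon Y_{d,k}\longrightarrow U_k^{\oplus j}$ is nowhere zero, so radial retraction yields a $\Wk_k$-equivariant map $\psi\colon Y_{d,k}\longrightarrow S(U_k^{\oplus j})$. I define $\tildepsi$ to be the restriction of $\psi$ along the $\Wk_k$-equivariant inclusion of equators $Y_{d-1,k}=(S^{d-1})^k\hookrightarrow(S^d)^k=Y_{d,k}$; equivariance of $\tildepsi$ is then automatic.

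\emph{Matching dimensions.} The hypothesis $k(d-1)=(2^k-1)j-1$ makes $\dim Y_{d-1,k}$ coincide with $\dim S(U_k^{\oplus j})$, and both spaces are closed oriented manifolds of this common dimension. Hence $\deg\tildepsi\in\Z$ is well-defined.

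\emph{Vanishing of the degree.} I factor $\tildepsi$ as $(S^{d-1})^k\stackrel{\iota}{\hookrightarrow}(S^d)^k\stackrel{\psi}{\longrightarrow}S(U_k^{\oplus j})$ and examine integral top homology. By the Künneth formula, $H_n((S^d)^k;\Z)$ vanishes unless $n=md$ for some $m\in\{0,\dots,k\}$. The hypothesis that $d$ does not divide $k(d-1)$, which (using $\gcd(d,d-1)=1$) is equivalent to $d\nmid k$, rules out $k(d-1)=md$, so $H_{k(d-1)}((S^d)^k;\Z)=0$. Consequently $\iota_*$ annihilates the fundamental class of $Y_{d-1,k}$, and so does $\tildepsi_*=\psi_*\circ\iota_*$. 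Therefore $\deg\tildepsi=0$.

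\emph{Expected obstacle.} There is essentially nothing to overcome in this lemma: the arithmetic condition in the hypothesis has been engineered precisely so that the ambient product of spheres carries no homology in the relevant degree. The real work of the paper lies downstream, where this vanishing $\deg\tildepsi=0$ will be compared modulo $|\Wk_k|=8$ with the degree of a second equivariant map $\tildephi$ built from a carefully chosen family of measures; it is the signed count of equipartitions for that family, not the argument above, that will ultimately deliver a contradiction for $j-1$ a power of two.
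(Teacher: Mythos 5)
Your proposal is correct and follows essentially the same route as the paper: restrict the equivariant map $\psi\colon Y_{d,k}\to S(U_k^{\oplus j})$ (obtained from the nonexistence of an equipartition) to the product of equators, and observe that $\tildepsi_*$ factors through $H_{k(d-1)}((S^d)^k;\Z)=0$ because $d\nmid k(d-1)$, forcing $\deg\tildepsi=0$. The only additions beyond the paper's argument are cosmetic (the remark that $d\nmid k(d-1)$ is equivalent to $d\nmid k$, and the explicit derivation of $\psi$ via radial retraction of the test map), both of which are fine.
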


\begin{proof}
Since $\Delta(j,k) > d$ there is an $\Wk_k$-equivariant map $\psi\colon Y_{d,k} \longrightarrow S(U_{k}^{\oplus j})$. 
This map restricts to an $\Wk_k$-equivariant map $\tildepsi\colon  Y_{d-1,k} \longrightarrow S(U_{k}^{\oplus j})$ on the product of the equators. 
The domain and codomain of $\tildepsi$ are closed orientable manifolds of the same dimension, and thus $\tildepsi$ has a well-defined degree up to a sign.
Consider the following commutative diagram of $\Wk_k$-equivariant maps
\[
\xymatrix{
Y_{d,k}=(S^d)^k\ar[r]^{\psi}  & S(U_{k}^{\oplus j})\\
Y_{d-1,k}=(S^{d-1})^k\ar@{^{(}->}[u] \ar[ur]^{\tildepsi}.&
}
\]
After applying the $k(d-1)$-dimensional homology functor we get
\[
\xymatrix{
H_{k(d-1)}((S^d)^k;\Z)\ar[r]^{\psi_*}  & H_{k(d-1)}(S(U_{k}^{\oplus j});\Z)\\
H_{k(d-1)}((S^{d-1})^k;\Z)\ar[u] \ar[ur]^{\tildepsi_*}.&
}
\]
Thus the map $\tildepsi_\ast$ factors through $H_{k(d-1)}((S^d)^k;\Z)$.
Since $d$ does not divide $k(d-1)$ we have that $H_{k(d-1)}((S^d)^k;\Z) \cong 0$. Consequently, $\deg \tildepsi = 0$.
\end{proof}

The equality $k(d-1) = (2^k-1)j-1$ implies that $d = \frac{(2^k-1)}{k}j-\frac{1}{k}+1 = \lceil \frac{(2^k-1)}{k} j\rceil$, which coincides with the lower bound~\eqref{ramos_bound}. The space $Y_{d-1,k} = (S^{d-1})^k$ is naturally a subspace of $Y_{d,k}$ by identifying it with oriented linear hyperplanes in $\R^d$, that is, $(x_1, \dots, x_k) \in Y_{d,k} \subseteq (\R^{d+1})^k$ is in $Y_{d-1,k}$ precisely if $\langle e_{d+1}, x_i \rangle = 0$ for $i =1, \dots, k$.

We use the following generalized equivariant Hopf theorem.

\begin{theorem}[Kushkuley \& Balanov {\cite[Cor.\,2.4]{balanovLNM96}}]
\label{thm_generalized_equiv_hopf}
Let $M$ be a compact oriented $n$-dimensional manifold with an action of a finite group $G$. 
Let $N \subseteq M$ be a closed $G$-invariant subset containing the set of all points with non-trivial stabilizers. Then any two $G$-equivariant maps $\phi, \psi\colon M \longrightarrow S^n$ that are equivariantly homotopic on $N$ satisfy $\deg \phi \equiv \deg \psi \mod |G|$.
\end{theorem}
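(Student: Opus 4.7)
The plan is to rectify $\phi$ to agree with $\psi$ on an invariant thickening of $N$, localize the degree difference to the complementary region where $G$ acts freely, and exploit the $|G|$-fold quotient covering to extract the factor $|G|$. First I would invoke the $G$-equivariant homotopy extension property for $G$-CW pairs, available because compact smooth $G$-manifolds admit equivariant triangulations (Illman). Extending the given equivariant homotopy $\phi|_N \simeq_G \psi|_N$ to a homotopy of $\phi$ on all of $M$ lets me replace $\phi$ with an equivariantly homotopic map satisfying $\phi|_N = \psi|_N$. A second application of HEP to a closed $G$-invariant regular neighborhood $\tilde N \supset N$ (which equivariantly deformation-retracts onto $N$, so the two restrictions are equivariantly homotopic rel $N$) arranges that $\phi = \psi$ on all of $\tilde N$. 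Since degree is a homotopy invariant, $\deg\phi$ is unchanged.

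Set $L := \overline{M \setminus \tilde N}$, a compact $G$-manifold with boundary $\partial L$. Every point of $M$ with nontrivial stabilizer lies in the interior of $\tilde N$, so $G$ acts freely on $L$. Since $\phi$ and $\psi$ coincide on $\tilde N$ (and thus on $\partial L$), the difference $\phi^*[S^n] - \psi^*[S^n]$ defines a class $\alpha \in H^n(L, \partial L;\Z)$; via the excision $H^n(M, \tilde N) \cong H^n(L, \partial L)$ and cancellation of the $\tilde N$-contribution to the Kronecker pairing with the fundamental class,
\[
\deg\phi - \deg\psi \;=\; \bigl\langle \alpha,\, [L, \partial L] \bigr\rangle.
\]
(If some $g \in G$ reverses the orientation of $S^n$, replace $\Z$ by the twisted coefficient system $\Z_\epsilon$ throughout; the argument is formally identical.)

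To conclude, I would descend $\alpha$ through the free quotient $p\colon L \to L/G$. Fix a $G$-equivariant CW structure on $L$ with $\partial L$ as a subcomplex; the equivariance of $\phi$ and $\psi$ makes the cellular pullback of any cocycle representing $[S^n]$ a $G$-invariant cochain, so their difference is a $G$-invariant relative $n$-cochain on $(L, \partial L)$. Because the $G$-action is free, each cell of $L/G$ has exactly $|G|$ lifts in $L$ freely permuted by $G$, so every such $G$-invariant relative cochain descends uniquely to a cochain on $(L/G, \partial L/G)$; this produces $\beta \in H^n(L/G, \partial L/G;\Z)$ with $p^*\beta = \alpha$. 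Since $p$ is a $|G|$-fold oriented covering, $p_*[L, \partial L] = |G|\,[L/G, \partial L/G]$, hence
\[
\deg\phi - \deg\psi \;=\; \langle p^*\beta,\, [L,\partial L]\rangle \;=\; \langle \beta,\, p_*[L,\partial L]\rangle \;=\; |G|\,\langle \beta,\, [L/G, \partial L/G]\rangle \,\in\, |G|\,\Z.
\]
The main obstacle is the rectification step: it silently rests on equivariant triangulability of $(M, N)$ and on the existence of a $G$-invariant regular neighborhood of $N$ that equivariantly deformation-retracts onto it. Both facts are classical in the smooth category, and in general one appeals to equivariant simplicial approximation first.
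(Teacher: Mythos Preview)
The paper does not prove this theorem; it is quoted from Kushkuley--Balanov and used as a black box. So there is nothing to compare against, and your outline must stand on its own.

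Your reduction to the free part $L$ is the right move, and the final transfer identity $\langle p^*\beta,[L,\partial L]\rangle=|G|\,\langle\beta,[L/G,\partial L/G]\rangle$ is correct once you have $\beta$. The gap is in producing $\beta$. You assert that ``the equivariance of $\phi$ and $\psi$ makes the cellular pullback of any cocycle representing $[S^n]$ a $G$-invariant cochain.'' This is false: for a cellular cocycle $c$ on $S^n$ one has $g\cdot(\phi^{\#}c)=\phi^{\#}(g\cdot c)$, so $\phi^{\#}c$ is $G$-invariant only if $c$ is. Over~$\Z$ there need not be any $G$-invariant cocycle representing the generator of $H^n(S^n)$; already for the reflection action of $\Z/2$ on $S^1$ with two $1$-cells, the $\Z/2$-invariant (or $\varepsilon$-twisted invariant) $1$-cochains hit only even multiples of the generator. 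Consequently the cochain $\alpha_0=(\phi^{\#}-\psi^{\#})c$ is in general only $G$-invariant up to a relative coboundary, and ``$G$-invariant in cohomology'' does not automatically imply ``in the image of $p^*$'' (the Cartan--Leray edge map need not surject onto the invariants).

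There are two clean repairs. First, after arranging $\phi=\psi$ on $\partial L$, use that $G$ acts freely on $L$ and $S^n$ is $(n-1)$-connected to equivariantly homotope $\phi$ to agree with $\psi$ on $L^{(n-1)}$ rel $\partial L$; then $\deg\phi-\deg\psi$ is a sum over $n$-cells of $L$, which $G$ permutes freely, and (using $\varepsilon_M=\varepsilon_{S^n}$, else both degrees vanish) each $G$-orbit contributes the same integer $|G|$ times. Second, and closest in spirit to your argument: one can correct $\alpha_0$ by a relative coboundary to make it genuinely $G$-invariant, since $H^1(G;C^{n-1}(L,\partial L))=0$ because $C^{n-1}(L,\partial L)$ is a free $\Z[G]$-module; then your descent and transfer go through. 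A third option is to double $L$ along $\partial L$ to a closed free $G$-manifold and invoke the free-case equivariant Hopf theorem directly. Finally, your caveat about $N$ is real: for an arbitrary closed $N$ the pair $(M,N)$ need not have the $G$-HEP. The fix is to use only the restriction of the given homotopy to the singular set $M^{>1}\subseteq N$, which is a $G$-subcomplex after an Illman triangulation, and take the regular neighborhood of $M^{>1}$ rather than of~$N$.
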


The set $Y_{d-1,k}^{>1}$ of points in $Y_{d-1,k}$ with non-trivial stabilizers with respect to the action of $\Wk_k$ is
\[
\{(x_1, \ldots, x_k)\in Y_{d-1,k} : x_r = x_s\text{ or } x_r = -x_s\text{ for some }r\neq s\}.
\]
Observe that for $k\geq 3$ and $d \ge 2$, the space $Y_{d-1,k}^{>1}$ is path-connected, while for $k=2$ it consists of two path-components.

\begin{corollary}
\label{cor:bounds-from-maps}
Let $k(d-1) = (2^k-1)j-1$ and let $k(d-1)$ be not divisible by $d$. 
Let $\mathcal M = (\mu_1, \dots, \mu_j)$ be a collection of masses on $\R^d$ that cannot be equiparted by $k$ linear hyperplanes with the corresponding test map 
$\phi = \phi_{\mathcal M}\colon Y_{d,k} \longrightarrow U_k^{\oplus j}$. 
Denote the (normalized) test map restricted to linear hyperplanes by $\tildephi\colon Y_{d-1,k} \longrightarrow S(U_k^{\oplus j})$. If $\deg\tildephi\not\equiv 0 \mod 2^kk!$, 
then $\Delta(j,k) = d$, that is, the Ramos conjecture holds for $j$ masses and $k$ hyperplanes.
\end{corollary}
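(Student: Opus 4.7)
The plan is to argue by contradiction, combining Lemma~\ref{lemma : a-1} with the generalized equivariant Hopf theorem~\ref{thm_generalized_equiv_hopf}. Suppose that $\Delta(j,k)>d$. Then Proposition~\ref{prop:CS/TM}(2) supplies a $\Wk_k$-equivariant map $\psi\colon Y_{d,k}\to S(U_k^{\oplus j})$, whose restriction to the product of equators $Y_{d-1,k}\subseteq Y_{d,k}$ is a $\Wk_k$-equivariant map $\tildepsi\colon Y_{d-1,k}\to S(U_k^{\oplus j})$ with $\deg\tildepsi=0$ by Lemma~\ref{lemma : a-1} (the divisibility hypothesis on $k(d-1)$ is exactly what makes the lemma applicable). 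On the other hand, since $\mathcal M$ admits no equipartition by $k$ linear hyperplanes, the test map $\phi_{\mathcal M}$ is nowhere zero on $Y_{d-1,k}$, so its radial normalization $\tildephi\colon Y_{d-1,k}\to S(U_k^{\oplus j})$ is a $\Wk_k$-equivariant map, and by assumption $\deg\tildephi\not\equiv 0\pmod{2^kk!}$.

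Both $\tildepsi$ and $\tildephi$ have source the closed oriented $\Wk_k$-manifold $Y_{d-1,k}$ of dimension $k(d-1)$, and target the sphere $S(U_k^{\oplus j})$ of the same dimension $(2^k-1)j-1=k(d-1)$. I would then apply Theorem~\ref{thm_generalized_equiv_hopf} with $G=\Wk_k$ (of order $2^k k!$), with $M=Y_{d-1,k}$, and with $N=Y_{d-1,k}^{>1}$ the non-free part. Its conclusion
\[
\deg\tildephi\equiv\deg\tildepsi=0\pmod{2^kk!}
\]
(valid as soon as $\tildepsi|_N$ and $\tildephi|_N$ are $\Wk_k$-equivariantly homotopic) contradicts the hypothesis on $\deg\tildephi$. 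Hence $\Delta(j,k)\le d$; combined with the Ramos lower bound of Theorem~\ref{thm:lower-bound}, which under $k(d-1)=(2^k-1)j-1$ reads $\Delta(j,k)\ge\lceil(2^k-1)j/k\rceil=d$, this yields $\Delta(j,k)=d$.

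The main obstacle is establishing that $\tildepsi$ and $\tildephi$ are $\Wk_k$-equivariantly homotopic on $N=Y_{d-1,k}^{>1}$. I would attack this via equivariant obstruction theory orbit-type by orbit-type: each stratum of $N$ is cut out by equalities $x_r=\pm x_s$ (and intersections thereof) with a specific stabilizer $H$, and any $\Wk_k$-equivariant map sends that stratum into the fixed subsphere $S((U_k^{\oplus j})^H)$. For each occurring $H$ one must verify that the dimension of the stratum is sufficiently small relative to the connectivity of $S((U_k^{\oplus j})^H)$ so that the primary Bredon cohomology obstructions vanish; then any two $\Wk_k$-equivariant maps $N\to S(U_k^{\oplus j})$ lie in the same equivariant homotopy class, and in particular $\tildepsi|_N\simeq_{\Wk_k}\tildephi|_N$. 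This orbit-type/connectivity bookkeeping, rather than the degree accounting itself, is the technical heart of the argument and the step I expect to be most delicate.
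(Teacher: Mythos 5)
Your overall architecture is exactly the paper's: assume $\Delta(j,k)>d$, invoke Lemma~\ref{lemma : a-1} to get $\tildepsi$ of degree $0$, compare with $\tildephi$ via Theorem~\ref{thm_generalized_equiv_hopf} applied to $N=Y_{d-1,k}^{>1}$, and close with the Ramos lower bound. However, you have left the one nontrivial step --- that $\tildephi|_N$ and $\tildepsi|_N$ are $\Wk_k$-equivariantly homotopic --- as a plan rather than a proof, and the plan you sketch (orbit-type-by-orbit-type Bredon obstruction theory, checking $\dim N^H$ against the connectivity of $S((U_k^{\oplus j})^H)$ for every occurring stabilizer $H$) is not carried out; without that bookkeeping the argument is incomplete, and it is not clear a priori that it succeeds for all $k$, since it would have to show the much stronger statement that \emph{any} two equivariant maps $N\to S(U_k^{\oplus j})$ are equivariantly homotopic.

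The paper avoids all of this with a short direct argument that uses the specific nature of the two maps: both $\tildephi$ and $\tildepsi$ are normalizations of test maps (the map $\psi$ produced by Proposition~\ref{prop:CS/TM} when $\Delta(j,k)>d$ comes from a non-equipartitionable mass collection), and the homotopy is simply the straight-line homotopy in $U_k^{\oplus j}$ followed by radial normalization. To see it never hits $0$: a point $z=(x_1,\dots,x_k)\in N$ has $x_r=\pm x_s$ for some $r\neq s$, so the hyperplanes $H_r$ and $H_s$ coincide up to orientation; writing $U_k=\bigoplus_\alpha V_\alpha$ and taking $\alpha$ with $\alpha_r=\alpha_s=1$ and all other entries $0$, both maps take the value $+1$ (resp.\ $-1$) in that $V_\alpha$-summand when $x_r=x_s$ (resp.\ $x_r=-x_s$), so the segment joining the two values stays in an affine hyperplane missing the origin. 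This sign argument is the ingredient your proposal is missing; with it, the rest of your write-up (degree $0$ versus degree $\not\equiv 0 \bmod 2^kk!$, and $d=\lceil(2^k-1)j/k\rceil$ matching the lower bound) goes through as you state.
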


\begin{proof}
Suppose $\Delta(j,k) > d$.
Then from Lemma~\ref{lemma : a-1} we get an $\Wk_k$-equivariant map $\tildepsi\colon Y_{d-1,k} \longrightarrow S(U_{k}^{\oplus j})$ with $\deg \tildepsi = 0$.
By assumption there is an $\Wk_k$-equivariant map $\tildephi\colon Y_{d-1,k}\longrightarrow S(U_{k}^{\oplus j})$ with $\deg \tildephi \not\equiv 0 \mod |\Wk_k|$.
Set $N=Y_{d-1,k}^{>1}$.
Once we have shown that $\tildephi$ and $\tildepsi$ are equivariantly homotopic on $N$ we can apply Theorem~\ref{thm_generalized_equiv_hopf} and get that $\deg \tildephi \equiv \deg \tildepsi \mod |\Wk_k|$.
This is a contradiction with $\deg \tildepsi = 0$, and therefore $\Delta(j,k) \le d$.

The equivariant homotopy from $\tildephi|_N$ to $\tildepsi|_N$ is just the linear homotopy in $U_k^{\oplus j}$ normalized to the unit sphere. 
For this to be well-defined we need to show that the linear homotopy does not have a zero. This follows from the fact that for each point $z \in N$ the vectors $\phi(z)$ and $\psi(z)$ lie in some affine subspace of $U_k^{\oplus j}$ that is not a linear subspace. Since $z = (x_1, \dots, x_k) \in N$ has non-trivial stabilizer there are $r \neq s$ with $x_r = \pm x_s$. Thus the corresponding affine hyperplanes $H_r$ and $H_s$ coincide with perhaps opposite orientations. 

Recall that $U_k$ can be written as $\bigoplus_\alpha V_\alpha$, where the direct sum is taken over all $\alpha\in(\Z/2)^k$ with at least one $1$, and $V_\alpha$ is the one-dimensional real $(\Z/2)^k$-module, where $\beta \in (\Z/2)^k$ acts nontrivially precisely if $\sum \alpha_s\beta_s = 1$. 
Let $\alpha\in (\Z/2)^k$ be the element with $\alpha_s = 1 = \alpha_r$ and $\alpha_\ell = 0$ for all other indices $\ell$. 
Then for $x_s = x_r$ both maps $\phi$ and $\psi$ map the point $z$ to $1$ in the summand $V_\alpha \subseteq U_k$. For $x_s = -x_r$ the values $\psi(z)$ and $\phi(z)$ are $-1$ in the $V_\alpha$-components.
\end{proof}

\subsection{The standard configuration along the moment curve}

Now we specialize to the problem of two hyperplanes, $k = 2$. 
In this case the relevant group is the dihedral group $\Wk_2=D_8=(\Z/2)^2\rtimes \Z/2=\langle \varepsilon_1,\varepsilon_2\rangle \rtimes \langle\omega\rangle$, and the corresponding test space is $Y_{d,2}=S^d\times S^d$.
Thus the test map is a $D_8$-equivariant map $\phi\colon S^d \times S^d \longrightarrow U_2^{\oplus j}$ whose zeros correspond to equipartitions. 

Before proceeding further we recall how, in this case, $D_8=(\Z/2)^2\rtimes \Z/2=\langle \varepsilon_1,\varepsilon_2\rangle \rtimes \langle\omega\rangle$ acts on $S^d \times S^d$ and $U_2$.
For $(u,v)\in S^d \times S^d$ we have that
\[
\varepsilon_1\cdot(u,v)=(-u,v),\quad \varepsilon_2\cdot(u,v)=(u,-v),\quad \omega\cdot(u,v)=(v,u).
\]
The real $3$-dimensional $D_8$-representation $U_2$ considered as a $(\Z/2)^2$-representation decomposes into a direct sum of irreducible real $1$-dimensional representations as 
$U_2=V_{(1,0)}\oplus V_{(0,1)}\oplus V_{(1,1)}$,
where $V_{(1,0)}=V_{(0,1)}=V_{(1,1)}=\R$ and 
\[
\varepsilon_1\cdot(a,b,c)=(-a,b,-c),\quad \varepsilon_2\cdot(a,b,c)=(a,-b,-c),\quad \omega\cdot(a,b,c)=(b,a,c)
\]
for $(a,b,c)\in V_{(1,0)}\oplus V_{(0,1)}\oplus V_{(1,1)}$.

We will now define masses $\mu_1, \dots, \mu_j$ for which computing the degree of the normalized test map restricted to linear hyperplanes is particularly simple.
Recall that the \emph{moment curve} $\gamma(t) = (t, \ldots, t^d)$ in $\R^d$ has the special property that any set of pairwise distinct points on $\gamma$ is in general position. 
Hence, every affine hyperplane intersects $\gamma$ in at most $d$ points. 
For the rest of this section we consider the masses $\mu_1, \ldots, \mu_j$ to be concentrated along $j$ pairwise disjoint intervals along the moment curve
that do not include the origin.

The masses $\mu_1, \dots, \mu_j$ satisfy the hypotheses of Corollary \ref{cor:bounds-from-maps} for $k = 2$ and $2d = 3j+1$: 
Any equipartition of $\mu_1, \ldots, \mu_j$ by two affine hyperplanes intersects the moment curve in $3j$ points. Additionally requiring that both hyperplanes pass through the origin prescribes one more intersection point with $\gamma$ for each hyperplane. 
Two hyperplanes intersect the moment curve in at most $2d$ points, that is, the space of linear hyperplanes $Y_{d-1,2}$ contains no pair of equiparting hyperplanes if $2d < 3j+2$. 
Now we will compute the degree of the restricted test map by counting equipartitions.

\begin{lemma}\label{lemma:number-of-equipartitons}
Let $2d = 3j+1$ and $\mu_1, \ldots, \mu_j$ be masses concentrated on the
pairwise disjoint intervals $\gamma([2,3]),\ldots,\gamma([2j,2j+1])$ of length $1$ along the moment curve in $\R^d$. 
Then there are $\binom{j}{\frac{j-1}{2}}$ pairs of unoriented (non-parallel) affine hyperplanes $(H_1, H_2)$ equiparting $\mu_1, \dots, \mu_j$ such that $H_2$ passes through the origin.
\end{lemma}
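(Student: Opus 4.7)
The plan is to count equipartitions directly via the combinatorics of cut points along the moment curve $\gamma$. Fix a candidate equipartition $(H_1,H_2)$ with $H_2$ linear, and restrict attention to one interval $\gamma(i,i+1)$ supporting $\mu_i$. Since each of the four orthants must receive mass $\tfrac14$ of $\mu_i$, and each of the four subintervals of $\gamma(i,i+1)$ determined by the three cuts lies entirely in a single orthant, the three cuts must occur exactly at the quartile points $q_{i,1}<q_{i,2}<q_{i,3}$ of $\mu_i$ on $\gamma(i,i+1)$, and the resulting four mass-$\tfrac14$ subintervals must bijectively populate the four orthants. In total this accounts for $3j$ intersections of $H_1\cup H_2$ with $\gamma$. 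Any hyperplane meets $\gamma$ in at most $d=(3j+1)/2$ points (so $j$ must be odd), and any linear hyperplane automatically contains $\gamma(0)$; a direct count then forces $H_1$ to meet $\gamma$ in exactly $d$ quartile points and $H_2$ to meet $\gamma$ in $\gamma(0)$ together with exactly $d-1$ quartile points, with no cut falling outside the $j$ intervals.

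Next I classify the admissible per-interval patterns. Writing $(a_i,b_i)$ for the numbers of cuts on $\gamma(i,i+1)$ contributed by $H_1$ and $H_2$, so $a_i+b_i=3$, and traversing the interval from left to right, crossing an $H_1$-cut toggles the first coordinate of the orthant label while crossing an $H_2$-cut toggles the second. A finite check over the eight ordered sequences $H_{\ell_1}H_{\ell_2}H_{\ell_3}$ shows that only the two alternating sequences $H_1H_2H_1$ and $H_2H_1H_2$ visit all four orthants; in the first $q_{i,2}$ is cut by $H_2$ with $(a_i,b_i)=(2,1)$, in the second $q_{i,2}$ is cut by $H_1$ with $(a_i,b_i)=(1,2)$. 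Hence an equipartition is encoded by the datum $x_i\in\{1,2\}$ recording which hyperplane cuts the central quartile on interval $i$.

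Setting $n_\ell=\#\{i:x_i=\ell\}$, the totals $n_1+2n_2=\sum_i a_i=d$ and $2n_1+n_2=\sum_i b_i=d-1$ together with $n_1+n_2=j$ force $n_1=(j-1)/2$ and $n_2=(j+1)/2$. Conversely, each subset $S\subseteq\{1,\dots,j\}$ with $|S|=(j-1)/2$ (encoding $\{i:x_i=1\}$) prescribes $d$ quartile points through which $H_1$ must pass and $d-1$ nonzero quartile points through which $H_2$ must pass. By the Vandermonde-type general position property of the moment curve, $d$ distinct points on $\gamma$ determine a unique affine hyperplane, and $d-1$ distinct nonzero points on $\gamma$ together with the origin determine a unique linear hyperplane; the degree bound then guarantees that $H_1\cap\gamma$ and $H_2\cap\gamma$ consist exactly of the prescribed points, so the intended alternating cut pattern is realized on every interval and $(H_1,H_2)$ is an equipartition. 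This gives a bijection between equipartitions and subsets $S$, yielding the $\binom{j}{(j-1)/2}$ pairs claimed.

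The most delicate step I anticipate is the case analysis of the second paragraph: ruling out all nonalternating sequences and confirming that the alternating ones visit all four orthants regardless of the initial orthant on each interval. Non-parallelism of $H_1$ and $H_2$ for all resulting configurations is a nondegenerate open condition on the interval positions that can be verified for the canonical integer intervals, or otherwise enforced by an arbitrarily small perturbation of the endpoints that leaves the combinatorial count unchanged.
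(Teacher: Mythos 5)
Your argument is correct and follows essentially the same route as the paper's proof: the intersection count with the moment curve forces exactly three quartile cuts per interval in an alternating $H_1H_2H_1$ or $H_2H_1H_2$ pattern, and the count reduces to choosing the $\binom{j}{(j-1)/2}$ intervals on which $H_1$ cuts once; you additionally spell out the converse (each subset is realized by a unique pair via general position of points on $\gamma$), which the paper leaves implicit. The only blemish is your closing remark on non-parallelism: two parallel hyperplanes leave an orthant empty, so non-parallelism is automatic for any equipartition and no perturbation of the intervals is needed (nor would one be admissible, since the intervals are fixed by hypothesis).
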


\begin{proof}
To equipart $\mu_1, \ldots, \mu_j$ the pair $(H_1, H_2)$ needs to have at least $3j$ intersection points with the moment curve. Moreover, $H_2$ is a linear hyperplane. 
Thus hyperplanes $(H_1, H_2)$ intersect the moment curve in at least $3j+1$ points. 
Since $2d = 3j+1$ and every hyperplane can intersect in at most $d$ points, there are exactly $3j+1$ intersection points. 
In particular, each $\mu_i$ has either one intersection with $H_1$ (in the midpoint of $\mu_i$) and two intersections with $H_2$ (in the midpoint of the two halves defined by $H_1$) or vice versa.
Consequently, the intersection points of the pair $(H_1, H_2)$ with the interval $\mu_i$ are uniquely determined by the number of intersections of $\mu_i$ and $H_1$. 
There are $\binom{j}{2j-d}$ masses with exactly one point of intersection with $H_1$. 
Since $d = \frac{3j+1}{2}$ this is equal to $\binom{j}{\tfrac{j-1}{2}}$.
\end{proof}

\subsection{Computing the degree of the restricted test map geometrically}

Let $\phi = \phi_{\mathcal M}\colon S^d \times S^d \longrightarrow U_2^{\oplus j}$ be the $D_8$-equivariant test map associated to the standard configuration $\mathcal M$ of $j$ masses along the moment curve in $\R^d$ where $2d = 3j+1$. By Lemma~\ref{lemma:number-of-equipartitons} such an equipartition exists and thus $\phi^{-1}(0)$ is non-empty. However there is no such equipartition by linear hyperplanes since this would require more than $d$ intersection points of some hyperplane with the moment curve $\gamma$. 

Denote by $\tildephi\colon S^{d-1} \times S^{d-1} \longrightarrow S(U_2^{\oplus j})$ 
the normalized restriction of $\phi$ to linear hyperplanes. 
Note that $\dim S^{d-1} \times S^{d-1} = 2d-2 = 3j-1 = \dim S(U_2^{\oplus j})$ and
thus $\tildephi$ has well-defined degree (up to a sign).
For even $d$ this degree modulo $8$ was previously computed by 
\v{Z}ivaljevi\'c \cite[Prop.\,9.15]{zivaljevic2011equipartitions}.

\begin{lemma}\label{lem:degrees-test-maps}
	For even $d$ the map $\tildephi\colon S^{d-1} \times S^{d-1} \longrightarrow S(U_2^{\oplus j})$ has degree 
	\[\deg \tildephi = 2 \binom{j}{\tfrac{j-1}{2}}.\]
	For odd $d$ the degree of $\tildephi$ vanishes.
\end{lemma}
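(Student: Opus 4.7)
The plan is to compute $\deg\tildephi$ as a signed count of zeros of $\phi$ on the product $B^d \times S^{d-1}$, where $B^d \subset S^d$ is a closed hemisphere, using Lemma~\ref{lemma:number-of-equipartitons} and the $\varepsilon_2$-equivariance of $\phi$. The space $B^d \times S^{d-1}$ is a compact oriented $(3j)$-manifold with boundary $S^{d-1}\times S^{d-1}$, and the target $U_2^{\oplus j}$ is $3j$-dimensional. Since no two linear hyperplanes equipart the $\mu_i$ (by the moment-curve dimension count preceding the lemma), $\phi|_{S^{d-1}\times S^{d-1}}$ has no zeros, so choosing $B^d$ generically so that no equiparting normal lies on its equator (and after verifying transversality at each zero using the positivity of the moment-curve densities), the standard boundary-degree formula yields $\deg \tildephi = \sum_x \operatorname{sgn}\det d\phi_x$ summed over zeros $x \in \operatorname{int}(B^d \times S^{d-1})$. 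By Lemma~\ref{lemma:number-of-equipartitons}, each of the $\binom{j}{(j-1)/2}$ unoriented equiparting pairs $(H_1, H_2)$ with $H_2$ linear lifts uniquely in the first factor (to the one orientation of $H_1$ with normal in $\operatorname{int}(B^d)$) and in two ways in the second factor (to $\pm v \in S^{d-1}$), giving $2\binom{j}{(j-1)/2}$ zeros paired by $(u,v)\leftrightarrow (u,-v)$.

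Next I compare local signs within each pair via the equivariance $\phi(u,-v) = \varepsilon_2 \cdot \phi(u,v)$. The antipodal map on $S^{d-1}$ has degree $(-1)^d$, so $\varepsilon_2$ acts on the orientation of $B^d \times S^{d-1}$ with sign $(-1)^d$. On $U_2 = V_{(1,0)}\oplus V_{(0,1)}\oplus V_{(1,1)}$, the element $\varepsilon_2$ has determinant $(+1)(-1)(-1) = +1$, hence preserves the orientation of $U_2^{\oplus j}$. The chain rule then yields $\operatorname{sgn}\det d\phi_{(u,-v)} = (-1)^d \operatorname{sgn}\det d\phi_{(u,v)}$. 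For odd $d$ the contributions cancel within every pair, so $\deg \tildephi = 0$, proving the second assertion of the lemma.

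For even $d$ the paired contributions coincide, reducing the computation to $\deg \tildephi = 2\sum_{S}\operatorname{sgn}\det d\phi_{(u_S, v_S)}$, indexed by the $(j-1)/2$-subsets $S \subseteq \{1,\dots,j\}$ of masses cut once by $H_1$. The main obstacle is to show that all $\binom{j}{(j-1)/2}$ local signs agree and equal $+1$ for a compatible choice of orientations. I would attack this by writing $d\phi$ at $(u_S, v_S)$ in block form, organized by mass $\mu_i$ and by the three nontrivial characters of $(\Z/2)^2$: an infinitesimal motion of $u$ or $v$ shifts $\mu_i(H^{\alpha_1}_u \cap H^{\alpha_2}_v)$ by a signed integral of the density of $\mu_i$ across the perturbed hyperplane, which reduces to positive density values at the intersection points of $H_1, H_2$ with $\gamma$. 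Swapping one index of $S$ with one index of its complement should permute both rows and columns of the resulting Jacobian matrix by transpositions whose signs cancel, so $\operatorname{sgn}\det d\phi_{(u_S, v_S)}$ is independent of $S$. A single explicit computation at a convenient base subset then fixes this common sign to $+1$, yielding $\deg \tildephi = 2\binom{j}{(j-1)/2}$ and completing the lemma.
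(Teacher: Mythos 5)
Your proposal follows essentially the same route as the paper: counting the $2\binom{j}{(j-1)/2}$ zeros of $\phi$ on $W\cong B^d\times S^{d-1}$ via the boundary-degree formula, pairing them by $(u,v)\mapsto(u,-v)$ with relative sign $(-1)^d$ (using $\det\varepsilon_2=+1$ on $U_2^{\oplus j}$), and then arguing that all local signs agree for even $d$ by a permutation-parity comparison of Jacobians at different roots. The paper settles that last step with the same parity observation you sketch (the coordinate change between two roots is an even permutation because the symmetric difference of two equal-cardinality index sets has even size, together with contractibility of the relevant component of the space of hyperplane pairs meeting the moment curve in $d$ points), so your outline is correct and matches the published argument.
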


We will now prove this lemma by counting zeros of $\phi$ with signs and multiplicities. Theorem \ref{thm:Delta(2^t+1,2)} then follows from an application of Corollary \ref{cor:bounds-from-maps} once we have established that $2 \binom{2^t+1}{2^{t-1}}$ is not divisible by $8$ for $t \ge 2$.

\begin{proof}[Proof of Lemma~\ref{lem:degrees-test-maps}]
Let $W \subseteq S^d \times S^d$ be the subspace of hyperplanes $(H_1,H_2)$, where $H_1$ has the origin in its positive half-space and $H_2$ is a linear hyperplane. The subspace $W$ is a manifold homeomorphic to $B^d \times S^{d-1}$ with boundary $S^{d-1} \times S^{d-1}$. By Lemma~\ref{lemma:number-of-equipartitons} $\phi$ has $2  \binom{j}{\tfrac{j-1}{2}}$ zeros on $W$: The orientation of $H_1$ is prescribed by the requirement that the origin be in its positive half-space, but the orientation of $H_2$ is not prescribed. 
We will show that for $d$ even all local degrees of $\phi$ on $W$ are $1$ and that $\deg \tildephi$ is the sum of local degrees of $\phi$ on $W$.

Denote by $\widetilde{W} = W \setminus \phi^{-1}(B_\epsilon(0))$ for a sufficiently small $\epsilon > 0$ such that $W \setminus \phi^{-1}(0)$ deformation retracts to $\widetilde{W}$. 
The boundary $\partial \widetilde{W}$ consists of $Y_{d-1,2}$ and disjoint copies of $(2d-2)$-spheres $S_1, \dots , S_{\ell}$, one for each zero of $\phi$ on $W$.
Let $\phi'\colon\widetilde{W}\longrightarrow S(U_2^{\oplus j})$ denote the composition of $\phi$ and radial retraction restricted to $\widetilde{W}$.
The fundamental class $[Y_{d-1,2}]$ is equal to $\sum [S_i]$ in $H_{2d-2}(\widetilde{W})$ since $Y_{d-1,2}$ and $\bigcup S_i$ are cobordant in $\widetilde{W}$. 
Now $\sum \phi'_\ast([S_i]) = \phi'_\ast([Y_{d-1,2}]) = \deg\tildephi\cdot[S(U_2^{\oplus j})]$, and hence $\deg\tildephi = \sum \deg \phi'|_{S_i}$, consult \cite[Prop.\,IV.4.5]{outerelo2009}.

That local degrees of $\phi$ are $\pm 1$ is simple to see since in a small neighborhood $U$ around any zero $(u,v)$ the test map $\phi$ is a continuous bijection: 
For any sufficiently small vector $w \in \R^{3j}$ there is exactly one tuple $(u',v') \in U$ with $\phi(u',v') = w$. 
Thus $\phi |_{\partial U}$ is a continuous bijection into some $(3j-1)$-sphere around the origin and by compactness of $\partial U$ is a homeomorphism.

The symmetry of the configuration allows us to compute the local signs of the test map. 
First let us describe a neighborhood of every zero of the test map in $W$. 
Let $(u,v) \in W$ with $\phi(u,v) = 0$. 
Denote the intersections of $H_u$ with the moment curve by $x_1, \ldots, x_d$ in the correct order along the moment curve. 
Similarly, let $y_1, \ldots, y_d$ be the intersections of $H_v$ with the moment curve. 
In particular, $y_1 = 0$. 
Choose an $\epsilon > 0$ such that $\epsilon$-balls around the $x_1, \ldots, x_d$ and around $y_2, \dots, y_d$ are pairwise disjoint and such that these balls intersect the moment curve only in precisely one interval $\mu_i$.

Tuples of hyperplanes $(H_{u'}, H_{v'})$ with $(u',v') \in W$ that still intersect the moment curve in the corresponding $\epsilon$-balls parametrize a neighborhood of $(u,v)$. 
The local neighborhood consisting of pairs of hyperplanes with the same orientation still intersecting the moment curve in the corresponding $\epsilon$-balls can be naturally parametrized by $\prod_{i=2}^{2d}  (-\epsilon, \epsilon)$, where the first $d$ factors correspond to neighborhoods of the $x_i$ and the last $d-1$ factors to $\epsilon$-balls around $y_2, \dots, y_d$. A natural basis of the tangent space at $(u,v)$ is obtained via the push-forward of the canonical basis of $\R^{2d-1}$ as tangent space at the origin. 

Consider the subspace $Z \subseteq W$ that consists of pairs of hyperplanes $(H_u, H_v)$ 
in~$W$ that each intersect the moment curve in $d$ points. 
It has two path-components determined by the orientation of $H_v$. 
The path-components of $Z$ are contractible as each hyperplane can be continuously moved to intersect the moment curve in $d$ fixed points. 
On each part the orientation around the zeros given above derives from the same global orientation since the given bases of tangent spaces transform into one another along this contraction path. 
The map $\varepsilon_2\colon (H_u, H_v) \longmapsto (H_u, H_{-v})$ is orientation-preserving if and only if $d$ is even.

Any two neighborhoods of distinct zeroes of the test map $\phi$ can be mapped onto each other by a composition of coordinate charts since their domains coincide. This is a smooth map of degree $1$: the Jacobian at the zero is the identity map. Let $(u,v)$ and $(x,y)$ be zeroes in the same path-component of $Z$ of the test map $\phi$ and let $\Psi$ be the change of coordinate chart described above. Then $\phi$ and $\phi\circ\Psi$ differ in a neighborhood of $(u,v)$ just by a permutation of coordinates. 
This permutation is always even by the following: 

\begin{claim*}
{Let $A$ and $B$ be finite sets of the same cardinality. 
Then the cardinality of the symmetric sum $A \vartriangle B$ is even.}
\end{claim*}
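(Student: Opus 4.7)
The plan is to prove this directly by decomposing the symmetric difference into two disjoint pieces and observing that the hypothesis $|A|=|B|$ forces these pieces to have equal cardinality.

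First I would write $A \vartriangle B = (A \setminus B) \sqcup (B \setminus A)$, which is a disjoint union by definition, so $|A \vartriangle B| = |A \setminus B| + |B \setminus A|$. Next, using the partition $A = (A \setminus B) \sqcup (A \cap B)$ and analogously for $B$, I obtain
\[
|A \setminus B| = |A| - |A \cap B| \qquad \text{and} \qquad |B \setminus A| = |B| - |A \cap B|.
\]
The assumption $|A| = |B|$ then gives $|A \setminus B| = |B \setminus A|$, hence $|A \vartriangle B| = 2\,|A \setminus B|$, which is even.

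There is no real obstacle here; the statement is a routine consequence of inclusion-exclusion on finite sets, and I would expect the proof to occupy at most two or three lines in the paper. The only thing to be mindful of is that both $A \setminus B$ and $B \setminus A$ are indeed finite (which follows from $A$ and $B$ being finite), so the arithmetic of cardinalities is valid without qualification.
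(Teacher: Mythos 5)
Your argument is correct, and it is the standard counting argument $|A\vartriangle B|=|A|+|B|-2\,|A\cap B|$; the paper states this claim without proof, treating it as immediate, so your two-line verification is exactly what is implicitly intended.
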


Up to orientation of $H_u$ the hyperplanes $H_u$ and $H_v$ are completely determined by the set of measures that $H_u$ cuts once. 
Let $A \subseteq \{1, \dots, j\}$ be the set of indices of measures that $H_u$ intersects once, and let $B \subseteq \{1, \dots, j\}$ be the same set for $H_v$. 
Then $\Psi$ is a composition of a multiple of $A \vartriangle B$ transpositions and, hence, an even permutation.

The linear map $\varepsilon_2\colon U_2^{\oplus j}\longrightarrow U_2^{\oplus j}$ always has determinant equal to $1$ since $\varepsilon_2$ is a composition of $2j$ reflections in hyperplanes on $U_2^{\oplus j}$. 
Thus for $d$ even all local degrees of $\phi$ on $W$ are the same since the coordinate change $\Psi$ preserves orientation (on a path-component),
and we have proved Lemma \ref{lem:degrees-test-maps}. Thus for $d$ even $\deg \tildephi = 2  \binom{j}{\tfrac{j-1}{2}}$.
\end{proof}

To apply Corollary \ref{cor:bounds-from-maps} it is essential to know when the binomial coefficient $\binom{j}{\tfrac{j-1}{2}}$ is divisible by $4$. 
This is answered by the following lemma by Kummer.

\begin{lemma}[Kummer \cite{kummer1852}]
\label{lemma:kummer}
Let $n \ge m \ge 0$ be integers and let $p$ be a prime. The maximal integer $k$ such that $p^k$ divides $\binom{n}{m}$ is the number of carries when $m$ and $n-m$ are added in base $p$.
\end{lemma}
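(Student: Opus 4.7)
The plan is to derive Kummer's theorem from Legendre's formula for the $p$-adic valuation of a factorial, reducing the claim to a bookkeeping identity between digit sums and carries in base-$p$ addition.

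First I would invoke Legendre's formula $v_p(n!) = \sum_{i \ge 1} \lfloor n/p^i \rfloor$ together with its standard digit-sum reformulation
\[
v_p(n!) \;=\; \frac{n - s_p(n)}{p-1},
\]
where $s_p(n)$ denotes the sum of the digits of $n$ written in base $p$; this follows by substituting $n = \sum_j a_j p^j$ into Legendre's sum and recognizing a geometric-series telescoping. Applying this identity to each factorial in $\binom{n}{m} = n!/(m!\,(n-m)!)$ and using $n = m + (n-m)$ to cancel the linear terms yields
\[
v_p\!\left(\binom{n}{m}\right) \;=\; \frac{s_p(m) + s_p(n-m) - s_p(n)}{p-1}.
\]
The claim therefore reduces to showing that this numerator is exactly $(p-1)$ times the number of carries in the base-$p$ addition $m + (n-m) = n$.

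Next I would verify this reduction directly from the schoolbook addition algorithm. Writing $m = \sum a_i p^i$, $n-m = \sum b_i p^i$, and $n = \sum n_i p^i$, and letting $c_{i+1} \in \{0,1\}$ denote the carry from position $i$ to position $i+1$ (with $c_0 = 0$), the positional identity $a_i + b_i + c_i = n_i + p\,c_{i+1}$ holds at each $i$. Summing over $i$, reindexing the carry terms, and using $c_0 = 0$ gives
\[
s_p(m) + s_p(n-m) - s_p(n) \;=\; (p-1)\sum_{i \ge 1} c_i,
\]
and the right-hand side is precisely $(p-1)$ times the total number of carries. Combining this with the previous display identifies $v_p\!\left(\binom{n}{m}\right)$ with the carry count, completing the proof.

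The main ``obstacle'' is merely cosmetic: the conceptual content is the observation that a single carry removes $p$ from the raw digit sum at one position while adding $1$ at the next, for a net loss of exactly $p-1$. Once packaged as the identity above, the result collapses to Legendre's formula, and no deeper input is needed.
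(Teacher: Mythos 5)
Your argument is correct and complete: the reduction of Kummer's theorem to Legendre's formula via the digit-sum identity $v_p(n!)=(n-s_p(n))/(p-1)$, followed by the telescoping of the positional carry relation $a_i+b_i+c_i=n_i+p\,c_{i+1}$ with $c_0=0$, is the standard proof and every step checks out. The paper itself offers no proof of this lemma (it is stated as a classical result with a citation to Kummer's 1852 paper), so there is nothing to compare against; your write-up would serve as a self-contained justification.
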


Putting these statements together we obtain a proof of Theorem \ref{thm:Delta(2^t+1,2)}.

\begin{proof}[Proof of Theorem \ref{thm:Delta(2^t+1,2)}]
Let $k=2$, $j= 2^t +1$ with $t\ge2$, and $d = 3\cdot 2^{t-1}+2$. 
Then $2(d-1) = j(2^k-1)-1$ and $d$ does not divide $k$. Thus we can apply Corollary~\ref{cor:bounds-from-maps} to the standard configuration $\mathcal M$ of $j$ masses along the moment curve. The restriction to linear hyperplanes $\tildephi$ of the corresponding test map $\phi_{\mathcal M}$ has degree $\binom{j}{\tfrac{j-1}{2}}$ by 
Lemma~\ref{lem:degrees-test-maps} since $d$ is even. This degree is non-zero modulo $8$ by Lemma~\ref{lemma:kummer}.
\end{proof}

\section{The failure of the free configuration space}
\label{sec : the failure}

Here we prove the following theorem about the existence of $\Wk_k$-equivariant maps from the free configuration space $Z_{d,k}$.
Recall that $Z_{d,k}=\{(x_1,\dots,x_k)\in Y_{d,k}: x_s\neq\pm x_r\text{\,for\,}s<r\}$ 
is the largest subspace of $Y_{d,k}$ on which the $\Wk_k$-action is free.

\begin{theorem}
\label{theorem_existence_W_k_maps}
Let $d\geq k\geq 3$ and $(2^k-1)j+2\geq \max\{dk,dk+4-k\}$. 
Then there is an $\Wk_k$-equivariant map $Z_{d,k}\longrightarrow S(U_{k}^{\oplus j})$.
\end{theorem}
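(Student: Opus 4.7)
The plan is to apply equivariant obstruction theory in the spirit of tom Dieck, exploiting the freeness of the $\Wk_k$-action on $Z_{d,k}$. Producing a $\Wk_k$-equivariant map $Z_{d,k}\longrightarrow S(U_{k}^{\oplus j})$ is equivalent to producing a section of the sphere bundle $\xi\colon Z_{d,k}\times_{\Wk_k}S(U_{k}^{\oplus j})\longrightarrow N$ over the orbit space $N := Z_{d,k}/\Wk_k$. The first step is to identify $N$: quotienting by $(\Z/2)^k$ turns $(S^d)^k$ into $(\RP^d)^k$ and $Z_{d,k}$ into the ordered configuration space $F(\RP^d,k)$; a further quotient by $\Sym_k$ identifies $N$ with the unordered configuration space of $k$ distinct points in $\RP^d$. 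Thus $N$ is an open connected smooth manifold of dimension $dk$, and consequently has the homotopy type of a CW complex of dimension at most $dk-1$. In particular, $H^i(N;A)=0$ for $i\ge dk$ and every local coefficient system $A$.

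Next I would run obstruction theory for $\xi$. The fibre $S^n$, with $n=(2^k-1)j-1$, is $(n-1)$-connected, so the primary obstruction is the Euler class of the associated vector bundle $E := Z_{d,k}\times_{\Wk_k}U_k^{\oplus j}$, living in $H^{n+1}(N;\Z_\rho)$ with $\rho$ the orientation character. The secondary obstruction lies in $H^{n+2}(N;\Z/2)$ (since $n$ is comfortably in the stable range), and all subsequent obstructions live in still higher degrees with finite coefficients. The hypothesis is calibrated to squeeze these obstructions into the top cohomology range of $N$: for $k\ge 4$ the bound $(2^k-1)j+2\ge dk$ gives $n+1\ge dk-2$, so only the primary (in degree $dk-2$ or $dk-1$) and the secondary (in degree $dk-1$) can be nonzero; for $k=3$ the strengthened bound $(2^k-1)j+2\ge dk+1$ forces $n+1\ge dk-1$, leaving at most the primary obstruction in degree $dk-1$.

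The main task is to show these surviving obstructions vanish. I would use the $(\Z/2)^k$-decomposition $U_k=\bigoplus_{\alpha\ne 0}V_\alpha$ to split $E$ as a direct sum of sub-bundles grouped by $\Sym_k$-orbits of $\alpha$, so that $e(E)$ factors as a product of Euler classes of these lower-rank sub-bundles; killing any one of them kills $e(E)$. To kill one factor I would exhibit a nowhere-zero $\Wk_k$-equivariant section of a well-chosen summand, using that the freeness condition $v_r\ne\pm v_s$ defining $Z_{d,k}$ is exactly what is needed to keep naturally defined equivariant expressions in the $v_i$ from vanishing. For the secondary obstruction, present only in the $k\ge 4$ case and living in $H^{dk-1}(N;\Z/2)$, I would invoke Poincar\'e--Lefschetz duality on the open manifold $N$ (identifying it with a class in Borel--Moore homology $H_1^{BM}(N;\Z/2)$), or a restriction/transfer argument along an odd-index subgroup of $\Wk_k$, to push it into the vanishing range.

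The hardest step will be the explicit construction of a nowhere-vanishing equivariant section of a summand: one cannot build such a section as a product of "odd functions of individual $v_i$", since every continuous odd function $S^d\to\R$ has a zero by Borsuk--Ulam (and $d\ge k\ge 3$), and so one must genuinely use the relational constraint $v_r\ne\pm v_s$ to mix the $v_i$ antisymmetrically. Handling the secondary obstruction for $k\ge 4$ is the second substantial difficulty; the extra term $+4-k$ in the hypothesis specifically for $k=3$ is a strong hint that the secondary obstruction does not vanish merely for dimensional reasons when $k\ge 4$, and genuinely needs its own argument.
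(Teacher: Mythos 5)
Your general framework (obstruction theory for the sphere bundle over the free quotient $Z_{d,k}/\Wk_k$) is the same as the paper's, but the decisive input is missing, and the steps you defer to later are exactly the ones that cannot be carried out as described. Your only dimensional control on the base is that an open $dk$-manifold has cohomological dimension at most $dk-1$; this leaves you with a primary obstruction in degree $(2^k-1)j\ge dk-2$ and a secondary one in degree $dk-1$, which you then propose to kill by (a) an explicit nowhere-vanishing $\Wk_k$-equivariant section of a summand of the test bundle and (b) a duality or transfer argument. Neither is constructed, and neither is plausible as stated: the transfer along an odd-index subgroup of $\Wk_k$ only reduces to a Sylow $2$-subgroup, and Poincar\'e--Lefschetz duality merely identifies $H^{dk-1}(N;\Z/2)$ with a Borel--Moore $H_1$, which has no reason to vanish. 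The paper's proof replaces all of this by one sharp computation: $\mathrm{cohdim}\,Z_{d,k}=(d-1)k+1=dk-k+1$, obtained from Poincar\'e--Lefschetz duality for the pair $(Y_{d,k},Y_{d,k}^{>1})$ together with the fact that $Y_{d,k}^{>1}$ is $(k-3)$-connected (nerve lemma plus the type~$B$ partition lattice). Combined with the Smith-theory fact that passing to the quotient by a finite group does not raise the range of nonvanishing cohomology, this bound is $\le dk-2<(2^k-1)j$ under the hypothesis, so \emph{every} obstruction, including your "primary" and "secondary" ones, vanishes for purely dimensional reasons; no section of any summand is needed.

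You have also misread the role of the hypothesis. The term $dk+4-k$ in the $\max$ is not a hint that the secondary obstruction "genuinely needs its own argument" for $k\ge4$; it is precisely the inequality $\mathrm{cohdim}\,Z_{d,k}=(d-1)k+1<(2^k-1)j$, which for $k=3$ is strictly stronger than $(2^k-1)j+2\ge dk$ and for $k\ge4$ is weaker. The other term, $(2^k-1)j+2\ge dk$, serves a purpose you have not addressed at all: it confines the homotopy groups $\pi_i S(U_k^{\oplus j})$ appearing as obstruction coefficients to $i\in\{(2^k-1)j-1,(2^k-1)j,(2^k-1)j+1\}$, where they are $\Z$ or $\Z/2$, so that after verifying that $\Wk_k$ acts orientation-preservingly on $S(U_k^{\oplus j})$ (a small binomial-coefficient computation for the generators $\varepsilon_i$ and the transpositions) all coefficient modules are trivial and the equivariant cohomology groups reduce to ordinary cohomology of the quotient. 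Without the cohomological dimension computation your plan does not close, and without the coefficient-triviality check even the reduction to cohomology of $N$ is not justified in the form you use it.
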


Theorem~\ref{theorem_existence_W_k_maps} will be proved in Section~\ref{subsec:Proof_of_Thm6.1}.
As $\dim S(U_{k}^{\oplus j})=(2^k-1)j-1$ and $\dim Z_{d,k}=dk$, it exhibits a disadvantage of the free configuration spaces.

As a direct consequence of Theorem~\ref{theorem_existence_W_k_maps} we 
prove the first main result
claimed in \v{Z}ivaljevi\'c's 2008 paper, \cite[Thm.\,5.9]{zivaljevic2008}. 

\begin{corollary}
\label{cor:Rade-01}
There is an $\Wk_k$-equivariant map $f\colon Z_{4,4}\longrightarrow S(U_4)$.
\end{corollary}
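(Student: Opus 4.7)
The plan is to obtain Corollary~\ref{cor:Rade-01} as an immediate specialization of Theorem~\ref{theorem_existence_W_k_maps}. Concretely, I would set $d = k = 4$ and $j = 1$, so that $U_k^{\oplus j} = U_4$, and verify the two hypotheses of Theorem~\ref{theorem_existence_W_k_maps} by a direct numerical check.

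First I would check $d \geq k \geq 3$: with $d = k = 4$ this reads $4 \geq 4 \geq 3$, which holds. Second I would evaluate the inequality
\[
(2^k-1)j + 2 \;\geq\; \max\{dk,\, dk + 4 - k\}.
\]
For $k = 4$, $j = 1$, $d = 4$ the left-hand side equals $(2^4 - 1)\cdot 1 + 2 = 15 + 2 = 17$, while $dk = 16$ and $dk + 4 - k = 16 + 4 - 4 = 16$, so the right-hand side is $\max\{16,16\} = 16$. Hence $17 \geq 16$, and the second hypothesis is satisfied as well.

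With both hypotheses verified, Theorem~\ref{theorem_existence_W_k_maps} applied with these parameters yields a $\Wk_4$-equivariant map $Z_{4,4} \longrightarrow S(U_4)$, which is precisely the map $f$ asserted in Corollary~\ref{cor:Rade-01}.

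There is no real obstacle here; all the content of the corollary is already contained in the statement of Theorem~\ref{theorem_existence_W_k_maps}, whose proof (deferred to Section~\ref{subsec:Proof_of_Thm6.1}) is where the actual work lies. The only thing to guard against is a sign or off-by-one error in matching parameters, which is why I would explicitly record the two numerical verifications above before invoking the theorem.
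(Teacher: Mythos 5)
Your proposal is correct and matches the paper exactly: the corollary is stated there as a direct consequence of Theorem~\ref{theorem_existence_W_k_maps} with $d=k=4$, $j=1$, and your numerical verification ($17 \geq \max\{16,16\}$) is precisely the check required.
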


\noindent 
In Section~\ref{sec:gap-lemma-4.3} we explain why the proof 
given in \cite{zivaljevic2008} for this result is invalid.
For comparison, $Z_{4,4}$ is there denoted by $(S^4)^4_{\delta}$.
Furthermore, in Section~\ref{sec:gap-th-5.1} we exhibit a gap in the proof of the second main (positive) result of the same paper, \cite[Thm.\,5.1]{zivaljevic2008}.

\subsection{Existence of equivariant maps}
Let $G$ be a finite group, let $X$ be a free $G$-CW complex and $W$ be an orthogonal real $G$-representation.
Let us further denote by $\mathrm{cohdim}\,X=\max\{i:H^i(X;\Z)\neq 0\}$ the \emph{cohomological dimension} of the space~$X$.

In this section we consider the existence of a $G$-equivariant map $X \longrightarrow S(W)$ under specific conditions and prove the following theorem.
\begin{theorem}
\label{theorem:existence_of_G-map}
Let $G$ be a finite group, let $X$ be a free $G$-CW complex, let $W$ be an orthogonal real $G$-representation, and let $I = \{i:\dim W-1\leq i\leq\dim X-1\}$.
If 
\begin{compactenum}[\rm (i)]
\item $2 \le \mathrm{cohdim}\,X < \dim W$, and
\item $\pi_i S(W)$ is a trivial $\Z[G]$-module for every $i \in I$,
\end{compactenum}
then there exists a $G$-equivariant map $X \longrightarrow S(W)$.
\end{theorem}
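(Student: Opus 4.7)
The plan is to apply equivariant obstruction theory (tom Dieck, Sec.~II.3) and build the desired $G$-equivariant map $f\colon X\to S(W)$ inductively over the $G$-skeleta. Write $n=\dim W$, so that $S(W)\cong S^{n-1}$ is $(n-2)$-connected. Because $X$ is free, $f$ can be defined by choosing values on orbit representatives of cells and propagating them equivariantly; and because $\pi_i S(W)=0$ for $i\leq n-2$, this procedure encounters no obstruction whatsoever up through the $(n-1)$-skeleton, producing an equivariant $f\colon X^{(n-1)}\to S(W)$.

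For each $k\in I$, the primary obstruction to extending the already-constructed $f$ over the $(k+1)$-cells is a class
\[
\mathfrak{o}_{k+1}(f)\in H^{k+1}_G\bigl(X;\pi_k S(W)\bigr).
\]
Hypothesis (ii) makes $\pi_k S(W)$ into a trivial $\Z[G]$-module, and freeness of $X$ identifies the cellular cochain complex as $\mathrm{Hom}_G(C_*(X),\pi_k S(W))=C^*(X/G;\pi_k S(W))$; hence
\[
H^{k+1}_G\bigl(X;\pi_k S(W)\bigr)\cong H^{k+1}\bigl(X/G;\pi_k S(W)\bigr).
\]
To force these groups to vanish I invoke hypothesis (i) in the form of a dimension-reduction step: by a cell-trading argument in the spirit of the Wall realisability theorem, $X$ is $G$-homotopy equivalent to a free $G$-CW complex $X'$ of dimension $c:=\mathrm{cohdim}\,X<n$. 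Then $\dim(X'/G)=c<n\leq k+1$, so every relevant obstruction group vanishes automatically. Constructing $f$ on $X'$ and transporting it back along $X\simeq_G X'$ yields the required $G$-equivariant map $X\to S(W)$.

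The principal obstacle will be realising the equivariant replacement $X\simeq_G X'$ with $\dim X'=c$: non-equivariantly this is Wall's realisability theorem (straightforward for $c\geq 3$), and equivariantly one has to control the interaction with the free $G$-action together with an equivariant finiteness obstruction. The lower bound $\mathrm{cohdim}\,X\geq 2$ in hypothesis~(i) is precisely the standard technical restriction excluding the pathological low-dimensional cases in such dimension-reduction theorems, and once the replacement is in place the rest is a routine instance of equivariant obstruction theory combined with the $(n-2)$-connectivity of $S(W)$.
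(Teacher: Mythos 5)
The first half of your argument coincides with the paper's: you set up the equivariant obstruction theory, use freeness of the action and the $(\dim W-2)$-connectivity of $S(W)$ to get the map on the $(\dim W-1)$-skeleton, and use hypothesis~(ii) together with freeness to identify each obstruction group with $H^{k+1}(X/G;\pi_kS(W))$ for a \emph{trivial} coefficient module. The gap is in how you make these groups vanish. You claim that $X$ is $G$-homotopy equivalent to a free $G$-CW complex $X'$ of dimension $c=\mathrm{cohdim}\,X$. This is not justified by the hypotheses and is in general false: here $\mathrm{cohdim}$ is defined using only constant $\Z$ coefficients, whereas a Wall-type dimension reduction of $X/G$ (which is what an equivariant reduction of $X$ amounts to) requires $H^i(X/G;\mathcal{B})=0$ for $i>c$ for \emph{all} local coefficient systems $\mathcal{B}$, together with $c\ge 3$ and the vanishing of a finiteness obstruction. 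Already non-equivariantly the claim fails: a compact acyclic $2$-complex $A$ with nontrivial fundamental group (e.g.\ the spine of the Poincar\'e homology sphere) satisfies $\mathrm{cohdim}(A\times S^2)=2$, yet $A\times S^2$ is not homotopy equivalent to any $2$-complex, because $H^4(A\times S^2;\mathcal{B})\neq 0$ for a suitable local system $\mathcal{B}$; crossing such examples with a skeleton of a contractible free $G$-complex produces free $G$-CW complexes for which your replacement $X'$ cannot exist. Your closing remark that the hypothesis $\mathrm{cohdim}\,X\ge 2$ is ``precisely'' the low-dimensional restriction in such reduction theorems is also off (Wall needs $c\ge3$, and $c=2$ is the delicate case); in the paper this hypothesis plays no such role.

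The point you are missing is that no dimension reduction is needed, because the obstruction groups carry only trivial coefficients. The paper kills them directly via a Smith-theory-type result (Bredon's exercise, stated as Lemma~\ref{lemma:existence_of_G-map}): for a finite group acting cellularly on a compact complex, $H^i(X;\Z)=0$ for all $i>c$ implies $H^i(X/G;\Z)=0$ for all $i>c$. This step is genuinely where hypothesis~(i) enters, and it is not automatic --- the naive transfer argument only yields the conclusion after inverting $|G|$ --- but it is far weaker than, and does not require, replacing $X$ by a lower-dimensional model. With that lemma, $H^{k+1}(X/G;\pi_kS(W))=0$ for all $k+1\ge\dim W>\mathrm{cohdim}\,X$, and the induction over skeleta closes. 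As written, your proof has a genuine gap at the dimension-reduction step.
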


The proof of the theorem will be obtained via equivariant obstruction theory, as presented by tom Dieck in \cite[Sec.\,II.3]{tom1987transformation}. 
In the proof of the theorem we use the following special case of a result given as an exercise by Bredon \cite[Exer.\,9,\,p.\,168]{bredon1972introduction}. 
It is an extension (for acyclicity above a certain dimension) of the important result from Smith theory that the quotient of a compact, acyclic space by a finite group action is still acyclic.

\begin{lemma}\label{lemma:existence_of_G-map}
  Let $G$ be a finite group acting cellularly on the compact $G$-CW-complex $X$.
  If $H^i(X;\Z)=0$ for all $i > n$, then $H^i(X/G; \Z) = 0$ for all $i > n$.
\end{lemma}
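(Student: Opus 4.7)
Since $X$ is a compact $G$-CW complex with $G$ finite, the orbit space $X/G$ inherits a finite CW structure (one cell per $G$-orbit of cells in $X$), so $H^*(X/G;\Z)$ is finitely generated in each degree. My plan is to verify the vanishing of $H^i(X/G;-)$ above degree $n$ with coefficients in $\mathbb{Q}$ and in $\F_p$ for every prime $p$, and then reassemble the integral statement: since $H^*(X/G;\Z)$ is finitely generated, vanishing of $H^i(X/G;\Z)\otimes\F_p$ (which follows from $H^i(X/G;\F_p)=0$ by the universal coefficient theorem) for every prime $p$ already forces $H^i(X/G;\Z)=0$.

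For $\mathbb{Q}$ coefficients, and for $\F_p$ coefficients with $p\nmid|G|$, I would use the averaging cochain map
\[
\tau\colon C^*(X;R)\longrightarrow C^*(X/G;R),\qquad \alpha\longmapsto \tfrac{1}{|G|}\sum_{g\in G}g^*\alpha,
\]
which is well-defined because $|G|$ is invertible in $R=\mathbb{Q}$ or $R=\F_p$ with $p\nmid|G|$. One has $\tau\circ\pi^*=\id$, identifying $H^*(X/G;R)$ with the $G$-invariants of $H^*(X;R)$. Together with UCT applied to the hypothesis $H^i(X;\Z)=0$ for $i>n$, this gives $H^i(X/G;R)=0$ for $i>n$.

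For $\F_p$ coefficients with $p\mid|G|$, averaging is unavailable. I would first transfer through a Sylow $p$-subgroup $P\le G$: the quotient map $X/P\to X/G$ admits a double-coset transfer whose composition with the pullback is multiplication by the index $[G:P]$, coprime to $p$; hence $H^*(X/G;\F_p)\hookrightarrow H^*(X/P;\F_p)$, reducing to the case that $G$ is a finite $p$-group. Next I would induct on $|G|$ via a central subgroup $Z\cong\Z/p$ (which exists in any non-trivial $p$-group), writing $X/G=(X/Z)/(G/Z)$: the induction hypothesis applied first to $Z$ acting on $X$, and then to the residual $(G/Z)$-action on $X/Z$, delivers the claim.

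The base case $G=\Z/p$ is the main obstacle. Here I would invoke Smith theory: Smith's theorem yields $H^i(X^G;\F_p)=0$ for $i>n$ from the hypothesis on $X$, and a classical Smith exact sequence (as in Bredon, Ch.~III) connecting $H^*(X;\F_p)$, $H^*(X^G;\F_p)$, and $H^*(X/G;\F_p)$ then forces $H^i(X/G;\F_p)=0$ for $i>n$. Apart from this Smith-theoretic input, all other ingredients (averaging transfer, Sylow reduction, induction via a central $\Z/p$, and UCT reassembly) are routine.
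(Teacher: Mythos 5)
Your outline is correct, but note that the paper does not actually prove this lemma: it is quoted as a special case of an exercise in Bredon's \emph{Introduction to Compact Transformation Groups} (Exer.~9, p.~168) and used as a black box. What you have written is essentially the intended solution of that exercise, so the comparison is between ``cite it'' and ``prove it.'' Your reduction chain (finite generation of $H^*(X/G;\Z)$ plus UCT to pass to field coefficients; averaging when $|G|$ is invertible; Sylow transfer and induction through a central $\Z/p$ to reduce to cyclic groups of prime order; Smith theory for the base case) is the standard route and I see no gap in it. Two small remarks. First, the $\mathbb{Q}$-coefficient step is redundant: for a finitely generated abelian group $A$, already $A\otimes\F_p=0$ for all primes $p$ forces $A=0$. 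Second, your averaging map $\tau$ as written lands in the $G$-invariant cochains $C^*(X;R)^G$ rather than in $C^*(X/G;R)$; to make the argument literally correct you should use cellular cochains and the identification $C^*(X/G;R)\cong C^*(X;R)^G$, which holds because $C_*(X;\Z)$ is a direct sum of permutation modules $\Z[G/H]$ for a $G$-CW complex (the same identification is what makes your Sylow transfer $\alpha\mapsto\sum_{gP\in G/P}g\cdot\alpha$ well defined even though $X/P\to X/G$ is not a covering). In the Smith-theoretic base case, be aware that the special exact sequences give the vanishing of $H^i(X/G,X^G;\F_p)=H^i_{\tau}(X;\F_p)$ and of $H^i(X^G;\F_p)$ for $i>n$ (the iteration terminating because $X$ is finite-dimensional), and one then concludes for $H^i(X/G;\F_p)$ via the long exact sequence of the pair $(X/G,X^G)$; this is exactly the extension of the classical ``quotient of an acyclic space is acyclic'' statement that the paper alludes to.
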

 
\begin{proof}[Proof of Theorem \ref{theorem:existence_of_G-map}]
Let us denote by $N=\dim X$, $n=\mathrm{cohdim}X$ and by $w=\dim W$.
For $i\in\{0,\ldots,N\}$, the $i$-th skeleton of $X$ is denoted as usual by $X^{(i)}$.

Since $S(W)$ is $(w-2)$-connected, $(w-1)$-simple and $X$ is a free $G$-CW complex there is no obstruction for the existence of a $G$-equivariant map $f \colon X^{(w-1)} \longrightarrow S(X)$. The proof continues by induction.

The first obstruction for the extension of the map $f$ to the $w$-skeleton $X^{(w)}$ lives in the specially defined Bredon type equivariant cohomology \cite[pp.\,111--114]{tom1987transformation}:
\[
 \mathcal{H}^w_G(X;\pi_{w-1}S(W))\cong \mathcal{H}^w_G(X;\Z),
\]
Now $\pi_{w-1}S(W)\cong\Z$ is a trivial $\Z[G]$-module by the assumption of the theorem.
The isomorphism of \cite[II,\,Prop.\,9.7,\,\rm{(ii)}]{tom1987transformation} implies $\mathcal{H}^w_G(X;\Z)\cong H^w(X/G;\Z)$, where on the right we have singular cohomology. 
Since $n=\mathrm{cohdim}X < w$, by the assumption of the theorem, an application of Lemma~\ref{lemma:existence_of_G-map} gives $\mathcal{H}^w_G(X;\Z)= 0$. 
Thus $\mathcal{H}^w_G(X;\pi_{w-1}S(W))=0$, and the map $f$ can be $G$-equivariantly extended to the $w$-skeleton of $X$.

The process continues in the same way until we reach the $N$-th skeleton of $X$ since all the ambient groups $\mathcal{H}^i_G(X;\pi_{i-1}S(W))$, $i\in\{w,\ldots N\}$, for the obstructions vanish. 
\end{proof}

\subsection{Proof of Theorem~\ref{theorem_existence_W_k_maps}}\label{subsec:Proof_of_Thm6.1}
Let $d\geq k\geq 3$ and
\[
(2^k-1)j+2\geq \max\{dk,dk+4-k\}.
\]
We prove the existence of an $\Wk_k$-equivariant map 
$Z_{d,k}\longrightarrow S(U_{k}^{\oplus j})$ by direct application 
of Theorem~\ref{theorem:existence_of_G-map}.

Let $X$ be a $dk$-dimensional $\Wk_k$-CW complex with the property 
that $X\subseteq Z_{d,k}$ is an equivariant deformation retract of $Z_{d,k}$.  
Then $X$ is a $dk$-dimensional free $\Wk_k$-CW complex and it suffices to prove that there exists an $\Wk_k$-equivariant map $X\longrightarrow S(U_{k}^{\oplus j})$.

If $dk=\dim X\leq \dim S(U_{k}^{\oplus j}) =(2^k-1)j-1$ then an $\Wk_k$-equivariant map $X\longrightarrow S(U_{k}^{\oplus j})$ exists since $X$ is a free $\Wk_k$-CW complex and all obstructions vanish. 
Thus we can in addition assume that $dk-1\geq (2^k-1)j-1$.
Now 
\[
I = \{i:\dim W-1\leq i\leq\dim X-1\}=\{i: (2^k-1)j-1 \leq i\leq dk-1\}.
\]
Since $(2^k-1)j+2\geq\max\{dk,dk+4-k\}$ and $dk-1\geq (2^k-1)j-1$ we have that $|I|\leq 3$, i.e.,
\[
I\subseteq \{(2^k-1)j-1, (2^k-1)j, (2^k-1)j+1\}.
\]

The following fact is known. For completeness we give a brief proof.

\begin{claim*}
$\mathrm{cohdim}\,Z_{d,k}=(d-1)k+1$ for $d\geq k\geq 3$.
\end{claim*}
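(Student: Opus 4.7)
Plan: The approach is induction on $k \ge 1$ based on the Fadell--Neuwirth-type projection
\[
p\colon Z_{d,k} \longrightarrow Z_{d,k-1},\qquad (x_1,\ldots,x_k)\longmapsto (x_1,\ldots,x_{k-1}),
\]
which is a locally trivial fiber bundle whose fiber over $(x_1,\ldots,x_{k-1})$ is $F = S^d\setminus\{\pm x_1,\ldots,\pm x_{k-1}\}$. Removing $m \ge 1$ points from $S^d$ yields a space homotopy equivalent to a wedge of $m-1$ copies of $S^{d-1}$, so $F \simeq \bigvee^{2k-3} S^{d-1}$ for $k \ge 2$. Consequently $H^q(F;\Z)$ vanishes except for $q\in\{0,d-1\}$, where it takes values $\Z$ and $\Z^{2k-3}$ respectively. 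The base case $k=1$ is immediate: $Z_{d,1}=S^d$ has cohomological dimension $d = (d-1)\cdot 1 + 1$.

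For the upper bound in the inductive step, I would apply the Leray--Serre spectral sequence
\[
E_2^{p,q}=H^p\bigl(Z_{d,k-1};\,\mathcal{H}^q(F;\Z)\bigr)\Longrightarrow H^{p+q}(Z_{d,k};\Z).
\]
Its $E_2$-page is supported on the two rows $q=0$ and $q=d-1$, with columns bounded by $p\le(d-1)(k-1)+1$ through the inductive hypothesis. Thus $H^n(Z_{d,k};\Z)=0$ for $n>(d-1)k+1$, which yields $\mathrm{cohdim}\,Z_{d,k}\le (d-1)k+1$. Moreover, in a two-row spectral sequence only the differential $d_d\colon E_d^{p,d-1}\to E_d^{p+d,0}$ can be non-trivial, and direct index bookkeeping shows that the top-corner term $E_2^{(d-1)(k-1)+1,\,d-1}$ is neither a source nor a target of any non-zero differential, so it persists to $E_\infty$.

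For the lower bound it therefore suffices to verify that this surviving top-corner term
\[
H^{(d-1)(k-1)+1}\bigl(Z_{d,k-1};\,\mathcal{H}^{d-1}(F;\Z)\bigr)
\]
is non-zero. Passing to $\Z/2$-coefficients to eliminate orientation issues in the monodromy, one can reduce this to the inductive hypothesis for $Z_{d,k-1}$ together with the observation that the local system $\mathcal{H}^{d-1}(F;\Z/2)\cong(\Z/2)^{2k-3}$ contains a $\pi_1(Z_{d,k-1})$-invariant summand (for example, the ``total class'' obtained by summing over the $2k-3$ punctures of $F$, which is preserved by the permutation action induced by the monodromy). The universal coefficient theorem then transfers non-vanishing from $\Z/2$ to $\Z$ coefficients, giving $H^{(d-1)k+1}(Z_{d,k};\Z)\neq 0$.

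The main obstacle is the final step: confirming that the twisted top cohomology persists. This relies on controlling the non-trivial monodromy of $p$ on the set of $2k-3$ punctures. I expect the cleanest way around it is to exploit that $Z_{d,k-1}$ is an open orientable manifold of dimension $d(k-1)$ and invoke Poincar\'e--Lefschetz duality to reinterpret the top-corner group in terms of compactly-supported classes localized near a generic point, or equivalently to pass to a finite cover trivializing the permutation action and apply a transfer. The hypothesis $d \ge k \ge 3$ ensures the connectivity and transitivity needed to run either of these arguments.
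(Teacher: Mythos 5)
Your route---the Fadell--Neuwirth projection $Z_{d,k}\to Z_{d,k-1}$ with fiber $F=S^d$ minus $2(k-1)$ points, followed by the Serre spectral sequence and induction on $k$---is genuinely different from the paper's argument, which never fibers $Z_{d,k}$: there one shows, via the nerve lemma and the type-$B$ partition lattice, that the discriminant $Y_{d,k}^{>1}$ is $(k-3)$-connected with $H^{k-2}\neq 0$ free abelian, and then reads off $H_*(Z_{d,k};\Z)$ from Poincar\'e--Lefschetz duality $H_{dk-i}(Z_{d,k};\Z)\cong H^i(Y_{d,k},Y_{d,k}^{>1};\Z)$ and the long exact sequence of the pair. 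Your approach is essentially that of Feichtner--Ziegler \cite{feichtner} and can be made to work, but as written it does not close. You yourself flag the decisive step---non-vanishing of $H^{(d-1)(k-1)+1}(Z_{d,k-1};\mathcal H^{d-1}(F;\Z))$---as ``the main obstacle'' and offer only tentative fixes, and these are not sound as stated: the fiber has $2k-2$ punctures (not $2k-3$); the ``total class'' obtained by summing the puncture classes is exactly the class killed by the relation in $H_{d-1}(F)\cong\tilde H_0(\{\text{punctures}\})$, so it is not a usable invariant generator; and even a genuine invariant sub-local-system need not contribute injectively to cohomology. There is a second, quieter gap on the upper-bound side: your inductive hypothesis is vanishing of cohomology with \emph{constant} $\Z$ coefficients above degree $(d-1)(k-1)+1$, which for a genuinely twisted system $\mathcal H^{d-1}(F;\Z)$ does not by itself give the vanishing of the columns $E_2^{p,d-1}$ for $p>(d-1)(k-1)+1$ that your argument uses.

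Both gaps disappear once you notice that there is no monodromy to control: for $d\ge 3$ (guaranteed by $d\ge k\ge 3$) the base $Z_{d,k-1}$ is simply connected, being the complement of a finite union of codimension-$d$ submanifolds $\{x_s=\pm x_r\}$ in the simply connected manifold $(S^d)^{k-1}$ (indeed it is $(d-2)$-connected by \cite{feichtner}). Hence the local system $\mathcal H^{d-1}(F;\Z)$ is constant, equal to $\Z^{2k-3}$, so $E_2^{p,d-1}\cong H^p(Z_{d,k-1};\Z)^{\oplus(2k-3)}$; the inductive hypothesis then gives both the vanishing for $p>(d-1)(k-1)+1$ and the non-vanishing of the top corner, which survives to $E_\infty$ by your (correct) two-row analysis and is the only contribution in total degree $(d-1)k+1$. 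With that observation inserted, and the induction anchored at $k=1,2$ where the formula $(d-1)k+1$ also holds, your argument becomes a valid alternative to the paper's duality proof; what the paper's route buys instead is that it needs no induction and no fibration, extracting both bounds in one step from the connectivity of $Y_{d,k}^{>1}$.
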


\begin{proof}
The free configuration space $Z_{d,k}$ is defined as a difference $Y_{d,k}{\setminus}Y_{d,k}^{>1}$ of an oriented $dk$-manifold $Y_{d,k}=(S^d)^k$ and the regular CW-complex $Y_{d,k}^{>1}$.

The CW-complex $Y_{d,k}^{>1}$ can be covered by a family 
\[
\mathcal{L}=\{L_{s,r}^{+}:1\leq s<r\leq k\}\cup\{L_{s,r}^{-}:1\leq s<r\leq k\}
\]
of subcomplexes 
\[
Y_{d,k}^{>1}=\bigcup_{1\leq s<r\leq k}
\big(L_{s,r}^{+}\cup L_{s,r}^{-}\big),
\]
where for $1\leq s<r\leq k$ we set
\[
L_{s,r}^{+}=\{(x_1,\ldots,x_k)\in Y_{d,k}: x_s=x_r\},
\quad
L_{s,r}^{-}=\{(x_1,\ldots,x_k)\in Y_{d,k}: x_s=-x_r\}.
\]
Every subcomplex  $L_{s,r}^{\pm}$ as well as any finite non-empty intersection of them is $(d-1)$-connected.
Therefore, by a version of the nerve lemma \cite[Th.\,6]{bjorner2003}, we have that
$\pi_r(Y_{d,k}^{>1})\cong \pi_r(\Delta(P_{\mathcal{L}}))$ for all $r\leq d-1$, where $\Delta(P_{\mathcal{L}})$ denotes the order complex of the intersection poset $P_{\mathcal{L}}$ of the family $\mathcal{L}$.
The intersection poset $P_{\mathcal{L}}$ can be identified as a subposet of the type $B$ partition lattice $\Pi_k^B$, consult Wachs \cite[Ex.\,5.3.6]{Wachs2007}. 
Moreover, $\Pi_k^B$ is a geometric semilattice, which implies that $\Delta(P_{\mathcal{L}})\simeq\bigvee S^{k-2}$.
Thus $Y_{d,k}^{>1}$ is $(k-3)$-connected.

The Poincar\'e--Lefschetz duality \cite[Cor.\,VI.8,4]{bredon2010} relates the homology of $Z_{d,k}$ with the cohomology of the pair $(Y_{d,k},Y_{d,k}^{>1})$:
\[
H_{dk-i}(Z_{d,k};\Z)\cong H^i(Y_{d,k},Y_{d,k}^{>1};\Z).
\]
Using the long exact sequence in cohomology for the pair $(Y_{d,k},Y_{d,k}^{>1})$ and the facts that $Y_{d,k}$ is $(d-1)$-connected and $Y_{d,k}^{>1}$ is $(k-3)$-connected we get that $\tilde{H}^i(Y_{d,k},Y_{d,k}^{>1};\Z)=0$ for $i\leq k-2$ and $H^{k-1}(Y_{d,k},Y_{d,k}^{>1};\Z)\cong H^{k-2}(Y_{d,k}^{>1};\Z)\neq 0$ is free abelian.
Consequently, using the universal coefficient theorem \cite[Cor.\,V.7.2]{bredon2010}, we conclude that $\mathrm{cohdim}\,Z_{d,k}=(d-1)k+1$.
\end{proof}

In order to apply Theorem~\ref{theorem:existence_of_G-map} and complete the proof we need to verify the conditions {\rm (i)} and {\rm (ii)}.
\begin{compactenum}[\rm (i)]
\item By assumption $(2^k-1)j+2\geq \max\{dk,dk+4-k\}$ and $k\geq 3$.
Consequently, 
\[
\quad\dim U_k^{\oplus j} = (2^k-1)j>dk-1 = \dim X-1 > (d-1)k+1= \mathrm{cohdim}\,Z_{d,k}.
\]
\item Since $S(U_k^{\oplus j})\approx S^{(2^k-1)j-1}$ and $I\subseteq \{(2^k-1)j-1, (2^k-1)j, (2^k-1)j+1\}$ we consider
\[
\quad \
\pi_{(2^k-1)j-1}S(U_k^{\oplus j})\cong\Z,\ \
\pi_{(2^k-1)j}S(U_k^{\oplus j})\cong\Z/2,\ \
\pi_{(2^k-1)j+1}S(U_k^{\oplus j})\cong\Z/2
\]
as $\Z[\Wk_k]$-modules. 
Since the second two groups are $\Z/2$ and therefore trivial $\Z[\Wk_k]$-modules it remains to be shown that $\Wk_k$ acts orientation preserving on $S(U_k^{\oplus j})$.\newline
Each of the generators  $\varepsilon_i$ of $(\Z/2)^k$ acts on the top integral homology of the sphere $S(U_k^{\oplus j})$ by multiplication with 
\[
(-1)^{j\big(\binom{k-1}{0}+\binom{k-1}{1}+\cdots+\binom{k-1}{k-1}\big)}=1.
\]
Furthermore, each of the transpositions $\tau_{sr}=(sr)$ for $1\leq s<r\leq k$, 
which generate $\Sym_k$, acts on the top integral homology of the sphere $S(U_k^{\oplus j})$ by multiplication with
\[
(-1)^{j\big(\binom{k-2}{0}+\binom{k-2}{1}+\cdots+\binom{k-2}{k-2}\big)}=1.
\]
Thus $\Wk_k$ preserves orientation of $S(U_k^{\oplus j})$ and consequently $\pi_{(2^k-1)j-1}S(U_k^{\oplus j})$ is a trivial $\Z[\Wk_k]$-module.
\end{compactenum}
Now Theorem~\ref{theorem:existence_of_G-map} implies the existence of 
an $\Wk_k$-equivariant map $Z_{d,k}\longrightarrow S(U_{k}^{\oplus j})$, and we 
have completed the proof of Theorem~\ref{theorem_existence_W_k_maps}.

\subsection{Gaps in~\cite{zivaljevic2008}}
In this section we exhibit and explain essential gaps in \cite{zivaljevic2008} that invalidate 
 \v{Z}ivaljevi\'c's proofs for both main results of that paper.

\subsubsection{A Gap in \cite[Lem.\,4.3]{zivaljevic2008}}
\label{sec:gap-lemma-4.3}
We note that this lemma is the starting point
for the explicit calculations related to both main results of that paper
and thus crucial for their validity.

\noindent
First we recall some notation from \cite{zivaljevic2008}:
\begin{compactitem}[~~$\bullet$]
\item $(S^n)^n_{\delta}=\{x\in (S^n)^n : x_i\neq \pm x_j\text{ for }i\neq j\}$, consult \cite[(2.2)]{zivaljevic2008}; in notation of the current paper this is $(S^n)^n_{\delta}=Z_{n,n}$.
\item $(S^4)^4_{\delta}/\Wk_4 =: SP^4_{\delta}(\RP^4)\subseteq SP^4(\RP^ 4)$ where $SP^m(X)=X^m/\Sym_m$ denotes the symmetric product of $X$, consult \cite[Prop.\,3.1]{zivaljevic2008}.
\end{compactitem}
The following statement is claimed to be ``an easy consequence of Poincar\'e duality''; the homology is considered with coefficients in the field $\Z/2$.
\begin{quote}
{\small \cite[{\bf Lem.\,4.3}]{zivaljevic2008}\newline \emph{There is an isomorphism $H_2(SP^4_{\delta}(\RP^4))\longrightarrow H_2(SP^4(\RP^4))$ of homology groups, induced by the inclusion map $SP^4_{\delta}(\RP^4)\lhook\joinrel\longrightarrow SP^4(\RP^4)$.}}
\end{quote}
Further on, it was claimed that
\begin{multline*}
H_2(SP^4(\RP^4))\cong H_2(SP^4(\RP^{\infty}))\cong \\
H_2(K(\Z/2,1)\times K(\Z/2,2)\times K(\Z/2,3)\times K(\Z/2,4))\cong \Z/2\oplus\Z/2.
\end{multline*}
Now we prove that $H_2(SP^4_{\delta}(\RP^4))$ is not isomorphic to $\Z/2\oplus\Z/2$.
Indeed, there is a sequence of isomorphisms 
\[\begin{array}{r@{\ }c@{\ }l@{\ }l}
H_2(SP^4_{\delta}(\RP^4))  
&\cong & H^2(SP^4_{\delta}(\RP^4))&\text{\quad by the Universal Coefficient Theorem,}\\
&\cong & H^2((S^4)^4_{\delta}/\Wk_4)&\text{\quad by definition of }SP^4_{\delta}(\RP^4),\\
&\cong & H^2(\E \Wk_4\times_{\Wk_4}(S^4)^4_{\delta})&\text{\quad since the action of }\Wk_4\text{ is free,}\\
&\cong & H^2(\Wk_4)&\text{\quad since }(S^4)^4_{\delta}\text{ is }2\text{-connected \cite{feichtner}.}
\end{array}
\]
A result of Nakaoka \cite[Thm.\,5.3.1]{evens1991} combined with   
$H^2(\Sym_4)\cong \Z/2\oplus\Z/2$ \cite[Ex.\,VI.1.13]{adem_milgram2004} implies that
\begin{eqnarray*}
H^2(\Wk_4) 
&\cong & \bigoplus_{p=0}^2H^p(\Sym_4,H^{2-p}((\Z/2)^4))\\
&\cong & H^0(\Sym_4,H^2((\Z/2)^4))\oplus H^1(\Sym_4,H^1((\Z/2)^4))\oplus H^2(\Sym_4,H^0((\Z/2)^4))\\
&\cong & H^2((\Z/2)^4)^{\Sym_4} \oplus H^1(\Sym_4,H^1((\Z/2)^4))\oplus  H^2(\Sym_4)\\
&\cong & \Z/2\oplus\Z/2 \oplus H^1(\Sym_4,H^1((\Z/2)^4))\oplus  \Z/2\oplus\Z/2.
\end{eqnarray*}
Thus $H_2(SP^4_{\delta}(\RP^4))$ is not isomorphic to $\Z/2\oplus\Z/2$ and therefore \cite[Lem.\,4.3]{zivaljevic2008} is not true.

\subsubsection{A Gap in the proof of \cite[Thm.\,5.1]{zivaljevic2008}}
\label{sec:gap-th-5.1}
Here we discuss a gap in the proof of the following theorem, the second main result in \cite{zivaljevic2008}.  

\begin{quote}
{\small
\cite[{\bf Theorem 5.1}]{zivaljevic2008}\newline \emph{Suppose that $\mu$ is a measure on $\R^4$ admitting a $2$-dimensional plane of symmetry in the sense that for some $2$-plane $L\subset\R^4$ and the associated reflection $R_L\colon\R^4\longrightarrow\R^4$, for each measurable set $A\subset\R^4$, $\mu(A)=\mu(R_L(A))$.
Then $\mu$ admits a $4$-equipartition.}}
\end{quote}

\noindent
The proof of the theorem is based on \cite[Claim on p.~165]{zivaljevic2008}.
For convenience we copy the claim with the first two sentences of its proof 
from~\cite{zivaljevic2008}.

\begin{quote}
{\small
\cite[{\bf Claim on p.~165}]{zivaljevic2008} \newline \emph{There does not exist a $G$-equivariant map $f\colon (S^4)^4_{\Delta}\longrightarrow S(U_4\oplus\lambda)$, where $S(U_4\oplus\lambda)$ is the $G$-invariant unit sphere in $U_4\oplus\lambda$. 
In other words each $G$-invariant map $f\colon (S^4)^4_{\Delta}\longrightarrow U_4\oplus\lambda$ has a zero.}\\[1mm]
\emph{Proof of the Claim.} The claim is equivalent to the statement that the vector bundle $\xi\colon (S^4)^4_{\Delta}\times_G (U_4\oplus\lambda)\longrightarrow (S^4)^4_{\Delta}/G$ does not admit a non-zero continuous cross section.
For this it is sufficient to show that the top Stiefel--Whitney class $w_n(\xi)$ is non-zero.
}
\end{quote}

\noindent
The group $G$ is the direct sum $\Wk_4\oplus\Z/2$, and $(S^4)^4_{\Delta}$ is the largest subspace of $(S^4)^4$ on which the group $G$ acts freely.
The base space $(S^4)^4_{\Delta}/G$ of the vector bundle $\xi$ is an open manifold of dimension~$16$.
The real $G$-representation  $U_4\oplus\lambda$ is $16$-dimensional and therefore $\xi$ is a $16$-dimensional vector bundle.
Thus the top Stiefel--Whitney class $w_{16}(\xi)$ lives in $H^{16}((S^4)^4_{\Delta}/G;\Z/2)=0$ and so it vanishes. 
This contradicts the proof of the claim.

\smallskip
\noindent
Actually, more is true: Since $\xi$ is a $16$-dimensional vector bundle over a connected non-compact $16$-dimensional manifold $(S^4)^4_{\Delta}/G$, an exercise from
Koschorke~\cite[Exer.\,3.11]{koschorke1981vector} guarantees the existence of a nowhere vanishing cross section, again contradicting the proof of the claim.

\section{A gap in~\cite{ramos1996equipartition}}
\label{sec:ramos}

In this section we will give a counterexample to \cite[Lem.\,6.2]{ramos1996equipartition}, from which Ramos derives his main result \cite[Thm.\,6.3]{ramos1996equipartition} by induction. Our Counterexample~\ref{ex_counterexample_lemma_6_2} exploits the fact that a certain coordinate permutation action has fixed points, a crucial fact that is missed in the proof of \cite[Lem.\,6.2]{ramos1996equipartition}. 

The following table lists bounds for $\Delta(j,k)$ that are obtained directly from \cite[Thm.\,6.3]{ramos1996equipartition}. 
They cannot be obtained from \cite[Thm.\,4.6]{ramos1996equipartition} or any other result in his article.  

\begin{table}[ht] 
\begin{center}
\begin{tabular}{l c l l l }
  $\Delta(2^m,2)$ & $\le$ & $3 \cdot 2^m/2$& \\
  $\Delta(2^m,3)$ & $\le$ & $5 \cdot 2^m/2$& \\
 $\Delta(2^m,4)$ & $\le$ & $9 \cdot 2^m/2$& &\\
 $\Delta(2^m,5)$ & $\le$ & $15 \cdot 2^m/2$& & \\
\end{tabular}
\end{center}
\caption{Table taken from \cite[page 164]{ramos1996equipartition}. Here $m\geq 0$.}
\label{fig_ramos_upper_bounds}
\end{table}

In order to clarify Ramos' approach, we will describe his initial configuration space, which he modifies twice. The second modification is the basis for \cite[Lem.\,6.2]{ramos1996equipartition}. 
Given a dimension $d \geq 1$ and a number of hyperplanes $k \geq 1$ and masses $\mu_1, \dots, \mu_j$, the initial configuration space is defined as 
\[
B^{d-1} \times \dots \times B^{d-1} = B^{k(d-1)}.
\]
Here $B^{d-1}$ is regarded as the upper hemisphere of $S^{d-1}$, where each sphere $S^{d-1}$ is the space of all normal vectors to hyperplanes in $\R^{d}$ that bisect the first mass $\mu_1$. 
It is implicitly assume that the mass $\mu_1$ has a unique bisector in each direction.
In order for his first result \cite[Thm.\,4.6]{ramos1996equipartition} to hold, 
Ramos makes a first restricition to the configuration space:
\[
B^{n_1} \times \dots \times B^{n_k} \subseteq B^{k(d-1)},
\]
where $n_i \le d-1$ for all $i=1, \dots,k$ and $\sum n_i = (2^k-1)j-k$ \cite[Sec.\,4]{ramos1996equipartition}. 
Note that \cite[Thm.\,4.6]{ramos1996equipartition} does not yield the upper bounds in 
Table~\ref{fig_ramos_upper_bounds}. 

Let $\mu_1, \dots, \mu_j$ be masses on $\R^d$. 
For $x=(x_1, \dots, x_k) \in B^{n_1} \times \dots \times B^{n_k}$ and $i \in [k]$, let $H(x_i)$ be the unique hyperplane in $\R^d$ with normal vector $x_i$ that bisects the first mass $\mu_1$, where we regard the $x_i$ in $\R^d$ via the inclusions $B^{n_i} \subseteq B^{d-1} \lhook\joinrel\longrightarrow S^{d-1}$.  
For $\alpha \in \{0,1\}$, let $H^\alpha(x_i)$ be the positive (if $\alpha=0$) respectively negative (if $\alpha= 1$) closed half-space defined by $H(x_i)$. Observe the difference in notation to $H_{x_i}$, which we used to denote the affine hyperplane cooresponding to a point $x_i$ in the sphere $S^{d}$ of one dimension higher.

Ramos defines the  test map
\[
\xymatrix@1{
\Phi \colon B^{n_1} \times \dots \times B^{n_k}\ \ar[r]^-{\psi} & \ (\R^{2^k})^{\oplus j} \ \ar[r]^-{U\oplus \dots \oplus U} &  \ (\R^{2^k})^{\oplus j} \ \ar[r]^-{\pi} & \ (\R^{2^k-1})^{\oplus j}=U_k^{\oplus j},
}
\]
by
\[
\xymatrix@1{
(x_1, \dots, x_k) \ \ar@{|->}[r]^-{\psi}  & \ \Big(\mu_1\big(\bigcap_{i=1}^k H^{\alpha_i}(x_1)\big),\dots, \mu_j\big(\bigcap_{i=1}^k H^{\alpha_i}(x_k)\big)\Big)_{(\alpha_1,\dots,\alpha_k) \in (\Z/2)^k}.
}
\]
The map $\psi$ is followed by an orthogonal coordinate transformation $U \oplus \dots \oplus U=U^{\oplus j}$ given by the matrix
\[
U = \big( \epsilon_{i_1,\dots, i_k}^{j_1, \dots, j_k} \big) \quad \text{for} \quad (i_1,\dots,i_k),(j_1,\dots,j_k) \in (\Z/2)^k,
\]
where
\[
\epsilon_{i_1,\dots, i_k}^{j_1, \dots, j_k} = (-1)^b \quad \text{and} \quad b = (i_1,\dots,i_k)^t(j_1,\dots,j_k).
\]
The map $\pi$ chops off the coordinates of $(U \oplus \dots \oplus U) \circ \phi$ corresponding to the row of $U$ with index $(i_1, \dots, i_k) = (0,\dots,0)$. In these coordinates,  $(U \oplus \dots \oplus U) \circ \phi$ is constant and equal to $1$, since the value of such a coordinate is the sum, for a fixed mass, of the masses of all of the orthants. The map $\Phi$ can be viewed as a map to a $((2^k -1)j -k)$-dimensional subspace of $(\R^{2^k-1})^{\oplus j} \cong U_k^{\oplus j}$ since the map $\Phi$ has $k$ zero-components due to the fact that all hyperplanes bisect the first mass by definition. 

\begin{proposition}[{\cite[Property\,4.4]{ramos1996equipartition}}]
Let $x=(x_1,\dots,x_k) \in \R^d$, then $\Phi(x) =0$ if and only if the hyperplanes $H(x_i)
\subset \R^d$ with normal vectors $x_i$ that bisect the first mass~$\mu_1$  form an equipartition of the masses $\mu_1, \dots, \mu_j$. Moreover, if $\Phi(x)= 0$, then $\Delta(j,k) \le d$.
\end{proposition}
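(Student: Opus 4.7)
The plan is to verify the equivalence directly by recognizing the matrix $U$ as the Hadamard/character matrix of $(\Z/2)^k$. For a fixed $x=(x_1,\dots,x_k)$, I would introduce the orthant masses
\[
c_\alpha^{(m)} := \mu_m\Big(\bigcap_{i=1}^k H^{\alpha_i}(x_i)\Big),\qquad \alpha\in(\Z/2)^k,\ m\in\{1,\dots,j\},
\]
so that $\psi(x)=(c_\alpha^{(m)})_{\alpha,m}$. By definition, the hyperplanes $H(x_1),\dots,H(x_k)$ equipart $\mu_1,\dots,\mu_j$ precisely when $c_\alpha^{(m)}=1/2^k$ for every $\alpha$ and every $m$.

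The key computation is that applying $U$ to $\psi(x)$ produces
\[
\big(U\psi(x)\big)_{\beta,m}\ =\ \sum_{\alpha\in(\Z/2)^k}(-1)^{\langle \alpha,\beta\rangle}\, c_\alpha^{(m)},
\]
i.e.\ the $\beta$-th Fourier coefficient of the function $\alpha\mapsto c_\alpha^{(m)}$ on the group $(\Z/2)^k$. The row $\beta=(0,\dots,0)$ that $\pi$ discards is the all-ones row, giving the constant total mass $\sum_\alpha c_\alpha^{(m)}=\mu_m(\R^d)=1$; this coordinate carries no information about equipartitions, which justifies the projection. For $\beta\neq(0,\dots,0)$, the simultaneous vanishing of all such Fourier coefficients (at fixed $m$) is equivalent, by orthogonality of characters, to the function $\alpha\mapsto c_\alpha^{(m)}$ being constant, and the known total mass $1$ then forces that constant to equal $1/2^k$. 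Thus $\Phi(x)=\pi\circ(U\oplus\cdots\oplus U)\circ\psi(x)=0$ if and only if every orthant has $\mu_m$-mass $1/2^k$ for every $m$, which is the asserted equivalence.

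For the ``moreover'' clause, the intended reading is the standard configuration space/test map conclusion: if for every collection $\mathcal M=(\mu_1,\dots,\mu_j)$ of $j$ masses on $\R^d$ the associated test map $\Phi=\Phi_{\mathcal M}$ has a zero on $B^{n_1}\times\cdots\times B^{n_k}$, then by the first part every such collection admits an equipartition by $k$ hyperplanes, yielding $\Delta(j,k)\le d$. The existence of such a zero for every $\mathcal M$ is then to be established by topological means (non-existence of a suitable equivariant map into the complement of $0$), which is precisely the program Ramos's subsequent lemmas attempt to carry out.

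I do not expect any serious obstacles here: the only substantive input is the orthogonality of characters of $(\Z/2)^k$, and the rest is bookkeeping. As a sanity check, for $m=1$ the bisection property $\mu_1(H^0(x_i))=\mu_1(H^1(x_i))=1/2$ forces $(U\psi(x))_{e_i,1}=0$ for each standard basis vector $e_i\in(\Z/2)^k$; this accounts for the $k$ automatically-zero coordinates noted after the definition of $\Phi$ and explains why the effective codomain dimension drops from $(2^k-1)j$ to $(2^k-1)j-k$.
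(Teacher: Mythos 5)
Your argument is correct. The paper itself states this proposition without proof, as a quotation of Ramos's Property~4.4 within its review of his construction, so there is no proof to compare against; your identification of $U$ as the character (Hadamard) matrix of $(\Z/2)^k$, so that $\Phi(x)=0$ forces all nontrivial Fourier coefficients of $\alpha\mapsto\mu_m(\mathcal{O}_\alpha)$ to vanish and hence each orthant function to be constant equal to $1/2^k$, is exactly the standard justification, and your sanity check correctly accounts for the $k$ automatically vanishing components coming from the bisection of $\mu_1$. You are also right that the ``moreover'' clause is only meaningful under the universal reading (a zero of $\Phi_{\mathcal M}$ for \emph{every} collection $\mathcal M$ of $j$ masses on $\R^d$ yields $\Delta(j,k)\le d$), which is how Ramos and the present paper use it.
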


In the following definition, Ramos introduces the notion of a map that is equivariant on the boundary of the domain and calls this \emph{antipodal}. For this we let~$(\Z/2)^k$ act antipodally on the boundary of $B^{n_1} \times \dots \times B^{n_k}$.

\begin{definition}[{\cite[p.\,151]{ramos1996equipartition}}]
A continuous map $f\colon B^{n_1} \times \dots \times B^{n_k} \longrightarrow \R^{(2^k-1)j}$ is \emph{antipodal in the $m$-th component with respect to the $n$-th ball with antipodality $a_{pq} \in \{0,1\}$}, for $q \in [k]$ and $p \in [(2^k-1)j]$, if
\begin{multline*}
f(x_1,\dots, -x_q,\dots, x_k) = (-1)^{a_{pq}} f_p(x_1,\dots, x_q,\dots, x_k)  \\ \text{ for all } (x_1, \dots, x_k) \in B^{n_1} \times \dots \times S^{n_q -1} \times \dots \times B^{n_k}.
\end{multline*}
Call $f$ \emph{antipodal} if $f$ is antipodal in all components with respect to all balls. In this case we call $A = (a_{pq})_{p,q} \; \in \R^{(2^k-1)j \times k}$ the \emph{antipodality matrix} of $f$. 
\end{definition}

Using the antipodality matrix $A$, we define an action of $(\Z/2)^k$ on $\R^{(2^k-1)j}$ by letting the generators of $(\Z/2)^k$ act by changing the signs of vectors in $\R^{(2^k-1)j}$ according to the columns of $A$. In this restricted sense, $f$ is equivariant on the boundary of $B^{n_1} \times \dots \times B^{n_k}$.

\begin{proposition}[{\cite[Property 4.3]{ramos1996equipartition}}] \label{prop_antipod_matrix_of_test_map}
The test map $\Phi$ is antipodal. Its antipodality in the component with index $(i_1,\dots, i_q, \dots, i_k)$ with respect to the $q$-th ball is $i_q$ (for any mass). Hence, the rows of $A$ are precisely all $0/1$-vectors of length $k$, each repeated $j$ times, up to some re-ordering of the rows that depends only the labeling of the components of $\Phi$. If we regard $\Phi$ as mapping into  $U_k^{\oplus j}$, then $A$ consists of all $0/1$-vectors of length $k$ except of  $(0,\dots,0)$, each repeated $j$ times.
\end{proposition}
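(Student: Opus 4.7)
My plan is to track the effect of a single sign flip $x_q \mapsto -x_q$ (for $x_q$ on the boundary $S^{n_q-1}$ of the $q$-th ball) through the three building blocks of $\Phi = \pi \circ (U\oplus\cdots\oplus U) \circ \psi$ in turn. First I would verify that for every $x_q \in S^{n_q-1}$ the affine hyperplanes $H(x_q)$ and $H(-x_q)$ coincide as subsets of $\R^d$ but carry opposite orientations; this uses that the bisecting hyperplane with prescribed normal direction is uniquely determined for a mass with compact connected support (applied here to $\mu_1$). The immediate consequence is the orientation identity $H^{\alpha}(-x_q) = H^{\alpha+1}(x_q)$ with addition in $\Z/2$. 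Substituting this into the definition of $\psi$ gives, on the boundary of the $q$-th factor,
\begin{equation*}
\psi_{(\alpha,\ell)}(x_1,\ldots,-x_q,\ldots,x_k) = \psi_{(\alpha+e_q,\ell)}(x_1,\ldots,x_q,\ldots,x_k),
\end{equation*}
where $e_q \in (\Z/2)^k$ is the $q$-th unit vector and $\ell \in \{1,\ldots,j\}$ indexes the mass; that is, the sign flip acts on the components of $\psi$ by translation of the index $\alpha$ by $e_q$.

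Next I would diagonalize this index translation using the Hadamard-type matrix $U$. Writing $(U^{\oplus j}\psi)_{(i,\ell)}(x) = \sum_{\alpha \in (\Z/2)^k} (-1)^{\langle i,\alpha\rangle} \psi_{(\alpha,\ell)}(x)$ and reindexing by $\alpha' = \alpha + e_q$, the identity from the previous step yields
\begin{equation*}
(U^{\oplus j}\psi)_{(i,\ell)}(\ldots,-x_q,\ldots) = (-1)^{\langle i,e_q\rangle}(U^{\oplus j}\psi)_{(i,\ell)}(\ldots,x_q,\ldots) = (-1)^{i_q}(U^{\oplus j}\psi)_{(i,\ell)}(\ldots,x_q,\ldots),
\end{equation*}
so $U^{\oplus j}\circ\psi$ is antipodal in its $(i,\ell)$-component with respect to the $q$-th ball with antipodality exactly $i_q$, independent of the mass index $\ell$.

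Finally, $\pi$ is the coordinate projection that deletes exactly the components with $i = (0,\ldots,0)$ (on which $U^{\oplus j}\psi$ is identically $1$, since $\sum_{\alpha} \psi_{(\alpha,\ell)}(x) = \mu_\ell(\R^d) = 1$), so it preserves the antipodality of every retained component. The antipodality matrix $A$ of $\Phi$ therefore has one row for each pair $(i,\ell)$ with $i \in (\Z/2)^k\setminus\{0\}$ and $\ell \in \{1,\ldots,j\}$, and that row is precisely the vector $i \in \{0,1\}^k$; hence every nonzero $0/1$-vector of length $k$ occurs as a row of $A$ exactly $j$ times, as claimed (and including the $i=0$ rows before applying $\pi$ simply adds the zero vector $j$ times). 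The only subtle point in this argument is the orientation identity $H^{\alpha}(-x_q) = H^{\alpha+1}(x_q)$; once that is in place, the remainder is a routine application of the Hadamard involution.
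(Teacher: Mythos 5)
Your proof is correct. The paper itself states this proposition without proof (it is quoted from Ramos as part of the review of his setup), and your verification --- the orientation identity $H^{\alpha}(-x_q)=H^{\alpha+1}(x_q)$ from uniqueness of the bisecting hyperplane, the resulting index translation by $e_q$ on the components of $\psi$, and its diagonalization by the Hadamard matrix $U$ into the sign $(-1)^{i_q}$ --- is exactly the standard argument one would supply, including the observation that $\pi$ only deletes the constant components with $i=(0,\dots,0)$.
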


Ramos's method of proof is to show that the parity of the number of zeros of the test map $\Phi$ on the given domain is odd and hence the map has at least one zero.  In \cite[Thm.\,4.6]{ramos1996equipartition} he shows that if the permament of a certain matrix is odd, then the parity of the number of zeros of~$\Phi$ is also odd. However, this permanent is odd in only a few cases and in particular in none of the cases listed in Table~\ref{fig_ramos_upper_bounds}. To prove \cite[Lem.\,6.2]{ramos1996equipartition} and obtain the results in Table~\ref{fig_ramos_upper_bounds}, Ramos restricts the configuration space a second time with the goal of obtaining more cases where the matrix permanent is odd. Instead of a product of balls, he uses a subspace of a product of balls: For $p,q \geq 1$ define
\[
(B^{p})^{q}_\le = \{(x_1,\dots,x_{t}) \in (\R^{p})^q \; \colon \; \|x_1\| \le \|x_2\| \le \dots \le \|x_{q}\| \le 1 \} \subseteq (B^{p})^{q}.
\]
The space $(B^{p})^{q}_\le$ is a fundamental domain for the action of the symmetric group $\Sym_q$ on $(B^{p})^{q}$ given by permuting copies. \cite[Lem.\,6.2]{ramos1996equipartition} is a result that relates the parity of the number of zeros of the test map $\Phi$ on $(B^{p})^{q}_\le$ to the parity of the number of zeros of $\Phi$ on the boundary of $(B^{p})^{q}_\le$. 
Ramos parametrizes the boundary as follows: For $1 \le m < n \le q$, define sets
\begin{align*}
C_{m,n} &= \{(x_1, \dots, x_q) \in (B^p)^q_\le \; : \; \|x_m\| = \|x_n\| \},  \\
C_{q,q+1} &= \{(x_1, \dots, x_q) \in (B^p)^q_\le \; : \; \|x_q\|  = 1 \}.
\end{align*}
Here $C_{q,q+1}$ can be regarded as the ``lid'' of $(B^p)^q_\le$, where the ``top lid'' $X_{q,q+1}^+ = X_{q,q+1} \cap \{x_q \geq 0\}$ and the ``bottom lid'' $X_{q,q+1}^- = X_{q,q+1} \cap \{x_q \le 0\}$ are homeomorphic to $(B^p)^{q-1}_\le \times B^{p-1}$. Hence 
\[
\mathrm{bd}(B^p)^q_\le \; \; = \biguplus\limits_{1\le m \le q} C_{m,m+1}  \; ,
\]
where ``$\biguplus$'' denotes the union of sets whose relative interiors are disjoint. On the sets $C_{m,n}$, Ramos defines a permutation action given by
\begin{align*}
\beta_{mn} \colon C_{m,n}  &\longrightarrow C_{m,n}\\
(x_1, \dots, x_m, \dots, x_n, \dots, x_q) &\longmapsto (x_1, \dots, x_n, \dots, x_m, \dots, x_q).
\end{align*}
Notice that points in the subsets $\{ x \in (B^{p})^{q}_\le \; :  \; x_m = x_n \} \subset C_{m,n}$ are fixed by this action. Hence the action is not fixed point free.

For the proofs, Ramos switches to a piecewise-linear (PL) approximation of the test map that maps the simplices of a ``symmetric'' triangulation of $(B^{p})^{q}_\le$ into general position with respect to the origin. See the following definition for these notions.

\begin{definition}[{\cite[p.\,149]{ramos1996equipartition}}]
If $T$ is a pseudomanifold, then we call a map $r \colon \|T\| \longrightarrow \R^n$ \emph{piecewise linear} if it is affine on every simplex of $T$. We call $r$ \emph{non-degenerate} if given any $m$-simplex $\sigma \in T$, any $m$ component functions of $r$ have at most one common zero on $\sigma$ and any common zero lies in the relative interior of $\sigma$. We will say that $r$ is \emph{NDPL} if $r$ is both non-degenerate and piecewise linear. 
\end{definition}

The test map or its NDPL approximation is again required to be ``equivariant'' in some sense. This is made precise in the following definition.

\begin{definition}[{\cite[p.\,162]{ramos1996equipartition}}]
Given a map $r=(r',r'')\colon (B^p)^q_\le \longrightarrow \R^{pq}$, where $r'$ denotes the first $pq-1$ components of $r$ and $r''$ the last component, we call $r$ \emph{symmetric for the zeros in the boundary} if for all $1 \le m < n \le q$ and all $x \in C_{m,n}$ the following implication holds:
\[
r'(x) = 0 \; \text{ implies that } \; r'(\beta_{mn} (x)) = 0 \text{ and } r''(x) = r''(\beta_{mn}(x)).
\]
\end{definition}

\begin{lemma}[{\cite[Lem.\,6.2]{ramos1996equipartition}}] \label{lem_lem6_2}
Let $r=(r',r'') \colon (B^p)^q_\le \longrightarrow \R^{pq}$ be a map where $r'$ denotes the first $pq-1$ components and $r''$ the last component. Suppose $r$ is NDPL and symmetric for the zeros in the boundary. Let $r$ be antipodal in the last component with respect to the $q$-th ball and let $a=a_{pq,q} \in \{0,1\}$ be its antipodality. If $P(r',r''; (B^p)^q_\le)$ denotes the parity of the number of zeros of $r$ in $(B^p)^q_\le$ and $P(r'; (B^p)^{q-1}_\le \times B^{p-1})$ denotes the parity of the number of zeros of $r'$ in $X_{q,q+1}^+ \approx (B^p)^{q-1}_\le \times B^{p-1}$, the ``top lid'' of the boundary of $(B^p)^q_\le$, then we have the following equality:
\[
P(r',r''; (B^p)^q_\le) = a \cdot P(r'; (B^p)^{q-1}_\le \times B^{p-1}).
\]
\end{lemma}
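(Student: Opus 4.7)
The plan is to read $P(r',r''; (B^p)^q_\le)$ as a mod-$2$ intersection count on the zero locus of $r'$, and then to localise that count on the boundary $\partial D$ of $D := (B^p)^q_\le$ using the two symmetry hypotheses. Because $r$ is NDPL, the set $Z' := (r')^{-1}(0)$ is a piecewise linear $1$-manifold properly embedded in $D$ whose boundary $\partial Z'$ is a finite subset of $\partial D$, and the zeros of $r$ in $D$ are precisely the points of $Z'$ at which $r''$ vanishes (none lie on $\partial D$ by a dimension count). Restricted to each arc of $Z'$, the real function $r''$ changes sign an odd number of times exactly when its two endpoints carry opposite signs, and an even number of times on each circle component, so
\[
P(r',r''; D) \ \equiv\ N^+ \pmod{2}, \qquad N^+ := \big|\{x \in \partial Z' : r''(x) > 0\}\big|.
\]

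Next I would split $\partial Z'$ according to the decomposition $\partial D = \bigcup_{m=1}^{q} C_{m,m+1}$ and treat each face separately. For $m < q$ the involution $\beta_{m,m+1}$ acts on $C_{m,m+1}$, and the hypothesis ``symmetric for the zeros in the boundary'' says exactly that $\beta_{m,m+1}$ preserves $Z' \cap C_{m,m+1}$ and preserves the value of $r''$ on it. If this action on $\partial Z' \cap C_{m,m+1}$ is free, the endpoints pair up with equal $r''$-sign, and $|\partial Z' \cap C_{m,m+1} \cap \{r'' > 0\}|$ is even. Summing over $m < q$ would reduce the bookkeeping to $N^+ \equiv N^+_{q,q+1} \pmod{2}$.

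For the final face $C_{q,q+1}$ I would use the antipodal involution $\sigma\colon x_q \mapsto -x_q$, which swaps the top and bottom lids $X^\pm_{q,q+1}$ and, since $r$ is antipodal in every component with respect to the $q$-th ball, pairs endpoints of $Z'$ on the two lids while multiplying $r''$ by $(-1)^a$. When $a = 0$ the paired endpoints share their $r''$-sign, so $N^+_{q,q+1}$ is even and $P(r',r''; D) \equiv 0 \equiv a \cdot P(r'; (B^p)^{q-1}_\le \times B^{p-1}) \pmod 2$. When $a = 1$ the paired endpoints have opposite $r''$-signs, so each pair contributes exactly one member to $N^+_{q,q+1}$; thus $N^+_{q,q+1}$ equals the total number of endpoints of $Z'$ on the top lid, which is the number of zeros of $r'$ on $X^+_{q,q+1} \cong (B^p)^{q-1}_\le \times B^{p-1}$. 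The two cases together yield the identity of the lemma.

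The hard part, and the one place this argument is genuinely delicate, is the freeness assumption used in the $\beta_{m,m+1}$ step. The fixed locus of $\beta_{m,m+1}$ inside $C_{m,m+1}$ is $\{x_m = x_{m+1}\}$, of codimension $p$; for small $p$, and in particular $p = 1$, it is of a dimension comparable to $\partial Z' \cap C_{m,m+1}$ and cannot be dislodged by generic perturbation of $r$. An endpoint of $Z'$ lying at such a fixed point is self-paired by $\beta_{m,m+1}$ and contributes an uncancelled $\pm 1$ to $N^+_{m,m+1}$, breaking the reduction $N^+ \equiv N^+_{q,q+1}$. Unless this fixed-point contribution is independently ruled out or computed, the mod-$2$ bookkeeping collapses; this is the obstruction that any honest proof of the lemma must confront.
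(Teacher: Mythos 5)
You have reconstructed what is essentially Ramos's own argument, and your final paragraph puts your finger on exactly the fatal point --- but you should draw the stronger conclusion: the lemma is \emph{false}, and the paper states it only in order to refute it. The item immediately following the lemma in the text, Example~\ref{ex_counterexample_lemma_6_2}, constructs for $p=1$, $q=3$ an explicit NDPL map $r=(r',r'')$ on $(B^1)^3_\le$ that is symmetric for the zeros in the boundary and antipodal in the last component with respect to the third ball with antipodality $a=0$, yet has exactly one zero in the domain, so that $P(r',r'';(B^1)^3_\le)=1\neq 0=a\cdot P(r';(B^1)^{2}_\le\times B^{0})$. No proof can therefore exist, and the question is only where the argument breaks.

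It breaks precisely where you said it does. The reduction $N^+\equiv N^+_{q,q+1}\pmod 2$ needs the involution $\beta_{m,m+1}$ to act freely on $\partial Z'\cap C_{m,m+1}$, but its fixed locus $\{x\in(B^p)^q_\le : x_m=x_{m+1}\}\subset C_{m,m+1}$ is nonempty; for $p=1$ it is a full-dimensional subpolytope of the face (half of $C_{m,m+1}$), so endpoints of $Z'$ lying on it cannot be perturbed away, and the hypothesis ``symmetric for the zeros in the boundary'' says nothing at such points because $\beta_{m,m+1}$ fixes them. Each such self-paired endpoint contributes an uncancelled unit to $N^+$. The counterexample is engineered so that the whole contribution sits on the fixed loci $F_{x,y}\subset C_{1,2}$ and $F_{y,z}\subset C_{2,3}$. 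So your closing sentence --- that the fixed-point contribution must be ``independently ruled out or computed'' --- is the right diagnosis stated too cautiously: it cannot be ruled out, the pairing argument collapses, and with it Ramos's Lemma~6.2, his Theorem~6.3, and all the upper bounds in Table~\ref{fig_ramos_upper_bounds}. That is the entire point of the section in which this statement appears.
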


\begin{example}[Counterexample to {\cite[Lem.\,6.2]{ramos1996equipartition}}] 
\label{ex_counterexample_lemma_6_2} 
\normalfont
This example exploits the simple fact that the permutation action on the coordinates in $C_{m,n}$ has fixed points, a fact that Ramos does not account for in his proof of \cite[Lem.\,6.2]{ramos1996equipartition}.
Let $p =1$ and $q =3$. Then
\[
(B^p)^q_\le = (B^1)^3_\le = \{ (x,y,z) \in \R^3 : |x| \le |y| \le |z| \le 1\}.
\]
See Figures \ref{fig_pic_counterexample_view_one} and \ref{fig_pic_counterexample_view_two} for a visualization of $(B^1)^3_\le$. Define the following sets and color them as in the Figures:
\begin{align*}
F_{x,y} &= \{(x,y,z) \in (B^1)^3_\le \; : \; x = y \} \subset C_{1,2}, \quad \,\,\text{~~~~~~``red''}  \\
F_{y,z} &= \{(x,y,z) \in (B^1)^3_\le \; : \; y = z \} \subset C_{2,3}, \quad \,\,\text{~~~~~~``blue''} \\
F_{x,z} &= \{(x,y,z) \in (B^1)^3_\le \; : \; x = z \} \subset C_{1,3}, \quad \,\,\text{~~~~~~``green''} \\
\mathrm{Top} &= \{(x,y,z) \in (B^1)^3_\le \; : \;  z = 1 \} \subset C_{3,4},\quad \,\,\text{~~~~~~``black''}\\
\mathrm{Bot}  &= \{(x,y,z) \in (B^1)^3_\le \; : \;  z = -1 \} \subset C_{3,4}. \quad\text{``black''}
\end{align*}

We will now construct a map $r=(r',r'')\colon (B^1)^3_\le \longrightarrow \R^{3}$  that contradicts \cite[Lem.\,6.2]{ramos1996equipartition}.
\begin{compactenum}[\normalfont (i)]
\item Rotate $(B^1)^3_\le$ by $90^\circ$ to the right along the $y$-axis. Now $\mathrm{Top}$ and $\mathrm{Bot}$ lie in the two parallel hyperplanes $\{ x=1 \}$ and $\{x =-1\}$.
\item Rotate $(B^1)^3_\le$ along the $x$-axis and translate it such that the $z$-axis runs through $F_{x,y,1}$ and $F_{y,z,1}$ and the origin lies in the interior of the tetrahedron that has $F_{x,y,1}$ and $F_{y,z,1}$ as two of its faces. See Figure \ref{fig_pic_counterexample_view_three}.
\end{compactenum}
\end{example}

Since $r''(x,y,-z) = r''(x,y,z) = (-1)^0 r''(x,y,z)$, the map $r = (r', r'')$ is antipodal in the last component with respect to the third ball with antipodality $a=0$. It is easy to check that $r$ is non-degenerate. Moreover, $r$ has exactly one zero in $(B^1)^3_\le$. Hence
\[
P(r',r''; (B^1)^3_\le) = 1 \neq 0 = 0 \cdot P(r'; (B^1)^{2}_\le \times B^{0}).
\]

\begin{figure}[!ht] 
\begin{subfigure}[b]{0.48\textwidth}
\includegraphics[width=1\textwidth]{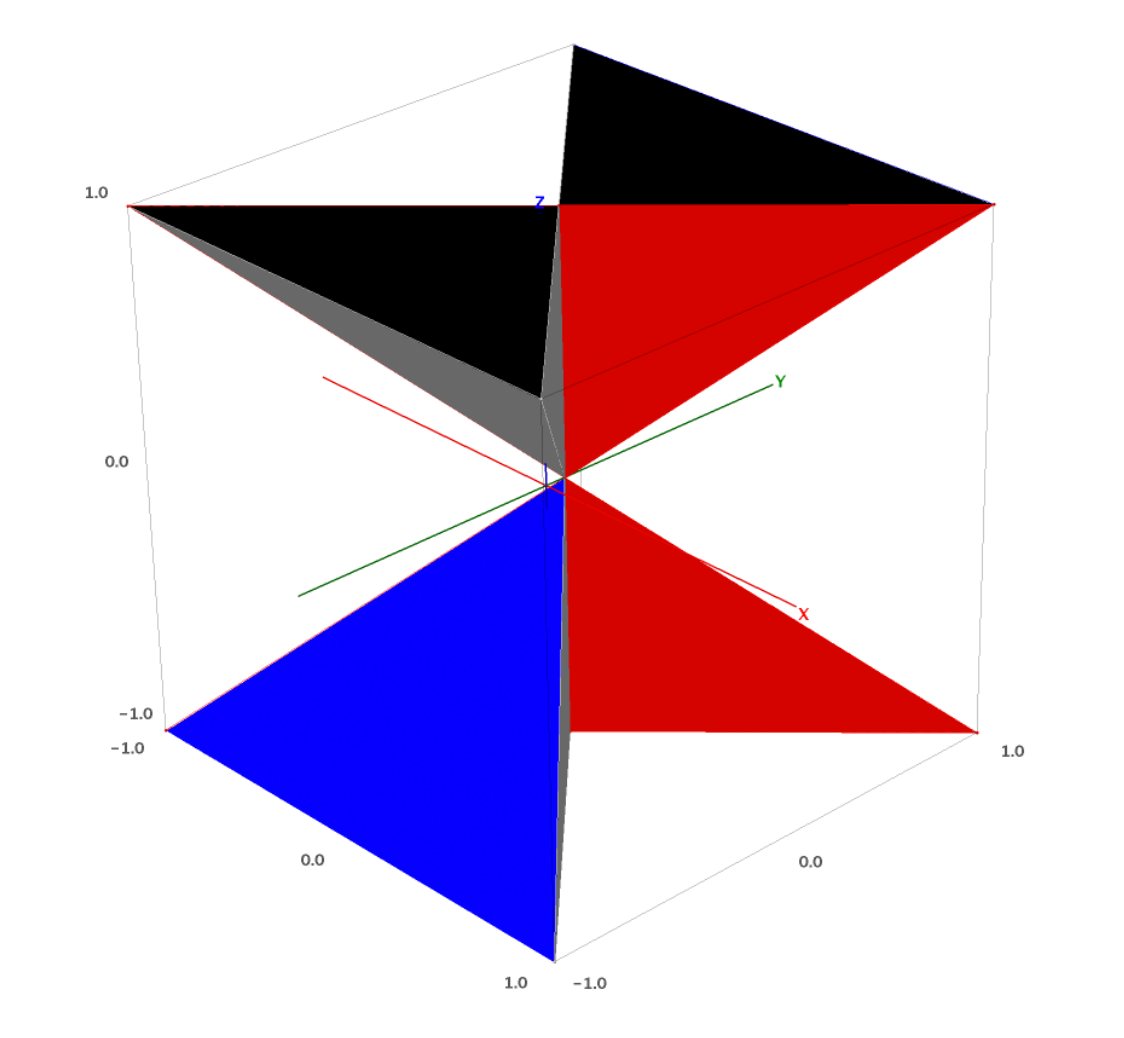}
\caption{One view of $(B^1)^3_\le$} \label{fig_pic_counterexample_view_one}
\end{subfigure}
\begin{subfigure}[b]{0.48\textwidth}
\includegraphics[width=1\textwidth]{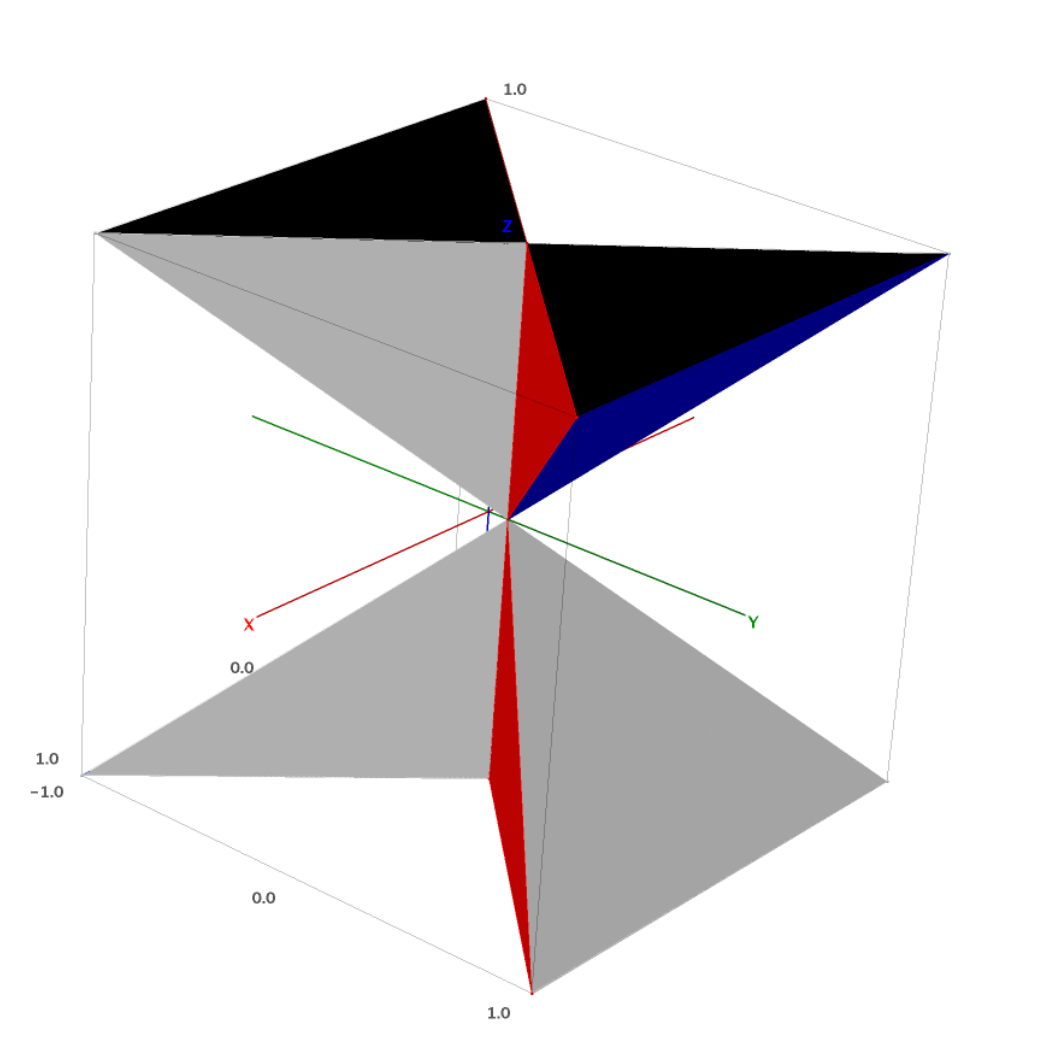}
\caption{Another view of $(B^1)^3_\le$}\label{fig_pic_counterexample_view_two}
\end{subfigure}
\begin{subfigure}[b]{0.48\textwidth}
\includegraphics[width=0.9\textwidth]{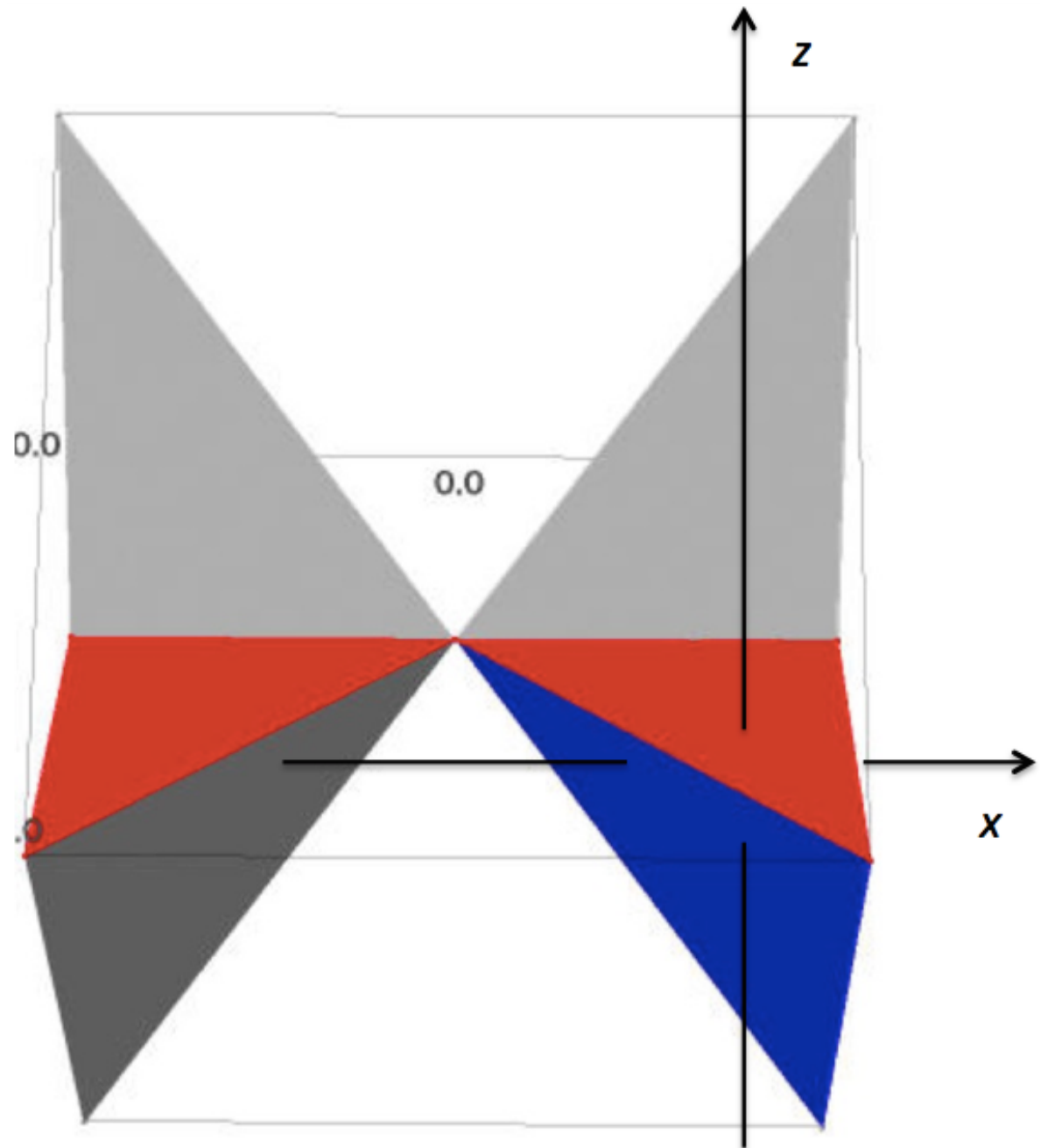}
\caption{The $r$ image of $(B^1)^3_\le$}\label{fig_pic_counterexample_view_three}
\end{subfigure}
\caption{}\label{fig_pic_counterexample}
\end{figure}

\section{Further gaps in the literature}
\label{sec:further_gaps}

In this section we explain essential gaps in proofs of the main results in the papers of Mani-Levitska et al.~\cite{mani2006} and \v{Z}ivaljevi\'c~\cite{zivaljevic2011equipartitions}.

\subsection{Gaps in~\cite{mani2006}}
Mani-Levitska et al.\ in their 2006 paper \cite{mani2006} studied the Ramos conjecture in the case of two hyperplanes, $k=2$.
One of the main result of this paper \cite[Thm.\,4]{mani2006}
was a criterion under which for special values
of $m$, in particular for $m=1$, one would get $\Delta(4m+1,2)\le 6m+2$.

To get this criterion, they used
the product configuration space/test map scheme and applied the 
equivariant obstruction theory of tom Dieck \cite[Sec.\,II.3]{tom1987transformation} 
in order to study the non-existence of $D_8$-equivariant maps 
$S^d\times S^d\longrightarrow S(U_2^{\oplus j})$.
Indeed, in the beginning of \cite[Sec.\,2.3.3]{mani2006} the authors recall details on the equivariant obstruction theory they apply as well as about the first isomorphism that will be used in the identification of the obstruction element:

\begin{quote}
{\small
\cite[\bf Section\,2.3.3]{mani2006} \newline Once a problem is reduced to the question of (non) existence of equivariant map, one can use some standard topological tools for its solution. For example, one can use the cohomological index theory for this purpose [14,17,45,47]. This approach is discussed in Section 4.1. In this paper our main tool is elementary equivariant obstruction theory [13], refined by some basic equivariant bordism, and group homology calculations.

Suppose that $M^n$ is orientable, $n$-dimensional, {\bf free} $G$-manifold and that $V$ is a $m$-dimensional, real representation of $G$. Then the first obstruction for the existence of an equivariant map $f\colon M \longrightarrow S(V)$, is a cohomology class
\[
\omega\in H^{m}_G\big(M,\pi_{m-1}\big(S(V)\big)\big)
\]
in the appropriate equivariant cohomology group [13, Section II.3], where $\pi_k(S(V))$ is seen as a $G$-module. 
The action of $G$ on $M$ induces a $G$-module structure on the group $H_n(M,\Z)\cong\Z$ which is denoted by $\mathcal{O}$. 
The associated homomorphism $\theta\colon G\longrightarrow\{-1,+1\}$
is called the orientation character. Let $A$ be a (left) $G$-module. 
The Poincaré duality for equivariant (co)homology is the following isomorphism [39],
\[
H^k_G(M,A)\overset{D}{\longrightarrow} H^G_{n-k}(M,A\otimes\mathcal{O}).
\]}
\end{quote}

\noindent
(Boldface added for emphasis.)
In \cite[Sec.\,2.6]{mani2006} they present further isomorphisms that will be used in the identification of the obstruction element:

\begin{quote}
{\small
\cite[\bf Section\,2.6]{mani2006}\newline
By equivariant Poincar\'e duality, Section 2.3.3, the dual $D(\omega)$ of the first obstruction cohomology class $\omega\in H^m_G(M,\pi_{m-1}S(V))$ lies in the equivariant homology group $H^G_{n-m}(M,\pi_{m-1}S(V)\otimes\mathcal{Z})$.
If $M$ is $(n-m)$-connected, then there is an isomorphism [11, Theorem II.5.2]
\[
H^G_{n-m}(M,\pi_{m-1}S(V)\otimes\mathcal{Z}) \overset{\cong}{\longrightarrow} H_{n-m}(G,\pi_{m-1}S(V)\otimes\mathcal{Z}).
\]
This allows us to interpret $D(\omega)$ as an element in the latter group. 
Moreover, if the coefficient $G$-module $\pi_{m-1}S(V)\otimes\mathcal{Z}$ is trivial, then the homology group $H_{n-m}(G,\Z)\cong H_{n-m}(BG,\Z)$ is for $n-m\leq 3$ isomorphic to the oriented $G$-bordism group $\Omega_{n-m}(G)\cong\Omega_{n-m}(BG)$, i.e. to the groups based on free, oriented $G$-manifolds [12].

Our objective is to identify the relevant obstruction classes. 
Already the algebraically trivial case $H_0(G,M)\cong M_G$, where $M_G = \Z\otimes M$ is the group of coinvariants, may be combinatorially sufficiently interesting. 
Indeed, the parity count formulas applied in [32], see also [49, Section 14.3], may be seen as an instance of the case $M_G\cong\Z/2$.

However, the most interesting examples explored in this paper involve the identification of $1$-dimensional obstruction classes. 
Since these classes in practice usually arise as the fundamental classes of zero set manifolds, our first choice will be the bordism group $\Omega_1(G)$.
}
\end{quote}

After presenting the method used in the paper \cite{mani2006} for the study of the non-existence of $D_8$-equivariant maps $S^d\times S^d\longrightarrow S(U_2^{\oplus j})$ we can point out the gap.
For the method to work the action of the group (in this case $D_8$) on the manifold (in this case $S^d\times S^d$) has to be free.
The action of $D_8$ on $S^d\times S^d$ is {\bf not free} and therefore the method can not be applied to the problem of the non-existence of $D_8$-equivariant maps $S^d\times S^d\longrightarrow S(U_2^{\oplus j})$.
Consequently, all the claims by Mani-Levitska et al. derived from the application of this method --- namely \cite[Thm.\,4,\,Prop.\,25,\,Thm.\,33,\,Cor.\,37]{mani2006} --- are not proven. 
Furthermore, we point out that 
\begin{compactitem}[~~$\bullet$]
\item the Poincar\'e duality isomorphism 
$H^k_G(M,A)\overset{D}{\longrightarrow} H^G_{n-k}(M,A\otimes\mathcal{O})$
stands only with the assumption that $M$ is an oriented compact manifold with a free $G$-action; a complete proof can be found in \cite[Thm.\,1.4]{dimitrijevic},
\item the isomorphism $H^G_{n-m}(M,A) \overset{\cong}{\longrightarrow} H_{n-m}(G,A)$ holds for a trivial $G$-module $A$ when $M$ is an $(n-m)$-connected  space on which the $G$-action is free.
\end{compactitem}

\smallskip\noindent
Finally, let us mention that already in 1998 \v{Z}ivaljevi\'c \cite[Proof of Prop.\,4.9]{zivaljevic1998users-guide-2} has given a suggestion how to 
deal with the presence of non-free actions in the context of equivariant obstruction theory  applied to the Ramos conjecture:
There he studied the non-existence of a $(\Z/2\oplus D_8)$-equivariant map $(S^3)^3\longrightarrow S(\R^9)$ with non-free action on the domain
using relative equivariant obstruction theory.  

\subsection{A gap in \cite{zivaljevic2011equipartitions}~\cite{zivaljevic2011equipartitions-published}}
In his 2011 paper \cite{zivaljevic2011equipartitions} that was published in April 2015 \cite{zivaljevic2011equipartitions-published}, \v{Z}ivaljevi\'c  studied the Ramos conjecture in the case of two hyperplanes, $k=2$.
The main result \cite[Thm.\,2.1]{zivaljevic2011equipartitions}~\cite[Thm.\,2.1]{zivaljevic2011equipartitions-published} claims that $\Delta(4\cdot 2^k+1,2)=6\cdot 2^k+2$.
For this claim we gave a degree-based proof, see Theorem~\ref{thm:Delta(2^t+1,2)}. 

In order to study the non-existence of $D_8$-equivariant maps induced by the product configuration scheme $S^d\times S^d\longrightarrow S(U_2^{\oplus j})$ \v{Z}ivaljevi\'c in \cite[Sec.\,12]{zivaljevic2011equipartitions}~\cite[App.\,B.2]{zivaljevic2011equipartitions-published} introduces an 
``algebraic equivariant obstruction theory.''
We explain why the proofs for \cite[Thms.\,2.1 and 2.3]{zivaljevic2011equipartitions}~\cite[Thms.\,2.1 and 2.2]{zivaljevic2011equipartitions-published}
using this obstruction theory are not complete, as they fail to validate essential
preconditions that are not automatically provided by this theory.

Following \cite[Sec.\,12]{zivaljevic2011equipartitions}~\cite[App.\,B.2]{zivaljevic2011equipartitions-published}, suppose that $X$ is a $d$-dimensional $G$-space with admissible filtration \cite[Def.\,12.5]{zivaljevic2011equipartitions}~\cite[Def.\,B.3]{zivaljevic2011equipartitions-published}:
\[
\emptyset=X_{-1}\subset X_0\subset X_1\subset \cdots\subset X_{n-1}\subset X_n\subset X_{n+1}\subset \cdots\subset X_d=X.
\]
Furthermore, let $Y$ be a $G$-CW-complex with associated filtration by skeleta:
\[
\emptyset=Y_{-1}\subset Y_0\subset Y_1\subset \cdots\subset Y_{n-1}\subset Y_n\subset Y_{n+1}\subset \cdots\subset Y_{\nu}=Y.
\]
Then, according to \cite[Prop.\,12.11]{zivaljevic2011equipartitions}~\cite[Prop.\,B.6]{zivaljevic2011equipartitions-published}, if we assume that there exists a $G$-equivariant map $f\colon X\longrightarrow Y$, then there exists a chain map 
\[
f_*\colon H_n(X_n,X_{n-1};\Z)\longrightarrow H_n(Y_n,Y_{n-1};\Z)
\]
between the associated augmented chain complexes of $\Z[G]$-modules:
\[
\xymatrix@!C=1.8em{
\ldots\ar[r]^{\partial} &C_{n+1}\ar[r]^{\partial}\ar[d]^{f_{n+1}}&C_n\ar[r]^{\partial}\ar[d]^{f_n}&C_{n-1}\ar[r]^{\partial}\ar[d]^{f_{n-1}} &\ldots\ar[r]^{\partial}&C_1\ar[r]^{\partial}\ar[d]^{f_1}&C_0\ar[r]^{\partial}\ar[d]^{f_0}&\Z\ar[r]\ar[d]^{=}&0\\
\ldots\ar[r]^{\partial} &D_{n+1}\ar[r]^{\partial}&D_n\ar[r]^{\partial}&D_{n-1}\ar[r]^{\partial} &\ldots\ar[r]^{\partial}&D_1\ar[r]^{\partial}&D_0\ar[r]^{\partial}&\Z\ar[r]&0
}
\]
where $C_n=H_n(X_n,X_{n-1};\Z)$ and $D_n=H_n(Y_n,Y_{n-1};\Z)$ for every $n$.

\noindent
Now \cite[Sec.\,12.3]{zivaljevic2011equipartitions}~\cite[App.\,B.3]{zivaljevic2011equipartitions-published} studies  the existence of chain maps between chain complexes of $\Z[G]$-modules. 
\cite[Prop.\,12.13]{zivaljevic2011equipartitions}~\cite[Prop.\,B.7]{zivaljevic2011equipartitions-published} introduces an obstruction theory as follows: 
For $n+1\leq d$,
\begin{compactitem}[~~$\bullet$]
\item a finite chain complexes of $\Z[G]$-modules $C_*=\{C_k\}_{k=-1}^d$ and $D_*=\{D_k\}_{k=-1}^d$, with $C_{-1}=D_{-1}=\Z$, and
\item a fixed partial chain map $F_{n-1}=(f_j)_{j=-1}^{n-1}\colon \{C_k\}_{k=-1}^{n-1}\longrightarrow\{D_k\}_{k=-1}^{n-1}$,
\end{compactitem}
assume that $F_{n-1}$ can be extended to dimension $n$, i.e., there exists $f_n\colon C_n\longrightarrow D_n$ such that $\partial f_n=f_{n-1}\partial$. 
Then \cite[(12.14)]{zivaljevic2011equipartitions}~\cite[(B.7)]{zivaljevic2011equipartitions-published} defines the obstruction to the existence of a partial chain map 
\[
F_{n+1}=(f_j)_{j=-1}^{n+1}\colon \{C_k\}_{k=-1}^{n+1}\longrightarrow\{D_k\}_{k=-1}^{n+1},
\]
which extends the partial chain map $F_{n-1}$, with a possible modification of $f_n$, as an appropriate element $\theta$ of the cohomology group:
\[
H^{n+1}(C_*;H_n(D_*))=H_{n+1}(\mathrm{Hom}(C_*,H_n(D_*)).
\]
The element $\theta$ is represented by the cocycle \cite[(12.15)]{zivaljevic2011equipartitions}~\cite[(B.8)]{zivaljevic2011equipartitions-published}:
\[
\theta(f_n)\colon C_{n+1}\overset{\partial}{\longrightarrow}C_n\overset{f_n}{\longrightarrow}Z_n(D_*)\overset{\pi}{\longrightarrow}H_n(D_*).
\]
Now \cite[Prop.\,12.13]{zivaljevic2011equipartitions}~\cite[Prop.\,B.7]{zivaljevic2011equipartitions-published} states that vanishing of $\theta$ is not only necessary but also sufficient for the existence of the extension $F_{n+1}$ if $C_n$ and $C_{n+1}$ are projective modules.

The obstruction $\theta$ highly depends on the partial chain map $F_{n-1}=(f_j)_{j=-1}^{n-1}$.
The first paragraph of \cite[Sec.\,12.4]{zivaljevic2011equipartitions}~\cite[Sec.\,B.4]{zivaljevic2011equipartitions-published} comments on this issue  as follows:

\begin{quote}
{\small
\cite[\bf 12.4., B.4., Heuristics for evaluating the obstruction $\theta$.]{zivaljevic2011equipartitions,zivaljevic2011equipartitions-published}\newline
In many cases the chain map $F_{n-1}=(f_j)_{j=-1}^{n-1}$, which in Proposition 12.13 serves as an input for calculating
the obstruction $\theta$, is unique up to a chain homotopy. 
This happens for example when $D_*$ is a chain complex associated to a $G$-sphere $Y$ of dimension $n$.
}
\end{quote}

The last sentence is not true:
In order to guarantee that the input partial chain map $F_{n-1}=(f_j)_{j=-1}^{n-1}$ is unique up to a chain homotopy an additional condition on the chain complex $C_*$ needs to be fulfilled, for example that $\{C_k\}_{k=-1}^{n-1}$ is a sequence of projective $\Z[G]$-modules. 

The algebraic obstruction theory just described is applied in \cite{zivaljevic2011equipartitions}~\cite{zivaljevic2011equipartitions-published}  to the problem of the non-existence of a $D_8$-equivariant map 
$S^d\times S^d\longrightarrow S(U_2^{\oplus j})$:
\begin{compactitem}[~~$\bullet$]
\item in \cite[Sec.\,6]{zivaljevic2011equipartitions}~\cite[Sec.\,6]{zivaljevic2011equipartitions-published} an admissible filtration of $S^d\times S^d$ is defined,
\item in \cite[Sec.\,7]{zivaljevic2011equipartitions}~\cite[Sec.\,7]{zivaljevic2011equipartitions-published} the top three levels of the associated chain complex $C_*$ of $S^d\times S^d$ are described as projective $\Z[D_8]$-modules,
\item in \cite[Prop.\,9.21]{zivaljevic2011equipartitions}~\cite[Prop.\,9.9]{zivaljevic2011equipartitions-published} evaluates the obstruction $\theta$ for particular input data $F_{2d-2}=(f_j)_{j=-1}^{2n-2}$ proving that it does not vanish.
\end{compactitem}
Since the $D_8$-action on $S^d\times S^d$ is not free the chain complex $C_*$ of $\Z[D_8]$-modules associated to $S^d\times S^d$ is not guaranteed to be
a chain complex of projective $\Z[D_8]$-modules.
Thus different input data $F_{2d-2}=(f_j)_{j=-1}^{2n-2}$ need not define the same obstruction $\theta$ computed in \cite[Prop.\,9.21]{zivaljevic2011equipartitions}~\cite[Prop.\,9.9]{zivaljevic2011equipartitions-published}.  
Consequently, no conclusion about the non-existence of an extension of $F_{2d-1}$, and further of a $D_8$-equivariant map $S^d\times S^d\longrightarrow S(U_2^{\oplus j})$, can be obtained from computation of just one obstruction.   
This exhibits an essential gap in the proof of the main result \cite[Thm.\,2.1]{zivaljevic2011equipartitions}~\cite[Thm.\,2.1]{zivaljevic2011equipartitions-published} as well as a serious deficiency in the proposed algebraic obstruction theory.
 
\subsubsection*{Acknowledgements.}
We are grateful to John M. Sullivan for very good advice and useful comments.

\small

\end{document}